\DeclareRobustCommand{\authorthing}{Meike Hatzel\thanks{Meike Hatzel's research was supported by the Federal Ministry of Education and Research (BMBF) and by a fellowship within the IFI programme of the German Academic Exchange Service (DAAD).}
\and
Johannes Schröder
}
\author{\authorthing}
\newif\ifcomment
\newif\iflong
\newif\ifshort
\pgfplotsset{compat=1.18}
\newcounter{definitionCounter}
\newenvironment{enamerate}[4][]
{
	\begin{enumerate}[#1,noitemsep,topsep=0pt,label=\textbf{(#2\arabic*)},labelindent=0pt,labelwidth=\widthof{\ref{#3}},itemindent=#4%
		]%
	}
	{\end{enumerate}}
\newenvironment{enameratesmall}[4][]
{
	\begin{enumerate*}[#1,noitemsep,topsep=0pt,label=\textbf{(#2\arabic*)},labelindent=0pt,labelwidth=\widthof{\ref{#3}},itemindent=#4%
		]%
	}
	{\end{enumerate*}}
\newenvironment{customlegend}[1][]{%
    \begingroup
    \csname pgfplots@init@cleared@structures\endcsname
    \pgfplotsset{#1}%
}{%
    \csname pgfplots@createlegend\endcsname
    \endgroup
}%
\def\addlegendimage{\csname pgfplots@addlegendimage\endcsname}
\DeclareMathAlphabet{\pazocal}{OMS}{zplm}{m}{n}
\tikzstyle{para}=[rectangle,draw=black,minimum height=.8cm,fill=gray!10,rounded corners=1mm, on grid]
\pgfplotsset{
every legend to name picture/.style={west}
}
\def\expandafter\tikz@node@finish\expandafter{\expandafter\endgroup\expandafter\endpgfonlayer\tikz@node@finish}%
\renewcommand{\emptyset}{\varnothing}
\title{Computing $\Separations$-DAGs and Parity Games} 
\newcommand{\appsymb}{$\star$}
\newcommand{\appref}[1]{\ifshort{}[{\hyperref[proof:#1]{\appsymb}}]\fi{}}
\newcommand{\toappendix}[1]{%
    \iflong{}{#1}\else{}\gappto{\appendixProofText}{{#1}}\fi{}
}
\newcommand{\appendixproof}[2]{%
    \iflong{}{#2}\else\gappto{\appendixProofText}{\subsection{Proof of \cref{#1}}\label{proof:#1}#2}\fi{}
}
\newcommand{\appendixsection}[1]{%
    \iflong{}{}\else\gappto{\appendixProofText}{\section{Additional Material for \cref{#1}}}\fi{}
}
\begin{document}
	\maketitle
	
	\begin{textblock}{20}(-1.9,8.3)
        \includegraphics[width=80px]{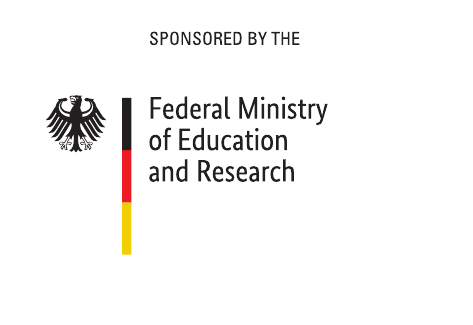}%
    \end{textblock}
	
	\begin{abstract}
        Treewidth on undirected graphs is known to have many algorithmic applications.
        When considering directed width-measures there are much less results on their deployment for algorithmic results.
        In 2022 the first author, Rabinovich and Wiederrecht introduced a new directed width measure, $\Separations{}$-DAG-width, using directed separations and obtained a structural duality for it.
        In 2012 Berwanger~et~al.~solved Parity Games in polynomial time on digraphs of bounded DAG-width.
        With generalising this result to digraphs of bounded $\Separations{}$-DAG-width and also providing an algorithm to compute the $\Separations{}$-DAG-width of a given digraphs we give first algorithmical results for this new parameter.
	\end{abstract}
\newpage
\section{Introduction}

Treewidth was popularized by Robertson and Seymour~\cite{robertson1984graph} and since then a lot of influential work has been grounded in the understanding of the characteristics of graphs based on their treewidth.
Treewidth has numerous applications both in structure theory as well as algorithmical, as it established itself as indispensable for structural graph theory throughout the graph minor project\cite{robertson1986graph} and beyond\cite{oporowski1993typical} as well as leading to algorithmic applications~\cite{arnborg1987complexity,arnborg1989linear} including even a powerful meta theorem discovered by Courcelle~\cite{courcelle1990monadic}.
Subsequently numerous concepts for decompositions of graphs and even of other structures have been considered~\cite{espelage2001solve,oum2005rank,kaminski2009recent,dallard2021treewidth,bonnet2021twin,merlin2021spined}.

Naturally, many attempts have been made to try to find equally useful parameters for directed graphs encompassing both similarly powerful algorithmic properties to treewidth as well as structural properties by establishing a duality with high connectivity areas in the graph\cite{robertson1986graph}.
These attempts include directed treewidth~\cite{reed1999introducing,johnson2001directed}, Kelly-width\cite{hunter2008digraph} and DAG-width\cite{berwanger2006dag}.
However none of these parameters have so far gotten the same level of traction as treewidth, partially because of their comparative shortcomings when it comes to algorithmic applications.
On the contrary, deciding all $\variablestyle{MSO}_1$ properties efficiently on digraphs under a bounded parameter, as has been demonstrated for treewidth~\cite{courcelle1990monadic}, can not be achieved by any parameter closed under directed topological minors and independent of (undirected) treewidth~\cite{ganian2016there}.
Still, this result leaves some space for possible algorithmic applicability for directed parameter and indeed some algorithmic results have been found for width parameters on directed graphs such as Hamiltonian path being solvable in polynomial time on graphs of bounded directed treewidth~\cite{johnson2001directed} and parity games being solvable in polynomial time on graphs of bounded DAG-width~\cite{berwanger2006dag}. 

Many of the known digraph parameters are related to some version of the cops and robber game and by extension to graph searching similar to treewidth.
The DAG-width of a graph in particular is equal to the number of cops for which there exists a monotone strategy to catch a mobile robber who can move along any path in the graph not occupied by a cop who does not switch positions, this is called the \emph{cops-and-robber-reachability-game}.
The number of cops needed to catch the robber without relying on monotonicity however can be significantly lower as shown in~\cite{kreutzer2011digraph} and it is an open problem to find a bound on the monotonicity costs for this cops and robber game on directed graphs.
However, recently a new graph decomposition including a corresponding width parameter for directed graphs has been proposed, namely $\Separations$-DAGs and $\Separations$-DAG-width~\cite{dissMeike}.
This parameter is more general than DAG-width and in particular displays a duality to non-monotone cop strategies in the cops and robber game, while also having a duality with a tangle analogue for directed graphs.
As the parameter is very new though, useful algorithmic properties of $\Separations$-DAG-width are yet to be discovered.

In this paper we consider the algorithmic applications of the parameter $\Separations$-DAG-width for the first time.
As $\Separations$-DAG-width lies between the two established directed width measures directed treewidth and DAG-width in the sense that every class of digraphs with bounded DAG-width has bounded $\Separations$-DAG-width and every class of digraphs with bounded $\Separations$-DAG-width has bounded directed treewidth,
the relevant problems to explore for $\Separations$-DAG-width are those, which cannot be solved efficiently for digraphs of bounded directed treewidth, but are not known to be hard on digraphs of bounded DAG-width.

To actually harness algorithmic power of $\Separations$-DAGs, we first need an algorithm to efficiently compute them.
We tackle this problem in~\cref{sec:ConstructSDAG} and prove the following.
\begin{restatable}{proposition}{compSDAG}
	\label{compSDAG}
    Given a a digraph $D$ of $\Separations$-DAG-width $k,$ an $\Separations$-DAG of minimum width for $D$ can be computed in $O(|V(D)|^{h(k)})$ for a linear function $h.$
\end{restatable}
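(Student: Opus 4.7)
The plan is to adapt the $\mathrm{XP}$-algorithm of Berwanger et al.~\cite{berwanger2006dag} for DAG-decompositions to the new setting of $\Separations$-DAGs, relying on the game-theoretic duality for $\Separations$-DAG-width established in~\cite{dissMeike}. That duality characterises $\Separations$-DAG-width at most $k$ through the existence of a (possibly non-monotone) winning strategy for $k+1$ cops in the appropriate cops-and-robber reachability game on $D$, which gives the usual combinatorial handle: to build an optimal decomposition it suffices to compute a winning cop strategy and then transcribe it into a decomposition.

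First, I would make the associated game explicit and bound its size. A position is determined by the current cop placement $X\subseteq V(D)$ together with the robber's reachable region in $D-X$; since $|X|\le k+1$, there are at most $|V(D)|^{k+1}$ placements and each induces at most $|V(D)|$ possible robber spaces, so the total number of positions is $|V(D)|^{O(k)}$. Second, I would compute the set of winning cop positions by a standard backward-induction / attractor fixed-point computation on this game graph. This computation is polynomial in the game graph's size and so runs in $|V(D)|^{O(k)}$ overall; from the fixed-point data I extract a positional winning strategy $\sigma$.

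Third, I would convert $\sigma$ into an $\Separations$-DAG $\mathcal{D}$: nodes of $\mathcal{D}$ correspond to positions visited under $\sigma$, the bag of a node is the cop placement at that position, and arcs together with the separations labelling them record the transitions prescribed by $\sigma$. The bound on cop numbers directly yields width at most $k$, and the direction of the duality in~\cite{dissMeike} ensures no decomposition of smaller width can exist, so $\mathcal{D}$ is of minimum width.

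The main obstacle is the third step. Because the game characterising $\Separations$-DAG-width is genuinely non-monotone, the naive "strategy DAG" read off from $\sigma$ can revisit positions and a priori contain cycles or have super-polynomially many distinct plays, both of which are incompatible with the definition of an $\Separations$-DAG. To turn $\sigma$ into a valid, polynomial-size structure I would use the structural properties of $\Separations$-DAGs proved in~\cite{dissMeike} to quotient positions by a suitable equivalence and to topologically order the resulting classes, while verifying that the required directed separation conditions between parent and child bags survive the quotient. Doing this carefully is where the bulk of the work lies, and where the linear dependence of $h$ on $k$ in the exponent is established; the remaining bookkeeping (constructing arcs, labelling them with separations, and certifying minimality) is routine once the state space and strategy extraction are in place.
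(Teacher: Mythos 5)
Your high-level outline (a cops-and-robber characterisation, backward induction over an $|V(D)|^{O(k)}$ game graph, then transcription of a winning strategy into a decomposition) matches the paper's outline, but the step you defer to ``quotienting positions by a suitable equivalence'' is precisely where the paper's actual work lies, and your proposed route does not supply it. The paper does \emph{not} run the standard non-monotone reachability game from~\cite{dissMeike}: it defines a new, purpose-built game (\cref{def:Scr_game}) in which a cop move must already exhibit one or two sets $S_1,S_2\in\Separators$ satisfying the blocking/uncrossing conditions \ref{item:C1:progressInReach}--\ref{item:C1:realProgress} and \ref{item:C2:containIntersection}--\ref{item:C2:separatorPartBelow}. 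These conditions force the robber's reach to shrink strictly, so no game state repeats (\cref{noStateTwice,finitePlay}) and the game graph is genuinely acyclic --- unlike the position graph of the standard game, where non-monotone play can revisit positions --- and they guarantee that the separations $\RangeSep{S}{X}$ read off from a winning strategy are laminar along plays, so the construction of \cref{theoStr->Dec} (using \cref{uncUp} together with auxiliary binary trees) produces a consistent $\Separations$-DAG whose bags are exactly the cop positions. Restricting to separations of the form $\RangeSep{S}{X}$ is also what keeps the state space at $|V(D)|^{O(k)}$; the set of all order-$k$ separations is exponentially large.

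In your version the bags are the cop placements of a positional winning strategy for the unconstrained game, and you acknowledge that the resulting ``strategy DAG'' may contain cycles or violate the separation conditions, proposing to fix this afterwards by quotienting. That repair step is exactly the unresolved difficulty: a non-monotone winning strategy does not in general yield nested separations, and converting it into a same-width decomposition is the monotonicity-cost problem that the paper explicitly notes is open for directed cops-and-robber games and circumvents by building the constraints into the moves of the game itself. Moreover, your minimality argument presupposes an exact two-way correspondence between $k$ cops and width $k$; in the paper this equivalence is proved for the new game in both directions, and the converse direction (\cref{theoDec->Str}) is itself nontrivial, requiring nice $\Separations$-DAGs (\cref{niceExists}) and the forward-facing/cop-choice-vertex machinery. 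Without an argument replacing these two theorems, neither the width bound of the extracted decomposition nor its minimality follows, so the proposal has a genuine gap at its central step.
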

Subsequently, in~\cref{sec:ParGames}, we provide an adaptation of the high profile algorithm from~\cite{berwanger2006dag}, which solves parity games on DAG decompositions, to $\Separations$-DAGs obtaining our main result.
\begin{restatable}{proposition}{solveParity}
    \label{solveParity}
    Every parity game of bounded $\Separations$-DAG-width can be solved in polynomial time.
\end{restatable}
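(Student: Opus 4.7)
The plan is to adapt the dynamic-programming algorithm of Berwanger, Dawar, Hunter and Kreutzer for parity games on digraphs of bounded DAG-width to the $\Separations$-DAG setting. First, I apply \cref{compSDAG} to compute an $\Separations$-DAG of width equal to the $\Separations$-DAG-width $k$ of the input arena $D$; for fixed $k$ this runs in polynomial time. I then process this decomposition bottom-up, at each node computing a summary of the possible plays inside the sub-arena that the node controls and merging such summaries into those of its parent.

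For each node $d$ with associated separation $(A_d, B_d)$ and separator $X_d = A_d \cap B_d$, I maintain a table of \emph{game summaries} indexed by the possible outcomes at the interface $X_d$. A summary records, for every vertex $v \in X_d$, the winner if play reaches $v$ from below together with the highest priority the opponent can force along a play leaving the sub-arena through $v$. Since $|X_d| \leq k$ and priorities lie in a range of size $O(|V(D)|)$, the number of summaries per node is $|V(D)|^{O(k)}$, so the whole table has polynomial size when $k$ is fixed. At a leaf node the summary is computed directly from the local positions; at an internal node it is obtained by combining the summaries of the children, respecting both ownership of positions and admissible transitions through $X_d$.

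The main obstacle is adapting the combination step from plain vertex bags to directed separations. In DAG-width the guard between the interior and exterior of a sub-decomposition is simply a vertex set, whereas in an $\Separations$-DAG the interior and exterior are governed by a directed separation; in particular one must track that no edge enters $A_d \setminus B_d$ from $B_d \setminus A_d$ except through $X_d$. I expect to handle this via an analogue of the combination lemma from~\cite{berwanger2006dag}, showing that every play of the full parity game decomposes into pieces each contained in a single sub-arena associated with a descendant of $d$, with consecutive pieces meeting inside some separator. The separation property is precisely what guarantees the well-definedness of this decomposition: it rules out leakages from $B_d \setminus A_d$ into $A_d \setminus B_d$ that would otherwise bypass $X_d$. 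Once this lemma is in hand, the standard correctness and fixpoint arguments for parity games on bounded-width decompositions go through, yielding a polynomial-time algorithm whose total running time is $|V(D)|^{O(k)}$.
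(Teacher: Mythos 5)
There is a genuine gap, and it sits exactly where you wave your hands: the combination step at a node $d$ with two children. You propose to prove ``an analogue of the combination lemma'' from~\cite{berwanger2006dag}, but in an $\Separations$-DAG the bag family $(\bag(t))_{t\in V(T)}$ does not satisfy the DAG-decomposition condition~\ref{item:D:laminar}: the two children $c_1,c_2$ of $d$ carry crossing separations whose lower uncrossing is $\botS(d)$, so the regions $X_{\succcurlyeq c_1}$ and $X_{\succcurlyeq c_2}$ may share vertices that lie outside $\bag(d)$, and a play below $d$ can shuttle between the two sub-arenas through $\bag(c_1)$ or $\bag(c_2)$ without ever meeting the interface $\bag(d)$ that your summaries are indexed by. Consequently the play-decomposition statement you rely on is false as stated for your data structure, and it is not a routine adaptation of the DAG-width argument; this overlap is precisely the new phenomenon the proof has to deal with. (Your summary itself is also too coarse: the frontier must record, per starting vertex in $V_d$ and per relevant Even strategy, the antichain of Odd-optimal exit outcomes, not a single ``winner plus worst priority'' per separator vertex, and you also skip making the $\Separations$-DAG nice via \cref{niceExists}, which the branch case needs.)

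The paper does not prove a combination lemma on the $\Separations$-DAG at all. Instead, after normalising with \cref{niceExists}, it reduces each local step to a genuine DAG decomposition so that \cref{berwCompFront} can be invoked as a black box: for out-degree at most one this is a small four-node decomposition (\cref{degOneCompFront}); for the branching case it builds an auxiliary digraph $D'$ in which every vertex of $X_{\succcurlyeq c_1}\cap X_{\succcurlyeq c_2}\setminus\bag(d)$ is duplicated with an index recording which side the play is on, equips it with a parity game $P'$ and a DAG decomposition, and computes the frontier of $d'$ there (\cref{FrontInTime}). Transferring frontiers between $P$ and $P'$ is itself non-trivial, because strategies do not correspond bijectively (a memoryless strategy in $P'$ may act differently on the two copies of a shared vertex); this is handled by restricting frontiers to dominant strategies, justified by \cref{worseStratsDontHelp}, together with the congruence lemmas \cref{DsimsDP,DPsimsD,DPFrontAnalogDFront}. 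Your proposal would need to either supply a correct combination lemma for overlapping sub-arenas or introduce some equivalent disentangling device; as written, the central difficulty is deferred rather than resolved.
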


\ifshort{Due to space constraints we moved some technical proofs and details into the appendix while keeping proofsketches for the essential ideas.
Results with additional details in the appendix are marked by (\appsymb).}{}\else{}\fi

\section{Preliminaries}
\label{sec:prelem}
\appendixsection{sec:prelem}
\newcommand*{\tail}[1]{\Fkt{\variablestyle{tail}}{#1}}

All digraphs in this paper are simple, finite and without loops.
We refer the reader to~\cite{bang2008digraphs} for any standard definitions.
We use $\mathds{N}$ for the set of all non-negative integers.
For $i,j \in \mathds{N},$ we use $[i,j]$ to refer to the set of all integers which are greater or equal to $i$ and smaller or equal to $j.$
For a set $S,$ let $[S]$ be the set of all subsets of $S,$ let $[S]^{=i}$ be the set of all subsets of $S$ of size exactly $i$ and let $[S]^{\leq i}$ be the set of all subsets of $S$ of size at most $i.$
Similarly, let $S^{\mathds{N}}$ be the set of all finite tuples over $S$ and let $S^{i}$ be the set of all such tuples of length $i$.
For two sets $S_1$ and $S_2$ we denote the symmetric difference between $S_1$ and $S_2$ by $S_1 \triangle S_2$.
\toappendix{
Furthermore, we use $t_1:t_2$ to indicate the concatenation of two sequences $t_1$ and $t_2.$
For a sequence $t,$ let $\tail{t}$ be the last element in $t.$
We also index the elements in $t$ with $1$ to $|t|$ such that $t[i]$ is the $i$-th element in $t$ and $t[i,j]$ is the subsequence of $t$ containing the $i$-th to $j$-th element in $t.$}


\subsection{Digraphs}
A digraph $D=(V,E \subseteq (V \times V))$ is a tuple of its\define{vertex set} $V(D) \coloneqq V$ and its\define{edge set} $E(D) \coloneqq E.$
We define $|D|\coloneqq |V(D)| + |E(D)|$ to be the\define{size of $D$}. 
For $v \in V(D)$ we call $\outN{D}{v} \coloneqq \{w \in V(D) | (v,w) \in E(D) \}$ the\define{out-neighbourhood of $v$ in $D$} and $\inN{D}{v} \coloneqq \{w \in V(D) | (w,v) \in E(D) \}$ the\define{in-neighbourhood of $v$ in $D$}.
We also extend this notion to subsets $V' \subseteq V(D)$ by defining $\outN{D}{V'} \coloneqq (\bigcup_{v\in V'} \outN{D}{v}) \setminus V'$ and $\inN{D}{V'} \coloneqq (\bigcup_{v\in V'} \inN{D}{v})\setminus V'.$
Similarly, we define the\define{out-degree of $v$} by $\outD{D}{v} \coloneqq |\outN{D}{v}|$ and the\define{in-degree of $v$} by $\inD{D}{v} \coloneqq |\inN{D}{v}|.$
We call a digraph $D'$ a\define{subgraph of $D$}, if $V(D') \subseteq V(D)$ and $E(D') \subseteq E(D).$
For $V' \subseteq V(D),$ let $D[V'] \coloneqq (V',\{(v,w) |  v,w \in V' \text{ and } (v,w) \in E(D)\})$ be the\define{induced subgraph of $D$ on the vertex set $V'$}.
We use $D \setminus V'$ to denote the digraph $D[V(D) \setminus V'].$ 

For $l \in \mathds{N},$ we call a sequence $P = (v_0,v_1, \dots , v_{l-1}, v_{l}) \in V(D)^{\mathds{N}}$ a\define{path of length $l$ in $D$ from $v_0$ to $v_l$}, if $(v_i, v_{i+1}) \in E(D)$ for all $i \in [0,i-1].$
We call such a sequence $P$ a\define{circle of length $l+1$ in $D$}, if it is a path of length $l$ in $D$ and $(v_l,v_0) \in E(D).$ 
Let $v \in V(D).$
We use $\Reach{D}(v)$ to denote the set of vertices $w$ for which there exists a path from $v$ to $w$ in $D$ and we extend this notation to vertex sets $V' \subseteq V(D)$ using $\Reach{D}(V') \coloneqq \bigcup_{v \in V'} \Reach{D}(v).$
We call these sets the\define{reach of $v$} and\define{reach of $V'$} respectively.
We say $D$ is a\define{DAG} (directed acyclic graph) if there are no directed cycles in $D.$
For a DAG $D$ and $v \in V(D),$ we call $\outN{D}{v}$ the \define{children} of $v$ in $D.$
We say $D$ is a\define{directed tree} if there exists a vertex $v \in V(D),$ such that for every vertex $w \in V(D)$ there exists a unique path from $v$ to $w$ in $D.$
We further define $\preccurlyeq_D$ as the minimal partial ordering on the vertices of the DAG $D$ such that $v \preccurlyeq_D w$ for all $v,w \in V(D)$ for which there exists a path in $D$ from $v$ to $w.$
We say $v$ is\define{above} $w$ in $D$ and $w$ is\define{below} $v.$

\subsection{Directed separations}

For $S_A,S_B \subseteq V(D),$ we call $\mathscr{S} = (S_A \rightarrow S_B)$ a\define{separation in $D$} if $V(D) = S_A \cup S_B$ and $(b,a) \notin E(D)$ for all $a \in S_A \setminus S_B, b \in S_B \setminus S_A.$
We define $\Separations_D$ to be the set of separations in $D.$
We call $V_{\top}(\mathscr{S}) \coloneqq S_A \setminus S_B$ the\define{top side of $\mathscr{S}$}, $V_{S}(\mathscr{S}) \coloneqq S_A \cap S_B$ the\define{separator of $\mathscr{S}$} and $V_{\bot}(\mathscr{S}) \coloneqq S_B \setminus S_A$ the\define{bottom side of $\mathscr{S}$} for $\mathscr{S} \in \Separations_D.$ 
We also define $V_{A}(\mathscr{S}) \coloneqq S_A$ and $V_{B}(\mathscr{S}) \coloneqq S_B.$
We call $\mathscr{S}$ a\define{separation of order $k$ in $D$} if $|V_{S}(\mathscr{S})| = k$ and we define $\Separations_D^k$ to be the set of separations of order at most $k.$

\begin{figure}[!ht]
\centering
\begin{tikzpicture}[scale=0.7,>={Stealth[width=3mm,length=2mm]}]
    \begin{customlegend}[
        legend entries={ 
            $V_{\top}$,
            $V_A$,
            $V_{S}$,
            $V_B$,
            $V_{\bot}$
        },
        legend style={at={(8,1.5)},font=\footnotesize}] 
        \addlegendimage{area legend,fill=lightred,draw opacity=0}
        \addlegendimage{number in legend={$\bigcirc$},color=darkred}
        \addlegendimage{area legend,fill=customgreen,draw opacity=0}
        \addlegendimage{number in legend={$\bigcirc$},color=darkblue}
        \addlegendimage{area legend,fill=lightblue,draw opacity=0}
    \end{customlegend}
    \def\firstcircle{(-1.5,0) circle (2cm)}
  \def\secondcircle{(1.5,0) circle (2cm)}
    \begin{scope}
    \clip \firstcircle;
    \fill[customgreen] \secondcircle;
      \end{scope}
      \begin{scope}
    \clip \secondcircle;
    \fill[color=lightblue, even odd rule] \firstcircle \secondcircle;
      \end{scope}
      \begin{scope}
    \clip \firstcircle;
    \fill[color=lightred, even odd rule] \firstcircle \secondcircle;
      \end{scope}
      \draw[color=darkred] \firstcircle node {$V_A$};
      \draw[color=darkblue] \secondcircle node {$V_B$};
      \draw (-2,1.5) edge[->,bend left=30] (2,1.5);
      \draw (2,-1.5) edge[->,bend left=30] node[midway] (x) {} (-2,-1.5);
      \draw[color=red,line width=1.3pt] ($(x)+(45:0.4)$) edge ($(x)+(225:0.4)$);
      \draw[color=red,line width=1.3pt] ($(x)+(135:0.4)$) edge ($(x)+(315:0.4)$);
\end{tikzpicture}
    \caption{This figure illustrates the structure of the sets making up a separation. The arrows indicate that there can be edges from $V_{\top}$ to $V_{\bot}$ but not in the other direction.}
    \label{fig:SeparationExp}
\end{figure}
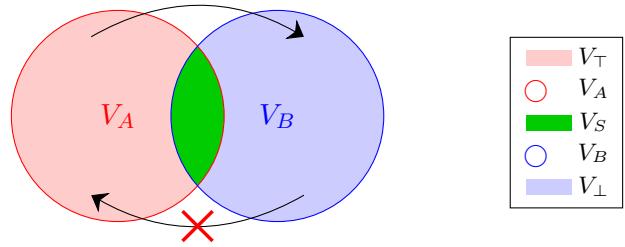

Furthermore, we define a partial order $\leq$ on $\Separations_D$ such that $\mathscr{S} \leq \mathscr{S}',$ if $V_A(\mathscr{S}) \subseteq V_A(\mathscr{S}')$ and $V_B(\mathscr{S}) \supseteq V_B(\mathscr{S}').$
Let $<$ be the set of tuples in $\leq$ that are not symmetric.
We say $\mathscr{S}$ is\define{laminar} to $\mathscr{S}'$ if $\mathscr{S} \leq \mathscr{S}'$ or $\mathscr{S}' \leq \mathscr{S}.$
If two separations are not laminar, they\define{cross}. 
For two separations $\mathscr{S}_1 \coloneqq (S_{1,A} \rightarrow S_{1,B}), \mathscr{S}_2 \coloneqq (S_{2,A} \rightarrow S_{2,B}),$ we can create two new separations called\define{uncrossings}.
The separation $(S_{1,A} \cap S_{2,A} \rightarrow S_{1,B} \cup S_{2,B})$ is called the\define{lower uncrossing of $\mathscr{S}_1$ and $\mathscr{S}_2$}, also written as $\mathscr{S}_1 \wedge \mathscr{S}_2,$ while the separation $(S_{1,A} \cup S_{2,A} \rightarrow S_{1,B} \cap S_{2,B})$ is called the\define{upper uncrossing of $\mathscr{S}_1$ and $\mathscr{S}_2$} also written as $\mathscr{S}_1 \vee \mathscr{S}_2.$
This notion extends to sets $Z \subseteq \Separations_D$ by the lower uncrossing $\bigwedge Z \coloneqq (\bigcap_{\mathscr{S} \in Z} V_A(\mathscr{S}) \rightarrow \bigcup_{\mathscr{S} \in Z} V_B(\mathscr{S}))$ and the upper uncrossing $\bigvee Z \coloneqq (\bigcup_{\mathscr{S} \in Z} V_A(\mathscr{S}) \rightarrow \bigcap_{\mathscr{S} \in Z} V_B(\mathscr{S})).$

\begin{definition}
    \label{sepDef}
    For $X \subseteq V(D) \setminus S$ and $S \in \Separators,$ let $\RangeSep{S}{X} \coloneqq (V(D) \setminus \Reach{D \setminus S}(X) \rightarrow S \cup \Reach{D \setminus S}(X)).$
    We simplify $\RangeSep{S}{\{v\}}$ to $\RangeSep{S}{v}.$
\end{definition} 

\begin{observation}\ifshort{[\appsymb]}{}\else{}\fi
    \label{sepPrf}
    For all $X \subseteq V(D)$ and $S \in \Separators,$ it holds that $\RangeSep{S}{X} \in \Separations_D^k.$
\end{observation}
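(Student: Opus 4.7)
The plan is to verify the three defining properties of a separation of order at most $k$ directly from the definition of $\RangeSep{S}{X}$, exploiting the elementary fact that a reachability set in $D \setminus S$ is closed under successors in $D \setminus S$ and is disjoint from $S$.

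First, I would establish notation: write $R \coloneqq \Reach{D \setminus S}(X)$ so that $\RangeSep{S}{X} = (V(D) \setminus R \rightarrow S \cup R)$. The union of the two sides is trivially all of $V(D)$, which takes care of the covering condition. Since the vertices in $S$ are deleted when passing to $D \setminus S$ and $X \subseteq V(D) \setminus S$, no element of $R$ lies in $S$, so $S \cap R = \emptyset$.

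Next I would compute the separator and the two sides. The top side is $(V(D)\setminus R)\setminus (S\cup R) = V(D)\setminus (S\cup R)$, the bottom side is $(S\cup R)\setminus (V(D)\setminus R) = R$ (using $S\cap R=\emptyset$), and the intersection is $S$ (again using $S\cap R=\emptyset$ together with $S\subseteq V(D)\setminus R$). Since $S \in \Separators$, i.e., $|S| \leq k$, this already gives the order bound.

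The only genuine content is the no-backward-edge condition: no edge of $D$ may go from the bottom side $R$ into the top side $V(D)\setminus(S\cup R)$. Suppose such an edge $(u,v)$ existed, with $u\in R$ and $v\notin S\cup R$. Because $v\notin S$, the edge $(u,v)$ still lies in $D\setminus S$; concatenating it with a witnessing path in $D\setminus S$ from some $x\in X$ to $u$ would put $v$ into $R$, contradicting $v\notin R$. This is the only step that uses any structural argument, and it is really just the observation that $\Reach{D\setminus S}(X)$ is closed under out-edges of $D\setminus S$; I expect it to be the only place where one has to write anything at all, the rest being bookkeeping about set operations.
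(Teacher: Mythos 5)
Your proposal is correct and follows essentially the same route as the paper's proof: compute the top side, separator and bottom side of $\RangeSep{S}{X}$ by elementary set manipulation, and verify the no-backward-edge condition by noting that an edge from $\Reach{D\setminus S}(X)$ to a vertex outside $S\cup\Reach{D\setminus S}(X)$ would extend a witnessing path in $D\setminus S$, contradicting the definition of reach; the order bound follows from $S\in\Separators$. No substantive difference.
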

\appendixproof{sepPrf}{
\begin{proof}
The object $\RangeSep{S}{X}$ fulfills $V(D) = V_{A}(\RangeSep{S}{X}) \cup V_{A}(\RangeSep{S}{X}).$

In the following we calculate the top side, separator and bottom side of $\RangeSep{S}{X}.$
\begin{align*}
    V_{\top}(\RangeSep{S}{X}) &{}= (V(D) \setminus \Reach{D \setminus S}(X)) \setminus (S \cup \Reach{D \setminus S}(X)) = (V(D) \setminus (S \cup \Reach{D \setminus S}(X)) \\
    V_{S}(\RangeSep{S}{X}) &{}= (V(D) \setminus \Reach{D \setminus S}(X)) \cap (S \cup \Reach{D \setminus S}(X)) = S \\
    V_{\bot}(\RangeSep{S}{X}) &{}= (S \cup \Reach{D \setminus S}(X)) \setminus (V(D) \setminus \Reach{D \setminus S}(X)) = \Reach{D \setminus S}(X)
\end{align*}
By definition of $\Reach{D \setminus S}$ there does not exist an edge $(v,w) \in E(D \setminus S)$ with $v \in \Reach{D \setminus S}(X)$ and $w \in V(D \setminus X) \setminus \Reach{D \setminus S}(X) = (V(D) \setminus (X \cup \Reach{D \setminus S}(X)) = V_{\top}(\RangeSep{S}{X}),$ as this edge would create a path from $X$ to $w$ in $D \setminus S.$ Thus $\RangeSep{S}{X} \in \Separations_D.$ Since $S \in \Separators,$ it follows that $\mathscr{S} \in \Separations_D^k.$
\end{proof}}

\begin{lemma}\ifshort{[\appsymb]}{}\else{}\fi
For all $X, Y \subseteq V(D),$ it holds that $\RangeSep{S}{X} \wedge \RangeSep{S}{Y} = \RangeSep{S}{X \cup Y}.$
\label{uncUp}
\end{lemma}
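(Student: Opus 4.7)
The plan is to unfold both sides of the asserted equality directly from the definitions and verify they coincide on the $A$-side and on the $B$-side separately, since a separation is determined by these two sets. The essential ingredient is the set-theoretic identity $\Reach{D \setminus S}(X) \cup \Reach{D \setminus S}(Y) = \Reach{D \setminus S}(X \cup Y)$, which is immediate from the definition $\Reach{D}(V') = \bigcup_{v \in V'} \Reach{D}(v)$ given in the preliminaries.

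First I would compute $V_A(\RangeSep{S}{X} \wedge \RangeSep{S}{Y})$ using the definition of the lower uncrossing: it equals $V_A(\RangeSep{S}{X}) \cap V_A(\RangeSep{S}{Y}) = (V(D) \setminus \Reach{D\setminus S}(X)) \cap (V(D) \setminus \Reach{D\setminus S}(Y))$. By De Morgan this is $V(D) \setminus (\Reach{D\setminus S}(X) \cup \Reach{D\setminus S}(Y))$, and using the reach identity above this is $V(D) \setminus \Reach{D\setminus S}(X \cup Y) = V_A(\RangeSep{S}{X \cup Y})$.

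Next I would similarly compute $V_B(\RangeSep{S}{X} \wedge \RangeSep{S}{Y}) = V_B(\RangeSep{S}{X}) \cup V_B(\RangeSep{S}{Y}) = (S \cup \Reach{D\setminus S}(X)) \cup (S \cup \Reach{D\setminus S}(Y))$. Regrouping and applying the same reach identity gives $S \cup \Reach{D\setminus S}(X \cup Y) = V_B(\RangeSep{S}{X \cup Y})$. Since a separation is uniquely specified by its two side-sets, the equality $\RangeSep{S}{X} \wedge \RangeSep{S}{Y} = \RangeSep{S}{X \cup Y}$ follows.

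There is no real obstacle here: the argument is a routine unfolding of definitions. The only thing worth being careful about is confirming that the right-hand side is actually a valid separation in $\Separations_D^k$, but this is already supplied by \cref{sepPrf}, so we may simply invoke it to know $\RangeSep{S}{X \cup Y}$ is well-defined as a separation and then compare its sides to those of the uncrossing.
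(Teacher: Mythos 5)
Your proposal is correct and matches the paper's own proof, which likewise unfolds the definition of the lower uncrossing and uses the identity $\Reach{D \setminus S}(X) \cup \Reach{D \setminus S}(Y) = \Reach{D \setminus S}(X \cup Y)$ (the paper just performs the $A$-side and $B$-side computations in a single displayed chain rather than separately). Your explicit De Morgan step and the remark that well-definedness is covered by \cref{sepPrf} are merely more detailed renderings of the same argument.
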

\appendixproof{uncUp}{
\begin{proof}
By considering the uncrossing, we get
\begin{align*}
    & \RangeSep{S}{X} \wedge \RangeSep{S}{Y} \\
    ={}& (V(D) \setminus \Reach{D \setminus S}(X) \rightarrow S \cup \Reach{D \setminus S}(X)) \wedge (V(D) \setminus \Reach{D \setminus S}(Y) \rightarrow S \cup \Reach{D \setminus S}(Y)) \\
    ={}& (V(D) \setminus \Reach{D \setminus S}(X \cup Y) \rightarrow S \cup \Reach{D \setminus S}(X \cup Y)) 
    = \RangeSep{S}{X \cup Y}.\qedhere
\end{align*}
\end{proof}}

\subsection{\texorpdfstring{$\Separations$}{Separation}-DAGs and DAG decompositions}
\label{sec:S-DAGs}

The notion of $\Separations$-DAGs was introduced in~\cite{dissMeike}.
Let $D$ be a digraph.
An\define{$\Separations$-DAG for $D$} is a tuple $(T, \sigma),$ where $T$ is a DAG of maximum out-degree at most two and $\sigma : E(T) \rightarrow \Separations$ is a function such that for all $t_1, t_2, t_3 \in V(T)$ with $(t_1, t_2), (t_2, t_3) \in E(T ),$ we have $\sigma((t_1, t_2)) \leq \sigma((t_2, t_3)).$
This property is called the \define{consistency} of $\Separations$-DAGs. 
Furthermore, for an $\Separations$-DAG and $t \in V(T),$ we define $\topS(t)$ as the\define{top separation of $t$}, where $\topS(t) \coloneqq (\emptyset \rightarrow V(D))$ if $t$ is a source in $T$ and otherwise $\topS(t) \coloneqq \bigvee_{c \in \inN{T}{t}} \sigma((t,c)).$
Using the top separations, we define $\botS(t)$ as the\define{bottom separation of $t$} where $\botS(t) \coloneqq (V(D) \rightarrow \emptyset)$ if $t$ is a sink in $T$ and otherwise $\botS(t) \coloneqq \bigwedge_{c \in \outN{T}{t}} \topS(c).$
Finally, we define $\bag(t)$ as the\define{bag of $t$} as $\bag(t) \coloneqq V_A(\botS(t)) \cap V_B(\topS(t))$ and we say $(T, \sigma)$ has width $k$ where $k$ is the maximum size of the bags of vertices in $T.$
We also say a digraph $D$ has $\Separations$-DAG-width $k,$ if $k$ is the the smallest number such that there exists a $\Separations$-DAGs of width $k$ for $D.$
\footnote{Note that this is equivalent to the parameter $\nu$-DAG-width from~\cite{dissMeike}.}

Since~\cref{sec:ParGames} is heavily based on the algorithm for parity games on graphs of bounded DAG-width in~\cite{berwanger2006dag}, we also make use of the notion of DAG decompositions introduced by~\cite{obdrzalek2006dag,berwanger2006dag}.
A DAG decomposition of a digraph $D$ is a tuple $(T, (X_t)_{t \in V(T)})$ where $T$ is a DAG and $(X_t)_{t \in V(T)}$ is a family of subsets of $V(D)$ such that
\begin{enamerate}{D}{item:D:last}{2em}
	\item \label{item:D:containsAll}
		$\bigvee_{t \in V(T)} X_t = V(D).$
	\item \label{item:D:laminar}
		$X_t \cap X_{t''} \subseteq X_{t'}$ for all vertices $t,t',t'' \in V(T)$ with $t \preccurlyeq_T t' \preccurlyeq_T t''.$
	\item \label{item:D:edgesGuard}
		$X_t \cap X_{t'}$ guards $X_{\succcurlyeq t'} \setminus X_t$ for all edges $(t, t') \in E(T),$ where $X_{\succcurlyeq t'}$ stands for $\bigvee_{t' \preccurlyeq_T t''} X_{t''}.$ For any source $t,$ $X_{\succcurlyeq t}$ is guarded by $\emptyset.$
	\label{item:D:last}
\end{enamerate}

For a vertex $t \in V(T),$ we also define $V_{\succcurlyeq t} \coloneqq X_{\succcurlyeq t} \setminus X_{t}$ as the vertices under $t$ in $D.$
We call $X_t$ the bag of $t$ and we define the width of a DAG decomposition as the size of the largest bag of a vertex in $T.$
A digraph $D$ has DAG-width $k$ if $k$ is the smallest number such that $D$ has a DAG decomposition of width $k.$
We also use the notion of nice DAG decompositions.
A DAG decomposition $(T, (X_t)_{t \in V(T)})$ of a digraph $D$ is nice if

\begin{enamerate}{N}{item:N:last}{2em}
	\item \label{item:N:uniqueSource}
	       $T$ has a unique source.
    \item \label{item:N:twoOut}
		Every $t \in V(T)$ has at most two successors.
	\item \label{item:N:bothSuccSameBag}
		For $t_0, t_1, t_2 \in V(T),$ if $t_1,$ $t_2$ are two successors of $t_0,$ then $X_{t_0} = X_{t_1} = X_{t_2}.$
	\item \label{item:N:oneSuccOneDiff}
		For $t_0, t_1 \in V(T),$ if $t_1$ is the unique successor of $t_0,$ then $|t_{d_0} \triangle X_{t_1}| = 1.$
	\label{item:N:last}
\end{enamerate}


\subsection{Parity Games}
\label{sec:parity_games_def}
\appendixsection{sec:parity_games_def}

A\define{parity game} is a tuple $(V , V_0, E, \Omega)$ where $(V, E)$ is a digraph, $V_0 \subseteq V$ and $\Omega : V \rightarrow \omega$ is a function assigning a priority to each vertex.
It represents a game played on $V$ by two players\define{Even} and\define{Odd} where the active player (Even, if the current vertex is in $V_0,$ and Odd otherwise) chooses the next move.
A\define{play} of a parity game is an infinite sequence $\pi = (v_i \mid i \in \omega)$ such that $(v_i, v_{i+1}) \in E$ for all $i.$
A\define{partial play} $\pi'$ is a finite or infinite segment of a play in a parity game.
We say a play $\pi$ is\define{winning for Even} if $\lim \inf_{i\rightarrow \infty} \Omega(v_i)$ is even and $\pi$ is\define{winning for Odd} otherwise.
\iflong{That is to say, $\pi$ is winning for Even if the smallest priority that occurs infinitely often in the sequence $(\Omega(v_i))_{i \in \omega}$ is even and it is winning for Odd otherwise.}{}\else{}\fi
A\define{strategy} is a map $f: V^{<\omega} \rightarrow V$ such that, for every sequence $(v_0, \dots , v_i) \in V^{<\omega},$ we have $(v_i , f(v_0, \dots , v_i)) \in E.$
A play $\pi = (v_i \mid i \in \omega)$ is\define{consistent} with Even playing $f$ if $v_i \in V_0$ implies $v_{i+1} = f(v_0, \dots , v_i)$ for $v_i \in \pi.$
Similarly, $\pi$ is consistent with Odd playing $f$ if $v_i \notin V_0$ implies $v_{i+1} = f(v_0, \dots , v_i)$ for $v_i \in \pi.$
A strategy $f$ is\define{winning for Even} from a vertex $v$ if every play beginning at $v$ that is consistent with Even playing $f$ is winning for Even.
\toappendix{A strategy is\define{memoryless} if whenever $u_0, \dots , u_i$ and $v_0, \dots , v_j$ are two sequences in $V^{<\omega}$ with $u_i = v_j$ and $u_i \in V_0,$ then $f(u_0, \dots , u_i ) = f(v_0, \dots , v_j).$
This notion is relevant for parity games since it has been shown that if there is a winning strategy for a player, there is also always a memoryless winning strategy for that player~\cite{emerson1991tree} meaning this strongly restricts the number of strategies to consider.
Furthermore, it is also known that parity games are determined, i.e.~for any game and starting position, either Even or Odd has a winning strategy.}

Parity games find application in different problems in automated verification specifically because they are equivalent to the model-checking of the modal $\mu$-calculus.
The complexity status of parity games is an often considered problem and it has been shown that parity games are solvable in quasi-polynomial time~\cite{calude2017deciding}.
Whether they are solvable in polynomial time remains an open question with the famous complexity conjecture $\textbf{FPT} \neq \textbf{W}[1]$ backing up the claim that they are not solvable in polynomial time.

\section{How to obtain an \texorpdfstring{$\Separations$}{S}-DAG-decomposition}
\label{sec:ConstructSDAG}
\appendixsection{sec:ConstructSDAG}

In this section we outline an algorithm to compute an $\Separations$-DAG of minimum width for a given digraph $D = (V,E).$
To this end we introduce a special variant of the cops-and-robber game.
We then show that the cops have a winning strategy in the game of size $k$ if and only if there exists an $\Separations$-DAG of $D$ with width $k$ and that a winning strategy for the game can be computed in $\mathcal{O}(n^{f(k)})$ time.
For certain steps of the proof it suffices to consider the set of separations $\{\RangeSep{X}{r} \mid \text{$X$ is a cop position, $r$ is a robber position}\}$.
This is crucial for computation time and space as there are only $n^{k+1}$ possible combinations of cop and robber positions whereas the size of the set of all possible separations of order $k$ is $\binom{n}{k} \cdot 2^{n-k} \notin \mathcal{O}(n^{f(k)}).$
We create this game purely for its algorithmic properties and connection to $\Separations$-DAG-width, thus the definition is rather technical. 

\begin{definition}[$\Separations$-DAG cops-and-robber game]
    \label{def:Scr_game}
    Given a digraph $D$ the\define{$\Separations$-DAG cops-and-robber game of size $k$} is a game between the cop and the robber player.
    There are three different types of game states for this game. The set $\CIGS$ of\define{cop initiative game states} is defined as $\CIGS \coloneqq \Separators \times V(D).$ The set $\RIGS_1$ of\define{single separator robber initiative game states} is defined as $\RIGS_1 \coloneqq V(D) \times (\Separators \times [\Separators]^{=1})$ whereas the set $\RIGS_2$ of\define{double separator robber initiative game states} is defined as $\RIGS_2 \coloneqq V(D) \times (\Separators \times [\Separators]^{=2}).$ Together these two sets form the set $\RIGS \coloneqq \RIGS_1 \cup \RIGS_2$ of\define{robber initiative game states}. This results in the set of all game states $\mathcal{G} = \CIGS \cup \RIGS = \CIGS \cup \RIGS_1 \cup \RIGS_2.$ For each of these game states $(r, (X, M)) \in \RIGS$ and $\CIGSex{X}{r} \in \CIGS,$ we call $r$ the\define{robber position} and $X$ the\define{cop position} and, for robber initiative game states, we call $M$ the set of\define{potential cop moves}.
    
    The game is played as follows:
    \begin{figure}[!t]
        \centering
        \begin{tikzpicture}[scale=0.5]
            \def\firstcircle{(-3,0) circle (2.5cm)}
            \def\secondcircle{(-3,-3.5) circle (2.5cm)}
            \def\cutcircle{(-3,0) circle (2.45cm) (-3,-3.5) circle (2.45cm)}
      
            \draw[color=darkviolet] \firstcircle node {$X$};
            \draw[color=darkblue] \secondcircle node {$X'$};
            \begin{scope}
                \clip \firstcircle; 
                \clip \secondcircle;
                \draw[color=black] \cutcircle (-3,-1.75) node {$X_{\bot}$};
            \end{scope}
            \draw (-7,-1.75) node {\ref{item:Mov:C1}};
            \def\Xbox{(7,-3.5) -- (4,-0.5) -- (7,2.5) -- (10,-0.5) -- (7,-3.5)}
            \def\STwo{(5,-1.5) -- (6,-0.5) -- (9.5,-4) -- (8.5,-5) -- (5,-1.5)}
            \def\SOne{(9,-1.5) -- (8,-0.5) -- (4.5,-4) -- (5.5,-5)-- (9,-1.5)}
            \def\XBot{(7,-3.45) -- (8.95,-1.5) (8.95,-1.5) -- (8,-0.55) -- (7,-1.55)  (7,-1.55) -- (6,-0.55) -- (5.05,-1.5) (5.05,-1.5) -- (7,-3.45)}
            \draw[rounded corners=8pt, color=darkviolet] \Xbox;
            \draw[color=darkviolet] (7,-0.5) node {$X$}; 
            \draw[rounded corners=8pt, color=red] \STwo (8.25,-3.75) node {$S_2$};
            \draw[rounded corners=8pt, color=blue] \SOne (5.75,-3.75) node {$S_1$};
            \draw[rounded corners=7.5pt, color=black] \XBot (7,-2.25) node {$X_{\bot}$};
            \draw (3,-1.75) node {\ref{item:Mov:C2}};
        \end{tikzpicture}
        \caption{This figure illustrates the different types of robber initiative game states. 
        For a single separator robber initiative game state the cop player chooses vertices for the cops to move to, such that all newly chosen cops are in the former range of the robber~\cref{item:C1:progressInReach}.
        To leave the robber no way to escape, the remaining cops in $X_{\bot}$ restrict that range~\cref{item:C1:botBlocks}.
        For a double separator robber initiative game state the cop player chooses two different sets $S_{1}$ and $S_{2}$ of vertices for the cops to move to.
        These sets represent two separations whose lower uncrossing has the separator $X_{\bot} \subseteq X$.
        \Cref{item:C2:containIntersection} ensures that $X_{\bot}$ is indeed the separator of this uncrossing.
        \Cref{item:C2:realProgress,item:C2:separatorPartBelow} yield that for every vertex the robber can move to, one of the two separations restricts the robbers range.
        By~\cref{item:C2:botBlocks} the uncrossing restricts the range of the robber ensuring that the robber does not escape.}
        \label{fig:RobberInitiativeGameStates}
    \end{figure}
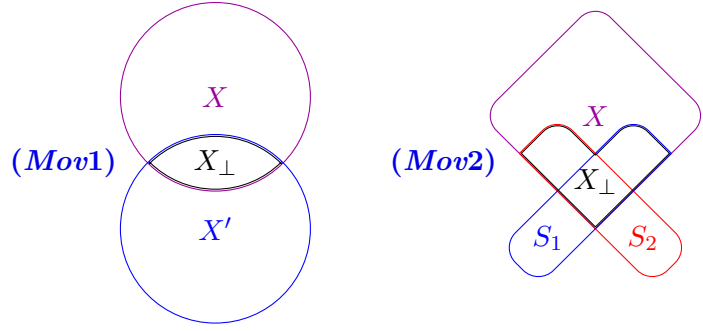
    
    \begin{itemize}
        \item As first move, the robber player chooses $r_0 \in V(D),$ yielding the game state $\CIGSex{\emptyset}{r_0} \in \CIGS.$
        \item From the game state $\CIGSex{X}{r} \in \CIGS$ the cop player can choose to move to game states in $\RIGS_1$ and $\RIGS_2.$ These possibilities are illustrated in \cref{fig:RobberInitiativeGameStates}.
    \end{itemize}
    
    \begin{enamerate}{$\textit{Mov}$}{item:Mov:last}{3em}
        \item\label{item:Mov:C1} To move to a game state in $\RIGS_1$ from a game state $\CIGSex{X}{r} \in \CIGS$ the cop player chooses a set $X' \in \Separators$ and we define $X_{\bot} \coloneqq X' \cap X$ such that the following properties hold:
            \begin{enameratesmall}{$\mathcal{R}_1$}{item:C1:last}{3em}
            	\item \label{item:C1:progressInReach}
            		$X' \setminus X \subseteq \Reach{D \setminus X}(r).$
            	\item \label{item:C1:botBlocks}
            		$\Reach{D \setminus X} (r) = \Reach{D \setminus (X_{\bot})} (r).$
            	\item \label{item:C1:realProgress}
            		$X' \neq X.$
            	\label{item:C1:last}
            \end{enameratesmall}
            This results in the game state $\RIGSOne{r}{X_{\bot}}{X'} \in \RIGS_1.$
        \item\label{item:Mov:C2} To move to a game state in $\RIGS_2$ from the game state $\CIGSex{X}{r} \in \CIGS$ the cop player chooses sets $S_{1}, S_{2} \in \Separators$ and we define $X_{\bot} \coloneqq X \cap (S_{1} \cup S_{2})$ such that the following properties hold:
            \begin{enameratesmall}{$\mathcal{R}_2$}{item:C2:separatorPartBelow}{3em}
            	\item \label{item:C2:containIntersection}
            		$S_{1} \cap S_{2} \subseteq X_{\bot}.$
            	\item \label{item:C2:botBlocks}
            		$\Reach{D \setminus X} (r) = \Reach{D \setminus X_{\bot}} (r).$
            	\item \label{item:C2:realProgress}
            		$\Reach{D \setminus X}(r) \supset \Reach{D \setminus S_{1}} (w)$ or $\Reach{D \setminus X} (r) \supset \Reach{D \setminus S_{2}} (w)$ for all $w \in \Reach{D \setminus X_{\bot}} (r) \setminus (S_{1} \cup S_{2}).$
                \item \label{item:C2:separatorPartBelow}
            	    $\Reach{D \setminus X} (r) \supset \Reach{D \setminus S_{3-i}} (w)$ for all $i \in \{1,2\}$ and $w \in S_{i} \setminus X_{\bot}.$
            	\label{item:C2:last}
        \end{enameratesmall}
        This results in the game state $\RIGSTwo{r}{X_{\bot}}{S_{1}}{S_{2}} \in \RIGS_2.$
    \item\label{item:Mov:R2} From the game state $\RIGSTwo{r}{X_{\bot}}{S_{1}}{S_{2}} \in \RIGS_2$ the robber player chooses $r' \in \Reach{D \setminus X_{\bot}} (r)$ and the cop player chooses $j \in \{1,2\}$ such that $r' \notin S_{j}$ and $\Reach{D \setminus X_{\bot}} (r) \supset \Reach{D \setminus (S_{j})} (r')$ resulting in the game state $\CIGSex{S_{j}}{r'} \in \CIGS.$
        Note that by construction such $r'$ and $j$ always exist.
    \item\label{item:Mov:R1} From the game state $\RIGSOne{r}{X_{\bot}}{X'}$ the robber player chooses $r' \in \Reach{D \setminus X_{\bot}} (r) \setminus X'$ resulting in the game state $\CIGSex{X'}{r'}.$
        \label{item:Mov:last}
    \end{enamerate}
    
    \begin{itemize}
        \item A \textit{play} in the game is a maximal sequence $\pi \coloneqq (\emptyset, r_0), (r_0, (\emptyset, \dots )), \dots$ of game states given by the rules above. Such a sequence is always finite as the graphs we consider are finite and for all finite graphs the number of positions in a play is bounded (see \cref{finitePlay}).
        \item A play $\pi$ is\define{winning for the cop player}, if and only if the last game state is a robber initiative game state.
            A play $\pi$ is \textit{winning for the robber player} if, and only if, it is not winning for the cop player.
        \item A strategy for the cop player in this game is a tuple $(f,g)$ of partial functions $f$ and $g$ with $f: \CIGS \to \RIGS$ and $g: \RIGS_2 \times V(D) \to \Separators$ such that, for every game state $p \in \Def(f),$ the game state $f(p) \in \RIGS$ is a move from $p$ as defined in \cref{item:Mov:C1,item:Mov:C2} and, for every game state $p = \RIGSTwo{r}{X}{S_1}{S_2} \in \RIGS_2$ and $v \in \Reach{D \setminus X}(r)$ with $(p,v) \in \Def(g),$ moving from $p$ to $(g((p,v)),v)$ fulfills \cref{item:Mov:R2}.
        \item A play $\pi$ is\define{consistent with a strategy $(f,g)$ for the cop player}, write $(f,g) \leadsto \pi,$ if, for every game state $p \in \Def(f)$ in $\pi,$ the following game state $p'$ equals $f(p)$ and, for every game state $p \in \RIGS_2$ and its following game state $\CIGSex{X}{r'} \in \CIGS,$ if $(p,r') \in \Def(g),$ then $X = g((p,r')).$
            Additionally a game state $p$ is\define{consistent with $(f,g)$}, write $(f,g) \leadsto p,$ if there exists a play consistent with $(f,g)$ which contains $p.$
            A strategy for the cop player is winning if every play consistent with it is winning for the cop player.
    \end{itemize}
\end{definition}

We call a strategy $(f, g)$ for the cop player in the $\Separations$-DAG cops-and-robber game\define{complete}, if $p \in \Def(f)$ for all $p \in \CIGS$ with $(f, g) \leadsto p$ and $(p,v) \in \Def(g)$ for all $p = \RIGSTwo{r}{X}{S_1}{S_2} \in \RIGS_2$ and $v \in \Reach{D \setminus X}(r).$

\begin{lemma}\ifshort{[\appsymb]}{}\else{}\fi
    \label{strictnessStrat}
    If there exists a winning strategy $(f_0, g_0)$ for the cop player, there also exists a complete winning strategy $(f', g')$ for the cop player.
\end{lemma}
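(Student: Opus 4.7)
The plan is to construct the complete strategy $(f', g')$ by extending the given winning strategy $(f_0, g_0)$ with arbitrary valid choices wherever it is undefined. The key idea is that any such extension strengthens the consistency constraints (since the cop is now forced to obey more of the strategy), so the set of plays consistent with $(f', g')$ is contained in the set of plays consistent with $(f_0, g_0)$, and winningness is preserved automatically.

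Concretely, I would first define $g'$ on every pair $(p, v)$ with $p = \RIGSTwo{r}{X}{S_1}{S_2} \in \RIGS_2$ and $v \in \Reach{D \setminus X}(r)$: use $g_0((p,v))$ where that is already defined, and otherwise pick any $j \in \{1,2\}$ such that transitioning to $\CIGSex{S_j}{v}$ satisfies \cref{item:Mov:R2}. The remark following \cref{item:Mov:R2} guarantees that such a $j$ always exists, so $g'$ is total on its intended domain. Next, I would define $f'$ on $\CIGS$: copy $f_0$ where it is defined; on states $p$ where $f_0$ is undefined but some valid cop move according to \cref{item:Mov:C1} or \cref{item:Mov:C2} exists, pick one arbitrarily; and leave $f'$ undefined on the remaining $\CIGS$-states (those admitting no valid cop move at all).

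To verify that $(f', g')$ is winning, consider any play $\pi$ consistent with $(f', g')$. By construction $f' \supseteq f_0$ and $g' \supseteq g_0$ as partial functions, so $\pi$ also obeys $f_0$ and $g_0$ wherever those are defined; hence $\pi$ is consistent with $(f_0, g_0)$ and therefore winning for the cop player. To verify completeness, the property for $g'$ holds by construction. For $f'$, suppose for contradiction that some $(f', g')$-consistent $p \in \CIGS$ is not in $\Def(f')$. Then by construction $p$ admits no valid cop move at all, so any play reaching $p$ must terminate there; but $p$ is a cop initiative game state, so such a terminating play is lost by the cop. Since this play is simultaneously consistent with $(f_0, g_0)$, this contradicts the hypothesis that $(f_0, g_0)$ is winning.

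The main obstacle I expect is rigorously justifying the containment of consistent plays after extension, which requires carefully unpacking the definition of consistency: arbitrary cop choices at positions where $g_0$ was undefined were permitted under $(f_0, g_0)$-consistency, and are now pinned down by $g'$; likewise for $f'$. Thus every $(f', g')$-consistent play is one specific resolution of an $(f_0, g_0)$-consistent play and therefore inherits its winning status. Beyond this the argument is essentially bookkeeping against the rules of the $\Separations$-DAG cops-and-robber game.
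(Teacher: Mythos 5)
Your proposal is correct and follows essentially the same route as the paper: extend $(f_0,g_0)$ to a complete strategy by making arbitrary valid choices where it is undefined (using the guaranteed existence of a valid $j$ for $g'$ and the fact that a winning strategy cannot let a consistent play end in a cop initiative state for $f'$), and then observe that every play consistent with the extension is consistent with $(f_0,g_0)$ and hence winning. Your variant of pinning down the existence of moves for $f'$ via a contradiction at the completeness check is just a repackaging of the paper's direct argument, so there is no substantive difference.
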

\appendixproof{strictnessStrat}{
\begin{proof}
    We find $f'$ by modifying $f_0.$ For every game state $p$ with $(f, g) \leadsto p$ and $p \notin \Def(f),$ we identify $p' \in \RIGS$ such that the cop player can move to $p'$ from $p$ and define $f'(p) = p'.$ Such a game state $p'$ exists for every such $p$ since no play $\pi$ with $(f, g) \leadsto \pi$ ends on a game state in $\CIGS$ and thus for every game state $p$ with $(f, g) \leadsto p$ there exists at least one game state to move to. We find $g'$ by modifying $g_0.$
    $(f',g)$ is a winning strategy since every play consistent with $(f',g)$ is also consistent with $(f,g).$ For all $p = \RIGSTwo{r}{X}{S_1}{S_2} \in \RIGS_2$ and $v \in \Reach{D \setminus X}(r),$ we define $g'((p,v)) = S_1,$ if $(S_1,v)$ fulfills \cref{item:Mov:R2}, otherwise we define $g'((p,v)) = S_2.$ By \cref{item:C2:separatorPartBelow} and \cref{item:C2:realProgress} one of $(S_1,v)$ and $(S_2,v)$ has to fulfill \cref{item:Mov:R2}. The newly created strategy $(f',g')$ is complete by construction and it is winning since $(f_0,g_0)$ is winning and the plays consistent with $(f',g')$ are a subset of the plays consistent with $(f_0,g_0)$
\end{proof}
}

\begin{lemma}\ifshort{[\appsymb]}{}\else{}\fi
    For every digraph $D,$ if $V(D)$ is finite, every play in the $\Separations$-DAG cops and robber game for $D$ is finite. 
\label{finitePlay}
\end{lemma}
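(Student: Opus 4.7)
The plan is to find a potential on cop initiative game states (CIGS) that is bounded below and strictly decreases between any two successive CIGS in a play. Since moves in the game alternate between CIGS and RIGS, a bound on the number of CIGS visited automatically yields a bound on the total length of any play.

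Concretely, I would associate to each CIGS $(X,r)$ the pair
\[
    \mu(X,r) \;:=\; \bigl(\,\lvert\Reach{D\setminus X}(r)\rvert,\; \lvert X\rvert\,\bigr),
\]
ordered lexicographically with reach size dominant. Each coordinate lies in $\{0,\ldots,|V(D)|\}$, so $\mu$ takes values in a finite well-ordered set, and it suffices to show $\mu$ decreases strictly from one CIGS to the next. For successive CIGS $(X_i,r_i),(X_{i+1},r_{i+1})$ with intervening RIGS of type~2 (i.e.\ reached via \cref{item:Mov:C2}), \cref{item:C2:botBlocks} together with the selection in \cref{item:Mov:R2} produces an $S_j$ with $\Reach{D\setminus S_j}(r_{i+1})\subsetneq\Reach{D\setminus X_{i,\bot}}(r_i)=\Reach{D\setminus X_i}(r_i)$, so the primary coordinate of $\mu$ drops.

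The step I expect to be the main obstacle is the type~1 case, because there the reach can fail to strictly shrink when the cop ``retreats'' without the robber losing reachable territory. Writing $R_i:=\Reach{D\setminus X_i}(r_i)$, \cref{item:C1:botBlocks} still identifies $R_i$ with $\Reach{D\setminus X_{i,\bot}}(r_i)$, and since $X_{i+1}\supseteq X_{i,\bot}$ the new reach lies in $R_i\setminus X_{i+1}$. If $X_{i+1}\cap R_i\neq\emptyset$ the primary coordinate of $\mu$ drops. Otherwise \cref{item:C1:progressInReach} forces $X_{i+1}\setminus X_i\subseteq X_{i+1}\cap R_i=\emptyset$, hence $X_{i+1}\subseteq X_i$, and \cref{item:C1:realProgress} upgrades this to $X_{i+1}\subsetneq X_i$, so the secondary coordinate of $\mu$ drops instead. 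In every case $\mu$ strictly decreases, and well-foundedness of the lexicographic order on a finite range closes the argument.
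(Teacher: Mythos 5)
Your proof is correct, and it is a refinement of the paper's argument rather than a copy of it. The paper uses only your primary coordinate: it asserts that between any two consecutive cop initiative game states the robber's reach strictly shrinks, and concludes that a play has at most $|V(D)|+1$ cop initiative states and $2|V(D)|+2$ states in total. For moves through $\RIGS_2$ this strictness is indeed built into \ref{item:Mov:R2} (together with \ref{item:C2:botBlocks}), exactly as you argue; but for moves through $\RIGS_1$ the paper justifies it only by ``legality'', and as you point out, legality alone does not give it: \ref{item:C1:progressInReach}, \ref{item:C1:botBlocks} and \ref{item:C1:realProgress} also permit a pure retreat $X' \subsetneq X$, in which case the robber may re-position inside an unchanged territory (for instance when $\Reach{D \setminus X_{\bot}}(r)$ induces a strongly connected part), so the reach-only potential can stall. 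Your secondary coordinate $|X|$ covers precisely this case: a retreat strictly shrinks the cop set while the reach cannot grow, so the lexicographic measure still drops, and finiteness follows from well-foundedness plus the alternation between $\CIGS$ and $\RIGS$. What each approach buys: the paper's count, valid whenever the cop move satisfies $X' \setminus X \neq \emptyset$, gives the sharper bound $2|V(D)|+2$ on play length (a strict-decrease fact that is also reused in \cref{noStateTwice}); your lexicographic potential yields a slightly weaker bound, roughly $(|V(D)|+1)(k+1)$ cop initiative states, but it establishes the stated finiteness for all legal cop behaviour, including retreats, without any extra assumption.
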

\appendixproof{finitePlay}{
\begin{proof}
    Notice that every play in the $\Separations$-DAG cops and robber game by definition alternates between cop initiative game states and robber initiative game states. \\
    Let $D$ be a digraph such that $V(D)$ is finite.
    Let $\pi$ be a play in an $\Separations$-DAG cops and robber game for $D$ and let $p = \RIGSSpe{r}{X_{\bot}}{P} \in \RIGS$ be a robber initiative game state in $\pi,$ let $p_{+1} = \CIGSex{S}{w} \in \CIGS$ with $S \in P$ be the game state directly after $p$ in $\pi$ and let $p_{-1} = \CIGSex{X}{r} \in \CIGS$ be the game state directly before $p$ in $\pi.$
    Since $p_{+1}$ is a legal move from $p$ it must hold that $\Reach{D \setminus S}(w) \subset \Reach{D \setminus X}(r).$
    Thus, for any pair of consecutive cop initiative game states, the reach of the robber is strictly decreasing.  
    Let $(\emptyset, s) \in \CIGS$ be the starting game state of a play. By definition $|\Reach{D}(s)| \leq |V(D)|.$
    As the reach of the robber decreases with every cop initiative game state the number of cop initiative game states in a play is at most $|V(D)|+1$ leading to a total number of game states of at most $2 |V(D)|+2.$
    \label{proFinitePlay}
\end{proof}}

\newcommand{\CopChoiceSet}[1]{\mathcal{CC}(#1)}
\begin{lemma}[\appsymb]
    \label{theoStr->Dec}
    If the cop player has a winning strategy in the $\Separations$-DAG cops-and-robber game of size $k$ on $D$, there exists an $\Separations$-DAG of width $k$ for $D$.
\end{lemma}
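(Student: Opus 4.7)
The plan is to construct the $\Separations$-DAG $(T, \sigma)$ explicitly from a complete winning strategy $(f, g)$ for the cop player in the $\Separations$-DAG cops-and-robber game of size $k$; the existence of such a complete $(f, g)$ is given by \cref{strictnessStrat}. All edge labels will be chosen to be separations of the form $\RangeSep{X}{r}$ from \cref{sepDef}, whose order is at most $k$ by \cref{sepPrf}.

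First, I introduce a node $t_p$ for every cop initiative game state $p = \CIGSex{X}{r}$ that is consistent with $(f, g)$. The strategy dictates the outgoing edges of $t_p$: if $f(p) = \RIGSOne{r}{X_{\bot}}{X'}$, then $t_p$ has one outgoing branch heading to the subsequent cop-state nodes $\CIGSex{X'}{r'}$, and if $f(p) = \RIGSTwo{r}{X_{\bot}}{S_1}{S_2}$, then $t_p$ has two branches, one per $S_i$, each heading to its subsequent cop-state nodes $\CIGSex{S_i}{r'}$. Because an $\Separations$-DAG only permits out-degree $2$, each branch's fan-out over the many robber responses $r'$ has to be realised as a binary tree of auxiliary splitter nodes. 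I label the edge entering such a splitter by the separation $\RangeSep{S}{R^\ast}$, where $S \in \{X', S_1, S_2\}$ is the relevant separator used by the strategy and $R^\ast$ is the set of robber responses handled in the subtree below; by \cref{uncUp} this is precisely the lower uncrossing of the individual $\RangeSep{S}{r'}$, so its separator is still just $S$ and hence of size at most $k$.

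I then verify the four properties of an $\Separations$-DAG. \emph{Acyclicity} follows from \cref{finitePlay} together with the strict-progress rules \cref{item:C1:realProgress,item:C2:realProgress}, which shrink the robber's reach along every directed path and so yield a topological ordering. \emph{Out-degree at most $2$} holds by construction. \emph{Consistency of $\sigma$} along two consecutive edges reduces to the monotonicity of $\RangeSep{S}{R}$ in its second argument combined with the strict reach shrinkage. The \emph{width $\leq k$} bound amounts to showing that at every node $t$ the bag $\bag(t) = V_A(\botS(t)) \cap V_B(\topS(t))$ is contained in the separator of one of the $\RangeSep{X}{r}$ labelling an incident edge, and so has size at most $k$.

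The main obstacle I anticipate is the fine-grained bookkeeping of $\topS$ and $\botS$, which are upper respectively lower uncrossings over \emph{all} incoming and outgoing edges of a node; this interacts non-trivially with nodes of larger in-degree, which arise when several plays converge to the same cop state. The essential tool here is \cref{uncUp}, which lets me rewrite an uncrossing of several $\RangeSep{S}{\cdot}$ as a single $\RangeSep{S}{R}$ without enlarging the separator $S$. A careful case distinction over the possible parent-child configurations — cop-state node to splitter, splitter to splitter, splitter to cop-state node — should then close the width bound in every case.
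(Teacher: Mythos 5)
You follow essentially the paper's blueprint: nodes for the cop-initiative states consistent with a complete winning strategy, binary splitter trees fanning out over the robber's responses, edge labels of the form $\RangeSep{S}{R}$ handled via \cref{uncUp}, and acyclicity from \cref{finitePlay}. However, as written your width argument fails at two concrete places. The first is the starting states: every state $\CIGSex{\emptyset}{r_0}$ with $r_0 \in V(D)$ is consistent with $(f,g)$, so each node $t_{\emptyset,r_0}$ is a source of your DAG. For a source $t$ the definition forces $\topS(t)=(\emptyset \rightarrow V(D))$, hence $\bag(t)=V_A(\botS(t))$, which contains $V(D)\setminus \Reach{D}(r_0)$ and is in general not bounded by $k$ at all; in particular your claim that every bag is contained in the separator of an incident edge label is false at sources. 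The paper repairs this with its step \cref{step:5}: one additional global source carrying a binary tree over all of $V(D)$ whose edges are labelled $\RangeSep{\emptyset}{\cdot}$, so that $\topS(t_{\emptyset,r_0})=\RangeSep{\emptyset}{r_0}$ and these bags become empty. Your sketch has no such device.

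The second, more substantive issue concerns single-separator moves $f(p)=\RIGSOne{r}{X_{\bot}}{X'}$ for $p=\CIGSex{X}{r}$. You attach $t_p$ directly to the splitter tree of the unique branch, so its outgoing label (and hence $\botS(t_p)$) is $\RangeSep{X'}{R^{\ast}}$ with $R^{\ast}=\Reach{D \setminus X_{\bot}}(r)\setminus X'$, while $\topS(t_p)=\RangeSep{X}{r}$. Since every edge leaving $\Reach{D\setminus X_{\bot}}(r)$ ends in $X_{\bot}\subseteq X'$, one has $\Reach{D\setminus X'}(R^{\ast})=R^{\ast}$, and together with \ref{item:C1:progressInReach} and \ref{item:C1:botBlocks} this gives $\bag(t_p)=(X\cup\Reach{D\setminus X}(r))\setminus R^{\ast}=X\cup X'$, whose size can approach $2k$ because \ref{item:C1:realProgress} only demands $X'\neq X$ and $X_{\bot}=X\cap X'$ may be small. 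So your construction only certifies width $2k$, not $k$, which is what the lemma (and the exact game--width equivalence used later) requires. This is precisely why the paper inserts the intermediate vertex $c_{(X_{\bot},X'),r}$ between $t_{X,r}$ and the splitter tree, with incoming label $\RangeSep{X_{\bot}}{r}$ and outgoing label $\RangeSep{X'}{R^{\ast}}$: it splits your oversized bag into the two bags $X$ and $X'$, each of size at most $k$. (In the two-branch case no fix is needed, since the two branch labels uncross to $\RangeSep{X_{\bot}}{r}$ and $\bag(t_p)=X$.) A minor further point: \cref{uncUp} only controls the lower uncrossings defining $\botS$; fan-in at nodes where plays converge is unproblematic not because of uncrossing but because all incoming edges of a node carry the identical separation, the label depending only on the target node, so $\topS$ is simply that common label.
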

\begin{proof}[Proofsketch]
    We show that from a winning strategy $(f,g)$ for the cop player in the $\Separations$-DAG cops-and-robber game of size $k$ on $D$ we can build an $\Separations$-DAG of width $k$ for $D$. 

Let $(f, g)$ be complete by~\cref{strictnessStrat}. 
We construct an $\Separations$-DAG which represents every game state $\CIGSex{X}{r} \in \CIGS$ for $X \in \Separators$, $r \in V(D)$ and $(f,g) \leadsto (X,r)$, with the separation $\RangeSep{X}{r}$.
The rest of the construction is built to connect these separations into an $\Separations$-DAG in accordance with $(f,g)$.

Let us first define a useful notion for this construction. This notion is the cop choice set $\CopChoiceSet{X,S,r}$. This set contains the vertices $w$ in $\Reach{D \setminus X} (r) \setminus S$ for which there exists a game state $(r, (X, M))$ with $S \in M$ where the cop player chooses to pursue the robber by going to $S$ if the robber moves to $w$. Explicitly, if $(f,g) \leadsto \RIGSOne{r}{X}{S}$, we define $\CopChoiceSet{X,S,r}$ to be $\Reach{D \setminus X} (r) \setminus S$ and otherwise we define $\CopChoiceSet{X,S,r}$ to contain all $w \in \Reach{D \setminus X} (r) \setminus S$
for which there exists $M \in [\Separators]^{=2}$ with $(f,g) \leadsto (r, (X, M))$ and $g((r, (X, M)), w) = S$.
We say\define{for $w \in \CopChoiceSet{X,S,r}$ the cop player chooses $S$ under $X$ with respect to $r$}.

We now build an $\Separations$-DAG $(T,\sigma)$ of width $k$ for $D$ from $(f,g)$ using the following steps:
\begin{enumerate}[label=\textbf{(\arabic*)}]
    \item\label{step:1} For every game state $\CIGSex{X}{r} \in  \CIGS$  with $(f,g) \leadsto \CIGSex{X}{r}$ we create a vertex $t_{X,r}$.
    \item\label{step:2} For every game state $p = \RIGSTwo{r}{X_{\bot}}{S_{1}}{S_{2}} \in \RIGS_2$ add two vertices $t_{(X_{\bot}, S_{1}), r}$ and $t_{(X_{\bot}, S_{2}), r}$ to $T$.
        Different game states may create vertices with the same name in this step in which case they refer to the same vertex.
        For every play $\pi$ with $(f,g) \leadsto \pi$ and $p \in \pi$, let $\CIGSex{X}{r}$ be the game state directly before $p$ in $\pi$.
        We add edges from $t_{X,r}$ to $t_{(X_{\bot}, S_{1}), r}$ and $t_{(X_{\bot}, S_{2}), r}$ and define $\sigma((t_{X,r},t_{(X_{\bot}, S_{j}), r})) = \RangeSep{S_j}{\CopChoiceSet{X_{\bot},S_j,r}}$ for $j \in \{1,2\}$. 
    \item\label{step:3} For every game state $p=\RIGSOne{r}{X_{\bot}}{X'} \in \RIGS_1$, add two vertices $c_{(X_{\bot}, X'), r}$ and $t_{(X_{\bot}, X'), r}$ to $T$ and, for every play $\pi$ with $(f,g) \leadsto \pi$ and $p \in \pi$, let $\CIGSex{X}{r}$ be the game state directly before $p$ in $\pi$.
        We add the edges from $(t_{X,r},c_{(X_{\bot}, X'), r})$ and $(c_{(X_{\bot}, X'), r},t_{(X_{\bot}, X'), r})$ and set $\sigma((t_{X,r},c_{(X_{\bot}, X'), r})) \coloneqq \RangeSep{X_{\bot}}{r}$ and $\sigma((c_{(X_{\bot}, X'), r},t_{(X_{\bot}, X'), r})) \coloneqq \RangeSep{X'}{\CopChoiceSet{X_{\bot},X',r}}$.
    \item\label{step:4} For every vertex $t \in V(T)$ of the form $t_{(X, S), r}$ use it as the root for a directed binary tree $T_{X,S,r}$ of height $\lceil \log_2(|\CopChoiceSet{X,S,r}|) \rceil$ with $\{t_{S,w} | w \in \CopChoiceSet{X,S,r}\}$ as the leaves.
        Note that in case there is only one leaf the root $t_{(X, S), r}$ is identified with the leaf.
        We call $S$ the\define{separator of $T_{X,S,r}$} as every edge in $T_{X,S,r}$ has $S$ as its separator.

        For a digraph $T'$ and $v \in V(T')$, let $\LeafReach{T'}(v) \coloneqq \{w \in V(D) \mid \exists S' $ with $t_{S',w} \in \Reach{T'}(v)\}$.
        Note that $S'$ here is always the separator of $T'$.
        We use $\LeafReach{T'}(t)$ to define $\sigma$ for the edges in $T_{X,S,r}$.     
        For every edge $e=(t_1,t_2)$ in $T_{X,S,r}$ we choose $\sigma(e) \coloneqq \RangeSep{S}{\LeafReach{T_{X,S,r}}(t_2)}$.
    \item\label{step:5} Finally, we create a source vertex $t_0$ for $T$.
        We use $t_0$ as the root for a directed binary tree $T_0$ of height $\lceil \log_2(|V(D)|) \rceil$ with $\{t_{\emptyset,w} | w \in V(D)|\}$ as leaves.
        Note that this makes $t_0$ the only source vertex in $T$.
        Here we call $\emptyset$ the separator of $T_0$.
        For every edge $e=(t_1,t_2)$ in $T_0$ we choose $\sigma (e) \coloneqq \RangeSep{\emptyset}{\LeafReach{T_0}(t_2)}$.
\end{enumerate}
Using \cref{uncUp}, we obtain that the resulting tuple $(T,\sigma)$ is an $\Separations$-DAG of width $k$ for $D$. 
\end{proof}
\appendixproof{theoStr->Dec}{
\begin{proof}
We show that from a winning strategy $(f,g)$ for the cop player in the $\Separations$-DAG cops-and-robber game of size $k$ on $D$ we can build an $\Separations$-DAG of width $k$ for $D$. 

Let $(f, g)$ be complete by~\cref{strictnessStrat}. 
We construct an $\Separations$-DAG which represents every game state $\CIGSex{X}{r} \in \CIGS$ for $X \in \Separators$, $r \in V(D)$ and $(f,g) \leadsto (X,r)$, with the separation $\RangeSep{X}{r}$.
The rest of the construction is built to connect these separations into an $\Separations$-DAG in accordance with $(f,g)$.

Let us first define a useful notion for this construction. This notion is the cop choice set $\CopChoiceSet{X,S,r}$. This set contains the vertices $w$ in $\Reach{D \setminus X} (r) \setminus S$ for which there exists a game state $(r, (X, M))$ with $S \in M$ where the cop player chooses to pursue the robber by going to $S$ if the robber moves to $w$. Explicitly, if $(f,g) \leadsto \RIGSOne{r}{X}{S}$, we define $\CopChoiceSet{X,S,r}$ to be $\Reach{D \setminus X} (r) \setminus S$ and otherwise we define $\CopChoiceSet{X,S,r}$ to contain all $w \in \Reach{D \setminus X} (r) \setminus S$
for which there exists $M \in [\Separators]^{=2}$ with $(f,g) \leadsto (r, (X, M))$ and $g((r, (X, M)), w) = S$.
We say\define{for $w \in \CopChoiceSet{X,S,r}$ the cop player chooses $S$ under $X$ with respect to $r$}.

We now build an $\Separations$-DAG $(T,\sigma)$ of width $k$ for $D$ from $(f,g)$ with the following steps:
\begin{enumerate}[label=\textbf{(\arabic*)}]
    \item For every game state $\CIGSex{X}{r} \in  \CIGS$  with $(f,g) \leadsto \CIGSex{X}{r}$ we create a vertex $t_{X,r}$.
    \item For every game state $p = \RIGSTwo{r}{X_{\bot}}{S_{1}}{S_{2}} \in \RIGS_2$ add two vertices $t_{(X_{\bot}, S_{1}), r}$ and $t_{(X_{\bot}, S_{2}), r}$ to $T$. Note that different game states may create vertices with the same name in this step in which case they refer to the same vertex.
        For every play $\pi$ with $(f,g) \leadsto \pi$ and $p \in \pi$, let $\CIGSex{X}{r}$ be the game state directly before $p$ in $\pi$.
        We add edges from $t_{X,r}$ to $t_{(X_{\bot}, S_{1}), r}$ and $t_{(X_{\bot}, S_{2}), r}$. We define $\sigma((t_{X,r},t_{(X_{\bot}, S_{j}), r})) = \RangeSep{S_j}{\CopChoiceSet{X_{\bot},S_j,r}}$ for $j \in \{1,2\}$. 
    \item For every game state $p=\RIGSOne{r}{X_{\bot}}{X'} \in \RIGS_1$, add two vertices $c_{(X_{\bot}, X'), r}$ and $t_{(X_{\bot}, X'), r}$ to $T$ and, for every play $\pi$ with $(f,g) \leadsto \pi$ and $p \in \pi$, let $\CIGSex{X}{r}$ be the game state directly before $p$ in $\pi$.
        We add edges from $t_{X,r}$ to $c_{(X_{\bot}, X'), r}$ and from $c_{(X_{\bot}, X'), r}$ to $t_{(X_{\bot}, X'), r}$.
        We define $\sigma((t_{X,r},c_{(X_{\bot}, X'), r})) = \RangeSep{X_{\bot}}{r}$ and $\sigma((c_{(X_{\bot}, X'), r},t_{(X_{\bot}, X'), r})) = \RangeSep{X'}{\CopChoiceSet{X_{\bot},X',r}}$.
    \item For every vertex $t \in V(T)$ of the form $t_{(X, S), r}$ use it as the root for a directed binary tree $T_{X,S,r}$ of height $\lceil \log_2(|\CopChoiceSet{X,S,r}|) \rceil$ with $\{t_{S,w} | w \in \CopChoiceSet{X,S,r}\}$ as the leaves.
        Note that in case there is only one leaf the root $t_{(X, S), r}$ is identified with the leaf.
        We call $S$ the\define{separator of $T_{X,S,r}$} as every edge in $T_{X,S,r}$ has $S$ as its separator.
    
        To define $\sigma$ for the edges in this tree using a set $\LeafReach{T'}(t)$ containing the vertices of $D$ represented by the leaves of the tree $T'$ which are reachable from a vertex $t \in V(T')$:
        For a digraph $T'$ and $v \in V(T')$, let $\LeafReach{T'}(v) \coloneqq \{w \in V(D) \mid \exists S' $ with $t_{S',w} \in \Reach{T'}(v)\}$.
        Note that $S'$ here is always the separator of $T'$.
        For every edge $e=(t_1,t_2)$ in $T_{X,S,r}$ we choose $\sigma(e) \coloneqq \RangeSep{S}{\LeafReach{T_{X,S,r}}(t_2)}$.
    \item Finally we create a source vertex $t_0$ for $T$.
        We use $t_0$ as the root for a directed binary tree $T_0$ of height $\lceil \log_2(|V(D)|) \rceil$ with $\{t_{\emptyset,w} | w \in V(D)|\}$ as leaves.
        Note that this makes $t_0$ the only source vertex in $T$.
        Here we call $\emptyset$ the separator of $T_0$.
        For every edge $e=(t_1,t_2)$ in $T_0$ we choose $\sigma (e) \coloneqq \RangeSep{\emptyset}{\LeafReach{T_0}(t_2)}$.
\end{enumerate}
We now prove that the resulting tuple $(T,\sigma)$ is an $\Separations$-DAG of width $k$ for $D$. 

By~\cref{sepPrf} the co-domain of $\sigma$ are indeed separations.

For every vertex there is at most one step which creates outgoing edges for it.
The construction in the steps \cref{step:2,step:3} create exactly two children and one child per vertex respectively since $(f,g)$ is complete and winning for the cop player. 
Thus, the digraph $T$ has maximum out-degree two as the constructions in the steps~\cref{step:4,step:5} are binary trees. 

Further note that $T$ does not contain a cycle as a cycle in $T$ would have to contain some vertex $t_{X,r}$ resulting in an infinite play $p$ with $(f,g) \leadsto p$ contradicting~\cref{finitePlay}.

We go through the different types of vertices created in the construction and consider their top separation. 
\begin{itemize}
    \item Let $t_{X,r}$ be a vertex created in the step~\cref{step:1}. 
    
        The incoming edges for $t_{X,r}$ are normally constructed in the step~\cref{step:4} or~\cref{step:5} where $t_{X,r}$ is a leaf in some tree(s) $T'$.
        We know that $\LeafReach{T'}(t_{X,r}) = \{r\}$ since $t_{X,r}$ is always a leaf in $T'$.
        Thus, the separation on those edges is $\RangeSep{X}{r}$ which is in both cases fully determined by $X$ and $r$ and thus all incoming edges have the same separation assigned to them, meaning that $\RangeSep{X}{r} = \topS(t_{X,r})$.
        
        Note that in the special case, where $t_{X,r}$ is the only leaf of $T'$ and thus it is also a vertex of the form $t_{(X^{-}_{\bot}, X), r^{-}}$, it has incoming edges created in step~\cref{step:2} or~\cref{step:3}.
        In this case $\{r\} = \CopChoiceSet{X^{-}_{\bot},X,r^{-}} = \LeafReach{T'}(t_{X,r})$ still resulting in the same separation assigned to the incoming edge.
        \begin{align*}
            \RangeSep{X}{r}&{}= (V(D) \setminus \Reach{D \setminus X}(r) \rightarrow X \cup \Reach{D \setminus X}(r)) \\
            &{}= (V(D) \setminus \{r\} \rightarrow X \cup \{r\}) \\
            &{}= (V(D) \setminus \Reach{D \setminus X}(\CopChoiceSet{X^{-}_{\bot},X,r}) \rightarrow X \cup \Reach{D \setminus X}(\CopChoiceSet{X^{-}_{\bot},X,r^{-}})) \\
            &{}= \RangeSep{X}{\CopChoiceSet{X^{-}_{\bot},X,r^{-}}}
        \end{align*}
    \item Let $t_{(X_{\bot}, S), r}$ be a vertex created in step~\cref{step:2} or~\cref{step:3}.
    
        The only incoming edges for $t_{(X_{\bot}, S), r}$ are created in step~\cref{step:2} or~\cref{step:3}.
        Similar to the first case, for all incoming edges $e$ of $t_{(X_{\bot}, S), r}$, the value $\sigma(e) = \RangeSep{S}{\CopChoiceSet{X_{\bot},S,r}}$ is only dependent on $X_{\bot}, S$ and $r$.
        Thus, this separation is identical for all incoming edges meaning that $\RangeSep{S}{\CopChoiceSet{X_{\bot},S,r}} = \topS(t_{(X_{\bot}, S), r})$. 
    \item Let $c_{(X_{\bot}, S), r}$ be a vertex created in step~\cref{step:3}
    The only incoming edges for $c_{(X_{\bot}, S), r}$ are created in step~\cref{step:3} with the separation $\RangeSep{X_\bot}{r}$ making this separation its top separation.
    \item Let $t$ be an inner vertex of a tree $T'$ in the step~\cref{step:4} or~\cref{step:5} of the construction such that $S'$ is the separator of $T'$.
        Then, since $T'$ is a tree, $t$ only has a single incoming edge whose assigned separation $\RangeSep{S'}{\LeafReach{T'}(t)}$ is $\topS(t)$.
    \item For the source vertex $t_0$ the set $\topS(t_0)$ is $(\emptyset, V(D))$ by definition.
\end{itemize}
From this we can defer that, for every vertex $t \in V(T)$, the separation, which $\sigma$ assigns to its incoming edges, is identical to $\topS(t)$.
Thus, for all children $t'$ of $t$ in $T$, we know that $\botS(t) \leq \sigma((t,t'))$ because $\botS(t) = \bigwedge_{c \in \outN{T}{t}}\sigma((t,c)) \leq \sigma((t,t'))$.
We now prove for every vertex $t \in V(T)$, that its bag size is at most $k$ and that $\topS(t) \leq \botS(t)$. This then shows that $\sigma$ is consistent as, $\sigma((t_{pre},t))= \topS(t) \leq \botS(t) \leq \sigma((t, t_{post}))$ for each pair of an incoming edge $(t_{pre},t) \in E(T)$ and an outgoing edge $(t, t_{post}) \in E(T)$ of $t$.
\begin{itemize}
    \item Let $t_{X,r}$ be a vertex created in the step~\cref{step:1}
    
        Since $(f,g)$ is winning for the cop player and complete, there is exactly one game state $p$ after the game state $\CIGSex{X}{r}$.
        The outgoing edges from $t_{X,r}$ are created in the step~\cref{step:2} or~\cref{step:3} and both of these cases result in the same bottom separation.
    
        We first consider the case that the outgoing edges are created in step~\cref{step:3}.
        In this case we know that $p$ is of the form $p=\RIGSOne{r}{X_{\bot}}{X'} \in \RIGS_1$ thus the only outgoing edge is to the vertex $c_{(X_{\bot}, X'), r}$. Its top separation is $\RangeSep{X_{\bot}}{r}$ which also makes it $\botS(t_{X,r})$.
        Secondly, we consider the case that the outgoing edges are created in step~\cref{step:2}.
        In this case we know that $p \in \RIGS_2$.
        Let $p= \RIGSTwo{r}{X_{\bot}}{S_1}{S_2}$ then the outgoing edges from $t_{X,r}$ are to the vertices $t_{(X_{\bot}, S_{1}), r}$ and $t_{(X_{\bot}, S_{2}), r}$. Their top separations are $\RangeSep{S_1}{\CopChoiceSet{X_{\bot},S_1,r}}$ and $
        \RangeSep{S_2}{\CopChoiceSet{X_{\bot},S_2,r}}$.
        Resulting in: 
        \begin{align*}
            \botS(t_{X,r}) ={}& \RangeSep{S_1}{\CopChoiceSet{X_{\bot},S_1,r}} \wedge \RangeSep{S_2}{\CopChoiceSet{X_{\bot},S_2,r}}\\
            ={}& \left( V(D) \setminus \Reach{D \setminus S_1}(\CopChoiceSet{X_{\bot},S_1,r}) \rightarrow S_1 \cup \Reach{D \setminus S_1}(\CopChoiceSet{X_{\bot},S_1,r})\right) \\
            &\wedge \left(V(D) \setminus \Reach{D \setminus S_2}(\CopChoiceSet{X_{\bot},S_2,r}) \rightarrow S_2 \cup \Reach{D \setminus S_2}(\CopChoiceSet{X_{\bot},S_2,r})\right) \\
            ={}& \Big(V(D) \setminus \left(\Reach{D \setminus S_1}(\CopChoiceSet{X_{\bot},S_1,r}) \cup \Reach{D \setminus S_2}(\CopChoiceSet{X_{\bot},S_2,r})\right) \\
            &\rightarrow S_1 \cup S_2 \cup \Reach{D \setminus S_1}(\CopChoiceSet{X_{\bot},S_1,r}) \cup \Reach{D \setminus S_2}(\CopChoiceSet{X_{\bot},S_2,r})\Big) \\
            \stackrel{(3)}{=} {}& (V(D) \setminus \Reach{D \setminus X_\bot}(r) \rightarrow S_1 \cup S_2 \cup \Reach{D \setminus X_\bot}(r)) \\
            \stackrel{(2)}{=} {}& (V(D) \setminus \Reach{D \setminus X_\bot}(r) \rightarrow X_{\bot} \cup \Reach{D \setminus X_\bot}(r)) \\
            ={}& \RangeSep{X_{\bot}}{r}.
        \end{align*}
        The equality $(2)$ follows from the fact that all vertices in $S_1$ and $S_2$ are either in $X_{\bot}$ or in $\Reach{D \setminus X_\bot}(r)$ as, for all $w \in S_i \setminus X_{\bot}$, by definition of the game $\Reach{D \setminus X_{\bot}} (r) \supset \Reach{D \setminus (S_{i,1-j})} (w)$ and thus $w \in \Reach{D \setminus X_{\bot}} (r)$.
        The equality $(3)$ follows from the fact that $\CopChoiceSet{X_{\bot},S_1,r} \cup \CopChoiceSet{X_{\bot},S_2,r} = \Reach{D \setminus X_\bot}(r)$. 
        
        It holds that $\topS(t_{X,r}) \leq \botS(t_{X,r})$ since $V(D) \setminus \Reach{D \setminus X}(r) \subseteq V(D) \setminus \Reach{D \setminus X_\bot}(r)$ and $X_{\bot} \cup \Reach{D \setminus X_\bot}(r) \subseteq X \cup \Reach{D \setminus X}(r)$.
        
        It also follows: 
        \begin{align*}
            \bag(t_{X_{\bot},r}) &{}= (X_{\bot} \cup \Reach{D \setminus X_{\bot}}(r)) \cap (V(D) \setminus \Reach{D \setminus X_{\bot}}(r)) \\ 
            &{}= (X \cup \Reach{D \setminus X_{\bot}}(r)) \cap (V(D) \setminus \Reach{D \setminus X_\bot}(r)) \\
            &{}= X.
        \end{align*}
        For the set $X$, by definition $|X| \leq k$.
    \item Let $t_{(X_{\bot}, S), r}$ be a vertex created in step~\cref{step:2} or~\cref{step:3} which is the root of the tree $T_{X_{\bot},S,r}$.
    
    We first consider the case that $t_{(X_{\bot}, S), r}$ is a sink. In this case $\botS(t_{(X_{\bot}, S), r}) = (V(D),\emptyset)$ and $\CopChoiceSet{X_{\bot}, S, r}$ is empty.
    Checking for laminarity is not necessary since there are no outgoing edges.
    Regarding $\bag(t_{(X_{\bot}, S), r})$ it follows:
    \begin{align*}
        \bag(t_{X_{\bot},r}) &{}= (S \cup \CopChoiceSet{X_{\bot},S,r}) \cap V(D) \\
        &{}= S \cap V(D) \\
        &{}= S.
    \end{align*}
    The set $S$ has size smaller or equal to $k$ by definition.
    
    Second, we consider the case that $t_{(X_{\bot}, S), r}$ does have outgoing edges. Notice that the case in which the tree $T_{X_{\bot},S,r}$ only has one leaf has already been dealt with in the first case, so we assume the tree has at least two leaves and thus $t_{(X_{\bot}, S), r}$ has out-degree two.
    These two outgoing edges are created in the construction of the tree in the step~\cref{step:4} to the two children $t_1$ and $t_2$ in the tree $T_{X_{\bot},S,r}$.
    By definition $\LeafReach{T_{X_{\bot},S,r}}(t_{(X_{\bot}, S), r}) = \LeafReach{T_{X_{\bot},S,r}}(t_1) \cup \LeafReach{T_{X_{\bot},S,r}}(t_2)$ as $t_{(X_{\bot}, S), r}$ is not a leaf and thus all leafs reachable from $t_{(X_{\bot}, S), r}$ are also reachable from one of its children.
    
    To make the following equations more easily readable we define: $T' = T_{X_{\bot},S,r}$.
    This results in: 
    \begin{align*}
        \botS(t_{(X_{\bot}, S), r}) 
        &{}= \RangeSep{S}{\LeafReach{T'}(t_1)} \wedge \RangeSep{S}{\LeafReach{T'}(t_2)} \text{\hspace*{\fill}} \\
        &{}\stackrel{\text{\ref{uncUp}}}{=} \RangeSep{S}{\LeafReach{T'}(t_1) \cup \LeafReach{T'}(t_2)} \\
        &{}= \RangeSep{S}{\LeafReach{T'}(t_{(X_{\bot}, S), r})} \\
        &{}= \RangeSep{S}{\CopChoiceSet{X_{\bot},S,r}}. \\
    \end{align*}
    This separation is equal to $\topS(t_{(X_{\bot}, S), r})$ for which the separator is $S$ with $|S| \leq k$. 
    \item Let $c_{(X_{\bot}, X'), r}$ be a vertex created in step~\cref{step:3}.
    Its only outgoing edge is to the vertex $t_{(X_{\bot}, X'), r}$.
    This vertex has the top separation $\RangeSep{S}{\CopChoiceSet{X_{\bot},X',r}}$ making it the bottom separation for $c_{(X_{\bot}, X'), r}$. We know that $\topS(c_{(X_{\bot}, X'), r}) \leq \botS(c_{(X_{\bot}, X'), r})$ since $V(D) \setminus \Reach{D \setminus X_{\bot}}(r) \subseteq V(D) \setminus \Reach{D \setminus X'}(\CopChoiceSet{X_{\bot},X',r})$ and $X_{\bot} \cup \Reach{D \setminus X_{\bot}}(r) \supseteq X' \cup \Reach{D \setminus X'}(\CopChoiceSet{X_{\bot},X',r})$.
    
    Additionally, recalling that in this case $\CopChoiceSet{X_{\bot},X',r} = \Reach{D \setminus X_{\bot}}(r) \setminus X'$ and $X' \setminus X_{\bot} \subseteq \Reach{D \setminus X_{\bot}}(r)$, it follows:
    \begin{align*}
        \bag(c_{(X_{\bot}, X'), r}) &{}= (X_{\bot} \cup \Reach{D \setminus X_{\bot}}(r)) \cap (V(D) \setminus \Reach{D \setminus X'}(\CopChoiceSet{X_{\bot},X',r})) \\
        &{}= (X_{\bot} \cup \Reach{D \setminus X_{\bot}}(r)) \cap (V(D) \setminus \CopChoiceSet{X_{\bot},X',r}) \\
        &{}= (X_{\bot} \cup (X' \cap \Reach{D \setminus X_{\bot}}(r)) \cup (\Reach{D \setminus X_{\bot}}(r) \setminus X')) \\
        &{}\cap (V(D) \setminus \CopChoiceSet{X_{\bot},X',r}) \\
        &{}= (X' \cup \CopChoiceSet{X_{\bot},X',r}) \cap (V(D) \setminus \CopChoiceSet{X_{\bot},X',r}) \\
        &{}= X'.
    \end{align*}
    By definition $X' \in \Separators$.
    \item Let $t$ be an inner vertex of a tree $T'$ in step\cref{step:4} or~\cref{step:5} of the construction such that $S'$ is the separator of $T'$.
    
    Let $t_1$ and $t_2$ be the two children of $t$ in the tree $T'$. This case is analogous to the second case. Note that $\LeafReach{T'}(t) = \LeafReach{T'}(t_1) \cup \LeafReach{T'}(t_2)$.
    It follows: 
    \begin{align*}
        \botS(t) &{}=\RangeSep{S'}{\LeafReach{T'}(t_1)} \wedge \RangeSep{S'}{\LeafReach{T'}(t_2)} \\
        &{}\stackrel{\text{\ref{uncUp}}}{=} \RangeSep{S'}{\LeafReach{T'}(\{t_1,t_2\})} \\
        &{}= \RangeSep{S'}{\LeafReach{T'}(t)}.
    \end{align*}
    So here the bottom separation is again identical to the top separation with the separator being $S'$ with $|S'| \leq k$. Also the top and bottom separations are equal and thus the separation assigned to the incoming edge is laminar to the separation assigned to the outgoing edges. 
    \item For the source vertex $t_0$, consider that the robber can choose all vertices in the first turn. Since a vertex can be reached from itself, it follows that $\LeafReach{T_0}(t_0) = V(D)$.
    
    Let $t_1, t_2 \in V(D)$ be the children of $t_0$ in $T_0$.
    Then the bottom set can be calculated like it is calculated in the third case resulting in
    \begin{align*}
        \botS(t_0) &{}= \RangeSep{\emptyset}{\LeafReach{T_0}(t_0)} \\
        &{}= \RangeSep{\emptyset}{V(D) \setminus X_0}.
    \end{align*}
    Thus the separator is $\emptyset \in \Separators$.
    In the special case in which there is only a single vertex $v$ in $V(D)$ the vertex $t_0$ is also a leaf of $T_0$ called $t_{\emptyset,v}$ and its only child $c_{(X_{0,\bot}, X_1), v}$ is created in step~\cref{step:3} with the separation $\RangeSep{\emptyset}{v}$ resulting in the separator $\emptyset$ as well.
\end{itemize}
In conclusion, since $T$ is a DAG of maximum out-degree two, $\sigma$ is consistent and $|\bag(t)| \leq k$ for all $t \in V(D)$, the tuple $(T, \sigma)$ is an $\Separations$-DAG for $D$ of width at most $k$.
\end{proof}}


\begin{definition}
A\define{nice $\Separations$-DAG $(T, \sigma)$ of width $k$} for $D$ is an $\Separations$-DAG of width $k$ for $D$ with the following properties: 
\begin{enamerate}{N}{item:nice:childrenUseful}{2em}
	\item \label{item:nice:uniqueSource}
		$T$ has a unique source $s$ with $\bag(s)=\emptyset$.
	\item \label{item:nice:normedTop}
		$\sigma((v,w)) = \topS(w)$ for every edge $(v,w) \in E(T)$.
    \item \label{item:nice:onlyOne}
		$|\bag(t) \triangle \bag(c)| \leq 1$ for all $t \in V(T)$ with $|\outN{D}{t}| = 1$ and $c \in \outN{D}{t}$.
	\item \label{item:nice:splitNoChange}
		$\topS(c) = \botS(c)$ for all $t \in V(T)$ with $|\outN{D}{t}| = 2$ and $c \in \outN{D}{t}$.
	\item \label{item:nice:childrenUseful}
		$\sigma((v,w_1))$ and $\sigma((v,w_2))$ cross for all $(v,w_1),(v,w_2) \in E(T)$ with $w_1 \neq w_2$.
	\label{item:nice:last}
\end{enamerate}
\label{niceDecDef}
\end{definition}

\begin{theorem}[\appsymb]
    If there exists an $\Separations$-DAG $(T, \sigma)$ of width $k$ for $D$, there exists a nice $\Separations$-DAG $(T', \sigma')$ of width $k$ for $D$ with $|T'| \in O(|T| \cdot |V(D)|)$.
    \label{niceExists}
\end{theorem}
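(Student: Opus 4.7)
The plan is to transform the given $\Separations$-DAG $(T,\sigma)$ into a nice one $(T',\sigma')$ by applying the five normalisations corresponding to \cref{item:nice:uniqueSource} through \cref{item:nice:childrenUseful} in sequence, in an order chosen so that each step preserves all previously established properties, the consistency of $\sigma'$, and the width bound $k$. Throughout, I would track the number of newly introduced vertices so as to obtain $|T'| \in O(|T|\cdot |V(D)|)$ at the end.

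I would start by enforcing \cref{item:nice:normedTop}: for every edge $(v,w)\in E(T)$, replace $\sigma((v,w))$ by $\topS(w)$. Since $\topS(w)$ equals the upper uncrossing of the incoming edges at $w$, this change leaves every $\topS$, $\botS$, and bag intact, and consistency is preserved because for every directed path $(t_1,t_2),(t_2,t_3)$ one has $\topS(t_2)\leq \sigma((t_2,t_3))\leq \topS(t_3)$ already in the original. Next, I would enforce \cref{item:nice:uniqueSource} by adjoining a fresh source $s$ with $\bag(s)=\emptyset$ and linking it to the previous sources via a directed binary tree, exactly as in step~\ref{step:5} of the proof of \cref{theoStr->Dec}; the edges on this tree carry separations with separator $\emptyset$ and therefore do not affect the width. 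For \cref{item:nice:childrenUseful}, whenever a vertex $v$ has two children $w_1,w_2$ whose separations are laminar, say $\sigma((v,w_1))\leq \sigma((v,w_2))$, I would reroute the edge $(v,w_1)$ to leave $v$ via $w_2$: the dominated child contributes nothing to $\botS(v)$, so this preserves $\botS(v)$ while turning every remaining sibling pair into a crossing pair.

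The remaining two properties are handled by subdivisions. For \cref{item:nice:splitNoChange}, every child $c$ of a branching vertex with $\topS(c)<\botS(c)$ is handled by inserting a new vertex $c'$ on the incoming edge so that $\topS(c')=\botS(c')=\topS(c)$; the branching vertex then sees only successors meeting the property. For \cref{item:nice:onlyOne}, I would subdivide every single-child edge $(t,c)$ into a path whose consecutive bags differ by exactly one element of $V(D)$: first remove the vertices in $\bag(t)\setminus\bag(c)$ one at a time, then insert those in $\bag(c)\setminus\bag(t)$ one at a time, assigning to each intermediate edge a separation that interpolates between $\botS(t)$ and $\topS(c)$ along the resulting chain. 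Because $|\bag(t) \triangle \bag(c)|\leq 2k$ and all intermediate bags sit inside existing bags, the width remains at most $k$, and each original edge contributes at most $O(|V(D)|)$ new vertices, which yields the announced size bound. The main technical obstacle I anticipate is precisely this interpolation step: one must exhibit, between two nested separations of order at most $k$, a chain of order-at-most-$k$ separations in $\Separations_D$ changing the separator by at most one vertex per step, and I expect this to follow from uncrossing closure (cf.~\cref{uncUp}) applied to separations of the form $\RangeSep{S}{X}$ that already occur as top and bottom separations in the original $\Separations$-DAG.
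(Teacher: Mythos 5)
Your overall skeleton matches the paper's proof (reassigning $\sigma$ to top separations, a new source attached by a binary tree, subdividing children of branching vertices, interpolating single-child edges one bag element at a time, and eliminating dominated siblings), but two steps have genuine problems. First, your handling of \ref{item:nice:childrenUseful} does not work as stated. With $\sigma((v,w_1))\leq\sigma((v,w_2))$ the dominated edge is $(v,w_2)$, because $\botS(v)=\sigma((v,w_1))\wedge\sigma((v,w_2))=\sigma((v,w_1))$; rerouting the edge $(v,w_1)$ so that it ``leaves $v$ via $w_2$'' would change $\botS(v)$ to $\sigma((v,w_2))$, contradicting your own preservation claim, and the new edge $(w_2,w_1)$ would violate consistency, since it must carry a separation at least $\topS(w_2)=\sigma((v,w_2))$ while $\topS(w_1)=\sigma((v,w_1))\leq\sigma((v,w_2))$. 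If you instead meant to hang the dominated child $w_2$ below $w_1$, consistency is fine, but $w_1$ may then have out-degree three (so the intermediate object is not even an $\Separations$-DAG), its bottom separation and bag change, and previously arranged properties at $w_1$ can be destroyed, so you would need a termination and size argument for the resulting cascade; none is given. The paper instead simply deletes the dominated edge $(v,w_2)$, and it must then also handle the clean-up you omit: if $w_2$ loses its last in-edge it becomes a source, its top separation jumps to $(\emptyset\rightarrow V(D))$ so its bag can exceed $k$ (and \ref{item:nice:uniqueSource} is violated), hence $w_2$ and all iteratively created new sources are deleted.

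Second, the interpolation for \ref{item:nice:onlyOne} is exactly the step you leave open, and the route you propose for it does not apply: in an arbitrary $\Separations$-DAG the separations assigned by $\sigma$ need not be of the form $\RangeSep{S}{X}$ (that form only arises in the strategy-to-decomposition construction of \cref{theoStr->Dec}), so \cref{uncUp} gives you nothing here; moreover nothing requires the intermediate separations to have order at most $k$ --- only the bags must stay of size at most $k$. The actual argument is elementary and explicit: with $U=\bag(t)\setminus\bag(c)$ and $W=\bag(c)\setminus\bag(t)$, the paper writes down the chain of separations obtained from $\sigma((t,c))=\topS(c)$ by shifting, one vertex at a time, first the elements of $U$ and then the elements of $W$ into the separator; each member is a separation simply because enlarging the separator of a separation preserves the separation property, the chain is laminar, and every intermediate bag is contained in $\bag(t)$ or $\bag(c)$. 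With that explicit chain, and with the edge-deletion-plus-source-removal fix above in place of your rerouting, your argument coincides with the paper's and the size bound goes through as you state.
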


\appendixproof{niceExists}{
\begin{proof}
    Let $(T, \sigma)$ be an $\Separations$-DAG of width $k$ for $D$. Based on $(T, \sigma)$ we construct a nice $\Separations$-DAG $(T', \sigma')$ of width $k$ for $D$.

    If $T$ has multiple sources, let $R$ be the set of sources of $T$. We construct a binary tree with $R$ as the leaves, For every edge $e$ in the tree we choose $\sigma'(e)$ to be the minimum separation and add it to $T'$. If $T$ only has a single source, we add a single vertex and add an edge to the only source with the minimum separation assigned to it. In both cases, this results in $T'$ fulfilling \ref{item:nice:uniqueSource}. The result of the construction remains an $\Separations$-DAG of order $k$ as the bags of the vertices in $R$ remain unchanged, the bags of all new vertices are empty and the minimum separation is smaller or equal to all separations.
    
    For all $(u,v) \in E(T)$, we leverage the construction from \cite[Lemma 3.4.15]{dissMeike} and modify $\sigma$ to $\sigma'$ with $\sigma'((u,v)) = \topS(v)$, fulfilling \ref{item:nice:normedTop}. This construction results in an $\Separations$-DAG as the top separations are unchanged and $\sigma'$ fulfills consistency as in all $\Separations$-DAGs $\topS(v) \leq \sigma((v,w)) \leq \topS(w)$.

    For all $t \in V(T)$ and $c \in \outN{D}{t}$ with $|\outN{D}{t}| = 1$ and $l = |\bag(t) \triangle \bag(c)| > 1$, let $U = \{u_1, \dots, u_m\} \coloneqq \bag(t) \setminus \bag(c)$ and let $W = \{w_1, \dots, w_n\} \coloneqq \bag(c) \setminus \bag(t)$ such that $m+n=l$.
    It follows that $\sigma((t,c)) = (V_A(\botS(c)) \setminus W \rightarrow V_B(\topS{t}) \setminus U)$.
    We replace $t$ and $c$ with $l+1$ different vertices $c_i$, for $i \in [1,l+1]$, such that the incoming edges to $t$ go to $c_1$ instead and the outgoing edges from $c$ start in $c_{l+1}$ instead. We also introduce edges from $c_i$ to $c_{i+1}$ for $i \in [1,l]$. For $i \in [1,m]$, we set $\sigma((c_{i},c_{i+1}))= (V_A(\botS(c)) \setminus W \rightarrow V_B(\topS{t}) \setminus \{u_{1},\dots,u_i\})$ and for $i \in [1,n]$, we set $\sigma((c_{m+i},c_{m+i+1}))= (V_A(\botS(c)) \setminus \{w_{i+1},\dots,w_{n}\} \rightarrow V_B(\topS{t}) \setminus U)$.
    All the given values for $\sigma$ are separations since they are all variations of $\sigma((t,c))$ with a bigger separator and they are all laminar to each other.
    The resulting bags are all at most of size $k$ since $c_0$ has the same bag size as $t$ and $c_l$ has the same bag size as $c$, while the bag for all other created vertices is a strict subset of one of these two bags.
    Applying this construction for all such pairs of $t$ and $c$ satisfies \ref{item:nice:onlyOne} as the bags for all created vertices differ by exactly one from their in-neighbour and out-neighbour.
    
    For all $t \in V(T)$ and $c \in \outN{D}{t}$ with $|\outN{D}{t}| = 2$ and $\topS(c) \neq \botS(c)$, we split $c$ into $c_{\top}$ and $c_{\bot}$ such that the incoming edges of $c$ go to $c_{\top}$ instead and the outgoing edges from $c$ start in $c_{\bot}$ instead. We introduce the edge $(c_{\top},c_{\bot})$ with $\sigma((c_{\top},c_{\bot}))= \topS(c)$. This construction fulfills \ref{item:nice:splitNoChange}, since $\topS(c_{\top}) = \botS(c_{\top})$ and $c_{\top}$ has out-degree $1$. Also it is still an $\Separations$-DAG of width $k$ as the bag size for $c_{\top}$ and $c_{\bot}$ is smaller or equal to $c$ and the separations assigned to their incident edges are laminar. 

    For all $(v,w_1),(v,w_2) \in E(T)$ with $w_1 \neq w_2$ and $\sigma((v,w_1)) \leq \sigma((v,w_2))$ we delete the edge $(v,w_2)$. The bottom separation of $v$ remains unchanged since $\botS(v)= \topS(w_1) \wedge \topS(w_2) = \sigma((v,w_1)) \wedge \sigma((v,w_2)) = \sigma((v,w_2))$. In case $w_2$ has another incoming edge, its top separation also remains unchanged by \ref{item:nice:normedTop}. In case $w_2$ has no other incoming edge we delete it and iteratively delete all sources generated in this way. This process is shown exemplary in \cref{fig:partDAGdel}. This deletion of vertices still results in an $\Separations$-DAG as there are still vertices remaining and for all remaining vertices other then $v$ their outgoing edges are unchanged and their top separation is unchanged as their remaining incoming edges still have their original top separation assigned to them by \ref{item:nice:normedTop}. Thus the generated graph is an $\Separations$-DAG fulfilling \ref{item:nice:childrenUseful}. 

    The steps ensuring \ref{item:nice:splitNoChange} and \ref{item:nice:uniqueSource} each create at parts linearly to the number of vertices in $T$, while the construction for \ref{item:nice:onlyOne} creates less then $2 \cdot V(D)$ new parts for the $\Separations$-DAG. Thus the total size of the resulting construction $T'$ is in $O(|T| \cdot |V(D)|)$.
    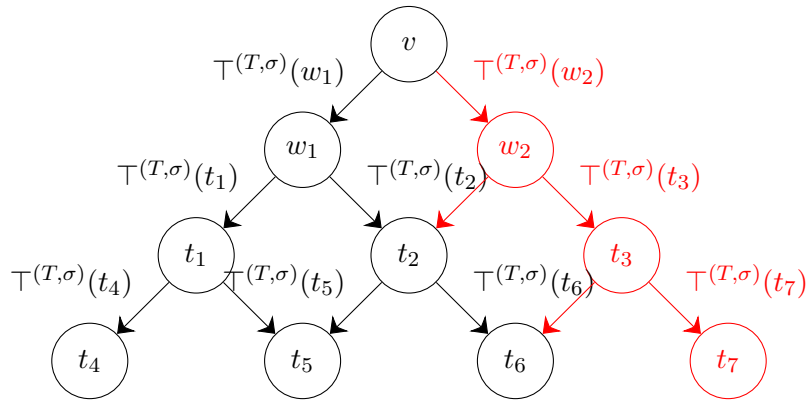
\begin{figure}[!ht]
        \centering
        \begin{tikzpicture}[scale=0.7,>={Stealth[width=3mm,length=2mm]}]
        \node[draw, circle,minimum size=1cm] (v) at (0,0) {$v$};
        \node[draw, circle,minimum size=1cm] (w1) at (-2,-2) {$w_1$};
        \node[draw, circle,minimum size=1cm] (t1) at (-4,-4) {$t_1$};
        \node[draw, circle,minimum size=1cm] (t2) at (0,-4) {$t_2$};
        \node[draw, circle,minimum size=1cm] (t4) at (-6,-6) {$t_4$};
        \node[draw, circle,minimum size=1cm] (t5) at (-2,-6) {$t_5$};
        \node[draw, circle,minimum size=1cm] (t6) at (2,-6) {$t_6$};
        \node[draw, circle,minimum size=1cm, color=red] (w2) at (2,-2) {$w_2$};
        \node[draw, circle,minimum size=1cm, color=red] (t3) at (4,-4) {$t_3$};
        \node[draw, circle,minimum size=1cm, color=red] (t7) at (6,-6) {$t_7$};
        \path[->] (v) edge node[above left] {$\topS(w_1)$} (w1)
        (w1) edge node[above left] {$\topS(t_1)$} (t1)
        (t1) edge node[above left] {$\topS(t_4)$} (t4)
        (t2) edge node[above left] {$\topS(t_5)$} (t5)
        (w1) edge node[above right] {$\topS(t_2)$} (t2)
        (t1) edge (t5)
        (t2) edge node[above right] {$\topS(t_6)$} (t6);
        \path[->,color=red] (v) edge node[above right] {$\topS(w_2)$} (w2)
        (w2) edge (t2)
        (w2) edge node[above right] {$\topS(t_3)$} (t3)
        (t3) edge (t6)
        (t3) edge node[above right] {$\topS(t_7)$} (t7);
        \end{tikzpicture}
        \caption{This figure illustrates the process of deleting an edge $(v,w_2)$ and all newly resulting sources ($w_2,t_6,t_7$) as an example for the process in the proof for \cref{niceExists}. The part of the graph that is deleted is in red. The bottom separation for $v$ is unchanged as $\sigma((v,w_1)) \leq \sigma((v,w_2))$ and thus $\botS(v) = \sigma((v,w_1)) \wedge \sigma((v,w_2)) = \sigma((v,w_1))$. For all remaining vertices the top separations are unchanged as their incoming edges still have their original top separations assigned to them.}
        \label{fig:partDAGdel}
    \end{figure}
\end{proof}}

\begin{theorem}[\appsymb]
    \label{theoDec->Str}
    If there exists an $\Separations$-DAG of width $k$ for $D$ then the cop player has a winning strategy in the $\Separations$-DAG cops-and-robber game of size $k$ on $D$.
\end{theorem}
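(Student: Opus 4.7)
The plan is to first invoke \cref{niceExists} and assume the given $\Separations$-DAG $(T,\sigma)$ of width $k$ for $D$ is nice. Using $(T,\sigma)$ I will construct a winning strategy $(f,g)$ for the cop player by associating every reachable cop initiative state $\CIGSex{X}{r}$ to a vertex $t \in V(T)$, maintaining the invariant $X = \bag(t)$ and $r \in V_{\bot}(\botS(t))$. The initial state $\CIGSex{\emptyset}{r_0}$ is assigned to the unique source $s$ provided by \ref{item:nice:uniqueSource}: because $\bag(s) = \emptyset$ and $\botS(s) = (\emptyset \rightarrow V(D))$, both parts of the invariant hold trivially.

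The strategy dispatches on the out-degree of the associated vertex $t$, mirroring the construction in \cref{theoStr->Dec}. When $t$ has two children $c_1, c_2$, property \ref{item:nice:splitNoChange} gives $\botS(c_i) = \topS(c_i)$ and hence $\bag(c_i) = V_S(\topS(c_i))$; then $f$ performs move \ref{item:Mov:C2} with $S_i \coloneqq \bag(c_i)$, and $g$ is chosen so that for every robber reply $r' \in \Reach{D \setminus X_\bot}(r)$ it returns an index $j$ with $r' \in V_{\bot}(\topS(c_j))$, so that the resulting state $\CIGSex{S_j}{r'}$ is associated with $c_j$. When $t$ has a unique child $c$ with $\bag(t) \neq \bag(c)$, property \ref{item:nice:onlyOne} yields $|\bag(t) \triangle \bag(c)| = 1$, and $f$ plays move \ref{item:Mov:C1} with $X' \coloneqq \bag(c)$, associating the next state with $c$; the degenerate case $\bag(t) = \bag(c)$ is reduced to the previous setting by passing directly to $c$, which can only happen finitely often along any chain of $T$. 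When $t$ is a sink, $\botS(t) = (V(D) \rightarrow \emptyset)$ forces $\bag(t) = V_B(\topS(t)) \supseteq \Reach{D \setminus X}(r)$, so $X' \coloneqq X \cup \Reach{D \setminus X}(r)$ is a separator of size at most $k$; $f$ playing move \ref{item:Mov:C1} with this $X'$ leaves the robber with $\Reach{D \setminus X}(r) \setminus X' = \emptyset$ and terminates the play at the subsequent $\RIGS_1$ state, which is a win for the cop.

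Because every transition associates the next cop initiative state with a child of $t$ in $T$ and hence moves strictly lower in the DAG ordering, \cref{finitePlay} implies that every play consistent with $(f,g)$ is finite and ends at a sink of $T$, where the argument above traps the robber. Consequently $(f,g)$ is a winning strategy for the cop player.

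The main obstacle will be verifying the technical conditions of \ref{item:Mov:C2} (and the analogous ones for \ref{item:Mov:C1}) under the invariant. Condition \ref{item:C2:containIntersection} reduces to $\bag(c_1) \cap \bag(c_2) \subseteq \bag(t)$, which follows from $\botS(t) = \topS(c_1) \wedge \topS(c_2)$ together with the consistency $\topS(t) \leq \topS(c_i)$. For \ref{item:C2:botBlocks} the key observation is $V_S(\botS(t)) \subseteq \bag(c_1) \cup \bag(c_2) = S_1 \cup S_2$, hence $V_S(\botS(t)) \subseteq X_\bot$; combined with $\bag(t) \subseteq V_A(\topS(c_j))$, this confines the robber's reach to $V_{\bot}(\botS(t))$ and makes it insensitive to cops in $X \setminus X_\bot$. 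The crossing property \ref{item:nice:childrenUseful} is essential for \ref{item:C2:realProgress} and \ref{item:C2:separatorPartBelow}: because $\topS(c_1)$ and $\topS(c_2)$ are not laminar, for every robber target $w \in V_{\bot}(\botS(t)) \setminus (S_1 \cup S_2)$ one of $S_1, S_2$ strictly shrinks the robber's accessible region, yielding exactly the strict inclusion of reaches required.
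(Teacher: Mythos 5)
Your overall plan, turning a nice $\Separations$-DAG into a cop strategy by walking down $T$, is the right one, but two of your concrete choices make the prescribed moves illegal in general. First, playing the full bags is not allowed: rule \ref{item:C1:progressInReach} demands $X' \setminus X \subseteq \Reach{D \setminus X}(r)$, and with $X' = \bag(c)$ the (unique) new bag vertex need not lie in the robber's reach at all, since $\Reach{D\setminus \bag(t)}(r)$ can be a small portion of $V_{\bot}(\botS(t))$ far away from $\bag(c)\setminus\bag(t)$. Likewise, in the two-child case rule \ref{item:C2:separatorPartBelow} quantifies over \emph{all} $w \in S_i \setminus X_{\bot}$; with $S_i = \bag(c_i)$ such a $w$ may lie outside $X \cup \Reach{D\setminus X}(r)$, and then the required strict inclusion $\Reach{D \setminus X}(r) \supset \Reach{D \setminus S_{3-i}}(w)$ fails already because $w \notin \Reach{D\setminus X}(r)$. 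The paper avoids exactly this by playing the truncated sets $X' = \bag(c) \cap (X \cup \Reach{D\setminus X}(r))$ and $S_i = \bag(c_i) \cap (X \cup \Reach{D\setminus X}(r))$.

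Second, and more fundamentally, descending one child at a time (skipping only children with equal bag) does not secure the progress conditions \ref{item:C1:realProgress} and \ref{item:C2:realProgress}. If $\Reach{D\setminus X}(r)$ is disjoint from $\bag(c_1)$ and $\bag(c_2)$ (the robber can be confined deep below $t$), then for $w=r$ one gets $\Reach{D\setminus S_j}(r) = \Reach{D\setminus X}(r)$ for both $j$, so neither separator shrinks the reach strictly and the move violates \ref{item:C2:realProgress}; in the one-child case the corrected $X'$ would equal $X$, violating \ref{item:C1:realProgress}. Your appeal to \ref{item:nice:childrenUseful} cannot repair this: crossing of $\topS(c_1)$ and $\topS(c_2)$ is a property of the separations alone and is independent of the robber position $r$. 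This is precisely why the paper does not maintain your invariant $X = \bag(t)$, but instead, before each cop move, descends to a sink of the sub-DAG $T_{t,r,X}$ of nodes whose bottom separation still contains the whole reach below it (the forward-facing vertex), which guarantees \ref{item:FF:children_intersect}, i.e.\ that the reach meets the bag of every child, and works with the weaker invariants \ref{item:FF:X_is_above}--\ref{item:FF:children_intersect}. A smaller symptom of the same problem: at a sink $t$ your invariant $r \in V_{\bot}(\botS(t))$ is impossible, since there $V_{\bot}(\botS(t)) = \emptyset$. Without the truncation of the cop positions and the forward-facing descent (or an equivalent mechanism), the proposed strategy makes illegal or non-progressing moves, so the proof as written has a genuine gap.
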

\begin{proof}[Proofsketch]
    We assume there exists an $\Separations$-DAG of width $k$ for $D$.
    By \cref{niceExists}, there also exists a nice $\Separations$-DAG of width $k$ for $D$.
    We show that from a nice $\Separations$-DAG of width $k$ for $D$ we can build a winning strategy $(f, g)$ for the cop player in the $\Separations$-DAG cops-and-robber game of size $k$ on $D$.
    
    Let $(T, \sigma)$ be a nice $\Separations$-DAG of width $k$ for $D$.
    We construct a complete strategy $(f, g)$ by iteratively designating a vertex we call a\define{forward-facing vertex in $T$} for every cop initiative game states consistent with our strategy. 
    We use these forward facing vertices to identify a following robber initiative game state.
    We also introduce a vertex we call a\define{cop choice vertex for $v$ under $p$ in $T$} for each robber initiative game state $p = \RIGSSpe{r}{X}{P}$ and vertex $v \in V(D)$, where $(f,g) \leadsto p$ and $v \in \Reach{D \setminus X}(r)$.
    The cop choice vertex is used to find a forward facing vertex for $p' = \CIGSex{S}{v}$ where $S \in P$.
    This process is illustrated in~\cref{fig:FFAndCC}.
    
    A forward-facing vertex $x$ for a game state $\CIGSex{X}{r} \in \CIGS$ has to fulfill the following properties: 
    \begin{enamerate}{FF}{item:FF:children_intersect}{3em}
    	\item \label{item:FF:X_is_above}
    		$X \subseteq V_A(\botS(x))$. -- ``covering''
    	\item \label{item:FF:x_blocks}
    		$\Reach{D \setminus (X \cap \bag(x))}(r) = \Reach{D \setminus X}(r)$. -- ``blocking''
    	\item \label{item:FF:Reach_r_under}
    		$\Reach{D \setminus X}(r) \subseteq V_{\bot}(\botS(x))$. -- ``controlled''
    	\item \label{item:FF:children_intersect}
    		$\Reach{D \setminus X}(r) \cap \bag(c) \neq \emptyset$ for all $c \in \outN{T}{x}$. -- ``forward facing''
    	\label{item:FF:last}
    \end{enamerate}
    
    A cop choice vertex $t$ for $v$ under $p = \RIGSSpe{r}{X_{\bot}}{P} \in \RIGS$ has to fulfill the property that there exists an $S \in P$ with $\Reach{D \setminus S}(v) \subset \Reach{D \setminus X}(r)$, $\Reach{D \setminus S}(v) \subseteq V_{\bot}(\botS(t))$ and $S \subseteq V_A(\botS(t))$.
    
    \begin{figure}[!t]
    \centering
    \begin{tikzpicture}[scale=0.6,yscale=0.8,>={Stealth[width=3mm,length=2mm]}]
        \tikzset{
    mystyle/.style={
      circle,
      inner sep=0pt,
      text width=5mm,
      align=center,
      minimum size=0.6cm
      }
    }
    \colorlet{tDAGcolour}{orange!30!white}
    \colorlet{tDAGcolourframe}{orange!50!white}
        \node[draw=gray, mystyle] (w1) at (-2,-2) { };
        \node[draw=gray, mystyle] (w2) at (3,-2) { };
        \node[draw=gray, mystyle] (t7) at (5,-4) { };
        \node[draw, mystyle,draw=blue] (t1) at (-4,-4) {$\hat{f}$};
        \node[draw=gray,mystyle] (t2) at (0,-4) { };
        \node[draw=gray,mystyle] (t4) at (-6,-6) { };
        \node[draw, mystyle, color=red, fill=tDAGcolour] (t) at (-2,-6) {$t$};
        \node[draw=gray, mystyle] (t6) at (2,-6) { };
        \node[draw, mystyle, draw=tDAGcolourframe, fill=tDAGcolour] (g1) at (-4,-8) { };
        \node[draw, mystyle, draw=tDAGcolourframe, fill=tDAGcolour] (g2) at (0,-8) { };
        \node[draw, mystyle, draw=tDAGcolourframe, fill=tDAGcolour] (g3) at (-6,-10) { };
        \node[draw, mystyle, draw=magenta, fill=tDAGcolour] (c1) at (-2,-10) {$\hat{f}'_1$};
        \node[draw, mystyle, draw=magenta, fill=tDAGcolour] (c2) at (-4,-12) {$\hat{f}'_2$};
        \node[draw=gray,mystyle] (o1) at (0,-12) { };
        \node[draw=gray,mystyle] (o3) at (-2,-13) { };
        \node[draw=gray,mystyle] (o2) at (-4,-14.5) { };
        \node[draw=gray,mystyle] (t8) at (2,-10) { };
        \node[draw=gray,mystyle] (t9) at (5,-8) { };
        \node[draw=gray,mystyle] (t10) at (5,-10) { };
        \path[->, color=gray]
        (c1) edge (o1)
        (c2) edge (o2)
        (c1) edge (o3)
        (g2) edge (t8)
        (t6) edge (t8)
        (w2) edge[out=235,in=80] (c1)
        (w1) edge (t1)
        (w2) edge (t7)
        (t7) edge (t6)
        (t1) edge (t4)
        (t2) edge (t)
        (w1) edge (t2)
        (t2) edge (t6)
        (t8) edge[out=235,in=65] (o1)
        (t7) edge (t9)
        (t9) edge (t10)
        (t10) edge (o1);
        \path[->, color=red]
        (t1) edge (t);
        \path[->, color=tDAGcolourframe]
        (t) edge (g1)
        (t) edge (g2)
        (g1) edge (g3)
        (g1) edge (c1)
        (g2) edge (c1)
        (g3) edge (c2);
    \end{tikzpicture}
        \caption{This figure illustrates the process of finding a forward facing vertex for a game state $p=\CIGSex{X}{r}$ from the cop choice vertex $t$ for $r$ under $p_{-1}$.
        The red circled cop choice vertex $t$ is a child of the forward facing vertex $\hat{f}$(in blue) of the last cop initiative game state $p_{-2}$.
        In light orange is the DAG $T_{t,r,X}$, which contains all vertices $v$ starting from $t$ which lie above $\Reach{D \setminus X}(r)$ meaning that $\Reach{D \setminus X}(r) \subseteq V_{\bot}(\botS(v))$.
        All sinks of this DAG are marked in pink.
        These sinks $\hat{f}'_1$ and $\hat{f}'_2$ are the possible choices for the forward-facing vertex for $p$.}
        \label{fig:FFAndCC}
    \end{figure}
    
    Given the forward-facing vertex $x$ for a game state $p =
    \CIGSex{X}{r} \in \CIGS$, we define $f(p)$ and $g((p,v))$ for $v \in \Reach{D \setminus X}(r)$ as well as the following cop choice vertices.
    If $x$ has one child $c$, the cop player chooses to move to $X' = \bag(c) \cap (X \cup \Reach{D \setminus X}(r))$ resulting in the value $f(p) = \RIGSOne{r}{X_{\bot}}{X'}$ where $X_{\bot}=X \cap \bag(c)$.
    For $v \in \Reach{D \setminus X}(r) \setminus X'$, we also designate $c$ as the cop choice vertex for $v$ under $f(p)$.  
    If $x$ has two children $c_1, c_2$, we define 
    \begin{align*}
        X_{\bot} ={}& X \cap (\bag(c_1) \cup \bag(c_2)), \quad
        S_1 ={} \bag(c_1) \cap (X \cup \Reach{D \setminus X}(r)), \\
        S_2 ={}& \bag(c_2) \cap (X \cup \Reach{D \setminus X}(r)), \quad
        f(p) ={} \RIGSTwo{r}{X_{\bot}}{S_1}{S_2}, \\
        g(f(p),v) ={}& 
        \begin{cases}
            S_1 & \text{if } v \in V_{\bot}(\topS(c_1)) \cap \Reach{D \setminus X}(r) \\
            S_2 & \text{if } v \in V_{\bot}(\topS(c_2)) \cap \Reach{D \setminus X}(r) \setminus V_{\bot}(\topS(c_1)).
        \end{cases}
    \end{align*}
    For $v \in \Reach{D \setminus X}(r)$, if $v \in V_{\bot}(\topS(c_1))$, we designate $c_1$ as the cop choice vertex for $v$ under $f(p)$, otherwise we designate $c_2$ as the cop choice vertex for $v$ under $f(p)$.
    While $g$ and thus the cop choice vertices depends on which child is designated as $c_1$, this choice can be made arbitrarily as the construction works independently of how that choice is made.
    

    To find the forward facing vertices, we first consider a simple construction.
    For $t \in V(T), r \in V(D)$ and $X \in \Separators$ with $V_{\bot}(\botS(t)) \supseteq \Reach{D \setminus X}(r)$ we define $T_{t,r,X}$ to be the induced subgraph of $T$ on the set $\{w \in \Reach{T}(t) \mid V_{\bot}(\botS(w)) \supseteq \Reach{D \setminus X}(r) \}$.
    This graph is non-empty as it always contains $t$ and it is connected as the bottom separations for any path in $T$ are laminar and thus, for every path from $t$ to another vertex $w$ with $V_{\bot}(\botS(w)) \supseteq \Reach{D \setminus X}(r)$, every vertex $v$ on that path has to fulfill $V_{\bot}(\botS(w)) \supseteq \Reach{D \setminus X}(r)$ as well.  
    We now lead an induction to find a forward facing vertex for every cop initiative game state in a play in which all previous cop initiative game states have forward facing vertices. 
    
    Let $p_s=\CIGSex{\emptyset}{v} \in \CIGS$ be a starting game state. 
    As $V_{\bot}(\botS(s)) = V(D) \supseteq \Reach{D}(v)$, we can construct $T_{s,v,\emptyset}$.
    We choose any sink $x$ of $T_{s,v,\emptyset}$ to be the forward-facing vertex for $\CIGSex{\emptyset}{v}$.
    
    
    Let $p=\CIGSex{X}{v} \in \CIGS$ be a non-starting game state in a play $\pi$ such that $p_{-2} \in \CIGS$ is the last cop initiative game state before $p$ in $\pi$, $p_{-1} = f(p_{-2}) \in \RIGS$ is the robber initiative game state directly before $p$ in $\pi$ and if $p_{-1} \in \RIGS_2$, then $X = g((p_{-1},v))$.
    Let $t$ be the cop choice vertex for $v$ under $p_{-1}$.
    Note that there exists a cop choice vertex for every game state the robber can move to.
    Then, by definition, of the cop choice vertices we know that $V_{\bot}(\botS(t)) \supseteq \Reach{D \setminus X}(v)$.
    So we can construct $T_{t,v,X}$.
    Again we choose any sink $x$ of $T_{t,v,X}$ to be the forward-facing vertex for $\CIGSex{X}{v}$, which satisfies \cref{item:FF:X_is_above,item:FF:x_blocks,item:FF:Reach_r_under,item:FF:children_intersect}.
    
    Since we can assign a forward facing vertex to every cop initiative game state $p$ consistent with $(f, g)$ and define $f(p)$ for all cop initiative game states consistent with $(f, g)$, we obtain that $(f, g)$ is complete and thus there exists a legal move after every cop initiative game state, meaning that no play consistent with $(f, g)$ ends on a cop initiative game state.
    By~\cref{finitePlay} every play is finite, therefore $(f, g)$ is a winning strategy for the cop player.
\end{proof}

\appendixproof{theoDec->Str}{
\begin{proof}
We assume there exists an $\Separations$-DAG of width $k$ for $D$.
By \cref{niceExists}, there also exists a nice $\Separations$-DAG of width $k$ for $D$.
We show that from a nice $\Separations$-DAG of width $k$ for $D$ we can build a winning strategy $(f, g)$ for the cop player in the $\Separations$-DAG cops-and-robber game of size $k$ on $D$.

Let $(T, \sigma)$ be a nice $\Separations$-DAG of width $k$ for $D$.
We construct a complete strategy $(f, g)$ by iteratively designating a\define{forward-facing vertex in $T$} for all cop initiative game states consistent with our strategy. 
We use these forward facing vertices to identify a following robber initiative game state.
We also designate a\define{cop choice vertex for $v$ under $p$ in $T$} for each robber initiative game state $p = \RIGSSpe{r}{X}{P}$ and vertex $v \in V(D)$, where $(f,g) \leadsto p$ and $v \in \Reach{D \setminus X}(r)$.
The cop choice vertex is used to find a forward facing vertex for $p' = \CIGSex{S}{v}$ where $S \in P$.
This process is illustrated in~\cref{fig:FFAndCC}.

A forward-facing vertex $x$ for a game state $\CIGSex{X}{r} \in \CIGS$ has to fulfill the following properties: 
\begin{enamerate}{FF}{item:FF:lastapp}{3em}
	\item 
		$X \subseteq V_A(\botS(x))$. -- ``covering''
	\item 
		$\Reach{D \setminus (X \cap \bag(x))}(r) = \Reach{D \setminus X}(r)$. -- ``blocking''
	\item 
		$\Reach{D \setminus X}(r) \subseteq V_{\bot}(\botS(x))$. -- ``forward facing''
	\item 
		$\Reach{D \setminus X}(r) \cap \bag(c) \neq \emptyset$ for all $c \in \outN{T}{x}$. -- ``outreaching''
	\label{item:FF:lastapp}
\end{enamerate}

A cop choice vertex $t$ for $v$ under $p = \RIGSSpe{r}{X_{\bot}}{P} \in \RIGS$ has to fulfill the property that there exists an $S \in P$ with $\Reach{D \setminus S}(v) \subset \Reach{D \setminus X}(r)$, $\Reach{D \setminus S}(v) \subseteq V_{\bot}(\botS(t))$ and $S \subseteq V_A(\botS(t))$.

\begin{figure}[!ht]
\centering
\begin{tikzpicture}[scale=0.6,>={Stealth[width=3mm,length=2mm]}]
    \node[draw=gray, circle,minimum size=1cm] (w1) at (-2,-2) { };
    \node[draw=gray, circle,minimum size=1cm] (w2) at (3,-2) { };
    \node[draw=gray, circle,minimum size=1cm] (t7) at (5,-4) { };
    \node[draw, circle,minimum size=1cm,draw=blue] (t1) at (-4,-4) {$\hat{f}$};
    \node[draw=gray, circle,minimum size=1cm] (t2) at (0,-4) { };
    \node[draw=gray, circle,minimum size=1cm] (t4) at (-6,-6) { };
    \node[draw, circle,minimum size=1cm, color=red, fill=lightgreen] (t) at (-2,-6) {$t$};
    \node[draw=gray, circle,minimum size=1cm] (t6) at (2,-6) { };
    \node[draw, circle,minimum size=1cm, draw=green, fill=lightgreen] (g1) at (-4,-8) { };
    \node[draw, circle,minimum size=1cm, draw=green, fill=lightgreen] (g2) at (0,-8) { };
    \node[draw, circle,minimum size=1cm, draw=green, fill=lightgreen] (g3) at (-6,-10) { };
    \node[draw, circle,minimum size=1cm, draw=magenta, fill=lightgreen] (c1) at (-2,-10) {$\hat{f}'_1$};
    \node[draw, circle,minimum size=1cm, draw=magenta, fill=lightgreen] (c2) at (-4,-12) {$\hat{f}'_2$};
    \node[draw=gray, circle,minimum size=1cm] (o1) at (0,-12) { };
    \node[draw=gray, circle,minimum size=1cm] (o3) at (-2,-13) { };
    \node[draw=gray, circle,minimum size=1cm] (o2) at (-4,-14.5) { };
    \node[draw=gray, circle,minimum size=1cm] (t8) at (2,-10) { };
    \node[draw=gray, circle,minimum size=1cm] (t9) at (5,-8) { };
    \node[draw=gray, circle,minimum size=1cm] (t10) at (5,-10) { };
    \path[->, color=gray]
    (c1) edge (o1)
    (c2) edge (o2)
    (c1) edge (o3)
    (g2) edge (t8)
    (t6) edge (t8)
    (w2) edge[bend right= 10] (c1)
    (w1) edge (t1)
    (w2) edge (t7)
    (t7) edge (t6)
    (t1) edge (t4)
    (t2) edge (t)
    (w1) edge (t2)
    (t2) edge (t6)
    (t8) edge (o1)
    (t7) edge (t9)
    (t9) edge (t10)
    (t10) edge (o1);
    \path[->, color=red]
    (t1) edge (t);
    \path[->, color=green]
    (t) edge (g1)
    (t) edge (g2)
    (g1) edge (g3)
    (g1) edge (c1)
    (g2) edge (c1)
    (g3) edge (c2);
\end{tikzpicture}
    \caption{This figure illustrates the process of finding a forward facing vertex for a game state $p=\CIGSex{X}{r}$ from the cop choice vertex $t$ for $r$ under $p_{-1}$. The red circled cop choice vertex $t$ is a child of the forward facing vertex $\hat{f}$(in blue) of the last cop initiative game state $p_{-2}$. In green is the DAG $T_{t,r,X}$, which contains all vertices $v$ starting from $t$ which lie above $\Reach{D \setminus X}(r)$ meaning that $\Reach{D \setminus X}(r) \subseteq V_{\bot}(\botS(v))$. All sinks of this DAG are marked in pink. These sinks $\hat{f}'_1$ and $\hat{f}'_2$ are the possible choices for the forward-facing vertex for $p$.}
    \label{fig:FFAndCCapp}
\end{figure}

Given the forward-facing vertex $x$ for a game state $p =
\CIGSex{X}{r} \in \CIGS$, we define $f(p)$ and $g((p,v))$ for $v \in \Reach{D \setminus X}(r)$ as well as the following cop choice vertices.
If $x$ has one child $c$, the cop player chooses to move to $X' = \bag(c) \cap (X \cup \Reach{D \setminus X}(r))$ resulting in the value $f(p) = \RIGSOne{r}{X_{\bot}}{X'}$ where $X_{\bot}=X \cap \bag(c)$.
For $v \in \Reach{D \setminus X}(r) \setminus X'$, we also designate $c$ as the cop choice vertex for $v$ under $f(p)$.
The game state $f(p)$ is a legal move from $p$, because
$\bag(c) \cap (X \cup \Reach{D \setminus X}(r)) \subseteq \bag(c)$ and thus it is in $\Separators$. We also calculate $X_{\bot}$ as $X_{\bot} = X' \cap X = (\bag(c) \cap (X \cup \Reach{D \setminus X}(r))) \cap X = \bag(c) \cap X$.
We consider the conditions:
\begin{itemize}
    \item \ref{item:C1:realProgress}: By definition of a forward facing vertex $\Reach{D \setminus X}(r) \cap \bag(c) \neq \emptyset$ and thus 
    \begin{align*}
        X' \setminus X ={}& (\bag(c) \cap (X \cup \Reach{D \setminus X}(r))) \setminus X \\
        ={}& \bag(c) \cap \Reach{D \setminus X}(r) \setminus X \\
        ={}& \bag(c) \cap \Reach{D \setminus X}(r) \\
        \neq& \emptyset.
    \end{align*} 
    \item \ref{item:C1:botBlocks}:
    \begin{align*}
    \Reach{D \setminus X}(r)
    \stackrel{\ref{item:FF:x_blocks}}{=} {}& \Reach{D \setminus (X \cap \bag(x))}(r) \\
    \stackrel{(1)}{=} {}& \Reach{D \setminus (X \cap \bag(x) \cap \bag(c))}(r) \\
    \stackrel{(2)}{=} {}& \Reach{D \setminus (X \cap \bag(c))}(r) \\
    ={}& \Reach{D \setminus (X_{\bot})}(r).
    \end{align*}
    As $c$ is a child of $x$, it follows from \ref{item:FF:X_is_above} that 
    \begin{equation*}
    \bag(c) \cap V_A(\botS(x)) \subseteq \bag(x) \cap V_A(\botS(x)) = \bag(x)
    \end{equation*}
    which shows the correctness of equality $(2)$.
    The equality $(1)$ follows from the fact that there exist no edges from $V_{\bot}(\botS(x))$ to $V_{\top}(\botS(x))$, while $\Reach{D \setminus X}(r) \subseteq V_{\bot}(\botS(x))$ and $X \setminus \bag(c) \subseteq V_{\top}(\topS(c)) = V_{\top}(\botS(x))$.
    \item \ref{item:C1:progressInReach}:
    \begin{align*}
X' \setminus X_{\bot} ={}& (\bag(c) \cap (X \cup \Reach{D \setminus X}(r))) \setminus (X \cap \bag(c)) \\ 
 ={}& (\bag(c) \cap \Reach{D \setminus X}(r)) \setminus X \\ 
    \subseteq {}& \Reach{D \setminus X}(r).
    \end{align*}
\end{itemize}
By definition $X' \subseteq \bag(c) = V_{S}(\botS(c)) \subseteq V_{A}(\botS(c))$.
By \ref{item:FF:children_intersect}, the intersection between $\Reach{D \setminus X}(r)$ and $X'$ is non-empty. Since $\Reach{D \setminus X}(r) \setminus X' \subseteq V_{\bot}(\botS(c))$, all outgoing edges from $\Reach{D \setminus X}(r) \setminus X'$ end in $X' \cup V_{B}(\botS(c)) \subseteq V_{B}(\topS(c))$. Since, by definition all outgoing edges from $\Reach{D \setminus X}(r) \setminus X'$ end in $X' \cup X$, it follows that all outgoing edges from $\Reach{D \setminus X}(r) \setminus X'$ end in $(X' \cup X) \cap V_{B}(\topS(c)) = X'$. Since $v \in \Reach{D \setminus X}(r) \setminus X'$, $\Reach{D \setminus X'}(v) \subset \Reach{D \setminus X}(r)$ and $\Reach{D \setminus X'}(v) \subseteq V_{\bot}(\botS(c))$, we know that $c$ fulfills the condition to be the cop choice vertex for $v$ under $f(p)$.

If $x$ has two children $c_1, c_2$, we define 
\begin{align*}
    X_{\bot} ={}& X \cap (\bag(c_1) \cup \bag(c_2)) \\
    S_1 ={}& \bag(c_1) \cap (X \cup \Reach{D \setminus X}(r)) \\
    S_2 ={}& \bag(c_2) \cap (X \cup \Reach{D \setminus X}(r)) \\
    f(p) ={}& \RIGSTwo{r}{X_{\bot}}{S_1}{S_2} \\
    g(f(p),v) ={}& 
    \begin{cases}
        S_1 & \text{if } v \in V_{\bot}(\topS(c_1)) \cap \Reach{D \setminus X}(r) \\
        S_2 & \text{if } v \in V_{\bot}(\topS(c_2)) \cap \Reach{D \setminus X}(r) \setminus V_{\bot}(\topS(c_1)).
    \end{cases}
\end{align*}
For $v \in \Reach{D \setminus X}(r)$, if $v \in V_{\bot}(\topS(c_1))$, we designate $c_1$ as the cop choice vertex for $v$ under $f(p)$, otherwise we designate $c_2$ as the cop choice vertex for $v$ under $f(p)$.
While the definition of $g$ and as an extension of that the definition of the cop choice vertices depends on which child is designated as $c_1$, this choice can be made arbitrarily as the construction works independently of how that choice is made.
As before we show that $f(p)$ is a legal move from $p$ and that the cop choice vertices are chosen appropriately. We also show that $(g((f(p),v)),v)$ is a legal move from $f(p)$. The sets chosen for $S_{i,1}$ and $S_{i,2}$ are subsets of $\bag(c_1)$ and $\bag(c_2)$ respectively and thus they are in $\Separators$. We can also calculate $X_{\bot}$ as 
\begin{align*}
        & X \cap (\bag(c_1) \cup \bag(c_2)) \\
        ={}& X \cap (X \cap (\bag(c_1) \cup \bag(c_2))) \\
        ={}& X \cap ((X \cap \bag(c_1)) \cup (X \cap \bag(c_2))) \\
        \stackrel{(1)}{=} {}& X \cap ((\Reach{D \setminus X}(r) \cap (\bag(c_2) \cup \bag(c_1))) \cup ((X \cap \bag(c_1)) \cup (X \cap \bag(c_2)))) \\
        ={}&X \cap (((\Reach{D \setminus X}(r) \cup X) \cap \bag(c_1)) \cup ((\Reach{D \setminus X}(r) \cup X) \cap \bag(c_2))) \\
        ={}& X \cap (S_1 \cup S_2) \\
        ={}& X_{\bot}.
\end{align*}
The equality $(1)$ follows from $X \cap (\Reach{D \setminus X}(r) \cap (\bag(c_2) \cup \bag(c_1))) \subseteq X \cap \Reach{D \setminus X}(r) = \emptyset$. 
We consider the conditions:
\begin{itemize}
    \item \ref{item:C2:containIntersection}:
    This follows from the fact that $\botS(x)$ is the uncrossing of $\topS(c_1)$ and $\topS(c_2)$ and thus $V_{S}(\botS(x)) \supseteq V_{S}(\topS(c_1)) \cap V_{S}(\topS(c_1)) = \bag(c_1) \cap \bag(c_1)$. Since $V_{S}(\botS(x)) \cap \Reach{D \setminus X}(r) = \emptyset$ it follows that $\bag(c_1) \cap \bag(c_1) \cap \Reach{D \setminus X}(r) = \emptyset$.
    \begin{align*}
        S_1 \cap S_2 ={}& (\bag(c_1) \cap (X \cup \Reach{D \setminus X}(r))) \cap (\bag(c_2) \cap (X \cup \Reach{D \setminus X}(r))) \\ 
        ={}& \bag(c_1) \cap \bag(c_2) \cap (X \cup \Reach{D \setminus X}(r)) \\
        ={}& \bag(c_1) \cap \bag(c_2) \cap X \\
        \subseteq {}& X \cap (\bag(c_1) \cup \bag(c_2)) \\
        ={}& X_{\bot}
    \end{align*}
    \item \ref{item:C2:botBlocks}:
    Notice that by definition of $\Reach{D \setminus X}(r)$ all outgoing edges from the set go to vertices in $X$. Also note that $\Reach{D \setminus X}(r) \subseteq V_{\bot}(\botS(x))$ while $X \subseteq V_{A}(\botS(x))$ and thus all outgoing edges from $\Reach{D \setminus X}(r)$ go into $V_{S}(\botS(x)) \cap X$.
    \begin{align*}
    \Reach{D \setminus X}(r) ={}& \Reach{D \setminus (V_{S}(\botS(x)) \cap X)}(r) \\
        \stackrel{(1)}{=} {}& \Reach{D \setminus (X \cap (\bag(c_1) \cup \bag(c_2)))}(r) \\
        ={}& \Reach{D \setminus X_{\bot}}(r)
    \end{align*}
    The equality $(1)$ is correct as $\botS(x) = \topS(c_1) \wedge \topS(c_2)$ and it follows that, since $X$ is in $V_{A}(\topS(c_1))$ and $V_{A}(\topS(c_2))$, every vertex in $X$ and $V_{B}(\topS(c_1))$ or $V_{B}(\topS(c_2))$ is in $V_{S}(\botS(x))$.
    \item \ref{item:C2:realProgress} and \ref{item:C2:separatorPartBelow}:
    For $i \in \{1,2\}$ and $w \in S_i \setminus X_{\bot}$, since $X_{\bot} = X \cap (S_1 \cup S_2)$ and $S_i \subseteq X \cup \Reach{D \setminus X}(r)$, we know that $w \in \Reach{D \setminus X}(r)$.
    
    Let $w \in \Reach{D \setminus X}(r)$. 
    \begin{align}
        w \in{}& \Reach{D \setminus X}(r) \nonumber \\
        \subseteq{}& V_{\bot}(\botS(x)) \label{thisIsW} \\
        ={}& V_{\bot}(\botS(c_1)) \cup V_{\bot}(\botS(c_2)) \nonumber
    \end{align}
    Let $j \in \{1,2\}$ be chosen such that $w \in V_{\bot}(\botS(c_j))$. By \ref{item:FF:children_intersect} $\Reach{D \setminus X}(r) \cap \bag(c_j) \neq \emptyset$. It follows that $\Reach{D \setminus X}(r) \cap S_j \neq \emptyset$ and thus $\Reach{D \setminus S_j}(w) \neq \Reach{D \setminus X}(r)$. $\Reach{D \setminus S_j}(w) =  \Reach{D \setminus (V_{S}(\topS(c_j)) \cap X)}(w) \subseteq \Reach{D \setminus X}(r)$ since $X \subset V_A(\topS(c_j))$ and all outgoing edges from $\Reach{D \setminus X}(r)$ in $D$ are to $X$. Since $\Reach{D \setminus S_j}(w) \subseteq \Reach{D \setminus X}(r)$ and $\Reach{D \setminus S_j}(w) \neq \Reach{D \setminus X}(r)$, it follows that $\Reach{D \setminus S_j}(w) \subset \Reach{D \setminus X}(r)$.
\end{itemize}

As all conditions are fulfilled, $f(p)$ is a legal move for the cop player. We proceed by considering $g((f(p),v),v)$. As outlined in \cref{thisIsW}, for all $v \in \Reach{D \setminus X}(r)$ there exists a cop choice vertex $c_j \in \{c_1,c_2\}$ such that $v \in V_{\bot}(\botS(c_j))$. Thus,  $g((f(p),v))$ is defined for all $v \in \Reach{D \setminus X}(r)$. Following \cref{thisIsW} we can also assert that $\Reach{D \setminus S_j}(w) \subset \Reach{D \setminus X}(r)$ for $v \in \Reach{D \setminus X}(r) \cap V_{\bot}(\botS(c_j))$. $g((f(p),v),v)$ is a legal move since $V_{\bot}(\botS(c_j)) \cap S_j = \emptyset$. $\Reach{D \setminus S_j}(w) \subseteq V_{\bot}(\botS(c_j))$ since $v \in V_{\bot}(\botS(c_j))$ and by definition $S_j \subseteq \bag(c_j) \subseteq V_A(\botS(c_j))$. Thus, $c_j$ fulfills the conditions to be the cop choice vertex for $w$ under $f(p)$.

To find the forward facing vertices, we first consider a simple construction. For $t \in V(T), r \in V(D)$ and $X \in \Separators$ with $V_{\bot}(\botS(t)) \supseteq \Reach{D \setminus X}(r)$ we define $T_{t,r,X}$ to be the induced subgraph of $T$ on the set $\{w \in \Reach{T}(t) | V_{\bot}(\botS(w) \supseteq \Reach{D \setminus X}(r) \}$. This graph is non-empty as it always contains $t$ and it is connected as the bottom separations for any path in $T$ are laminar and thus, for every path from $t$ to another vertex $w$ with $V_{\bot}(\botS(w)) \supseteq \Reach{D \setminus X}(r)$, every vertex $v$ on that path has to fulfill $V_{\bot}(\botS(w)) \supseteq \Reach{D \setminus X}(r)$ as well.  
We now lead an induction to find a forward facing vertex for every cop initiative game state in a play in which all previous cop initiative game states had forward facing vertices. 

IB:
Let $p_s=\CIGSex{\emptyset}{v} \in \CIGS$ be a starting game state. 
As $V_{\bot}(\botS(s)) = V(D) \supseteq \Reach{D}(v)$, we can construct $T_{s,v,\emptyset}$.
Let $x$ be any sink of $T_{s,v,\emptyset}$.
We define $x$ to be the forward-facing vertex for $\CIGSex{\emptyset}{v}$.
$x$ satisfies \ref{item:FF:X_is_above} as $\emptyset \subseteq V_A(\botS(x))$.
$x$ satisfies \ref{item:FF:x_blocks} as $\emptyset = \emptyset \cap \bag(x)$.
$x$ satisfies \ref{item:FF:Reach_r_under} as $x$ lies in $T_{s,v,\emptyset}$ and by definition all vertices in $T_{s,v,\emptyset}$ satisfy \ref{item:FF:Reach_r_under}.
$x$ is a sink in $T_{s,v,\emptyset}$ meaning that none of its children $c$ satisfy the condition $V_{\bot}(\botS(c)) \supseteq \Reach{D}(v)$. Since $V_{\bot}(\botS(x)) \supseteq \Reach{D}(v)$, it follows that $\bag(c)$ intersects $\Reach{D}(v)$.
Thus $x$ satisfies \ref{item:FF:children_intersect}.

IH:
Let $p_{-2} \in \CIGS$ be a cop initiative game state in a play such that there exists a designated forward-facing vertex $x_{-2}$ for $p_{-2}$.

IS:
Let $p=\CIGSex{X}{v} \in \CIGS$ be a non-starting game state in a play $\pi$ such that $p_{-2} \in \CIGS$ is the last cop initiative game state before $p$ in $\pi$, $p_{-1} = f(p_{-2}) \in \RIGS$ is the robber initiative game state directly before $p$ in $\pi$ and if $p_{-1} \in \RIGS_2$, then $X = g((p_{-1},v))$. Let $t$ be the cop choice vertex for $v$ under $p_{-1}$. Note that there exists a cop choice vertex for every game state the robber can move to.
Then by definition of the cop choice vertices we know that $V_{\bot}(\botS(t)) \supseteq \Reach{D \setminus X}(v)$. So we can construct $T_{t,v,X}$. Let $x$ be any sink of $T_{t,v,X}$. We define $x$ to be the forward-facing vertex for $\CIGSex{X}{v}$.
$x$ satisfies \ref{item:FF:X_is_above} as $X \subseteq V_A(\botS(t)) \subseteq V_A(\botS(x))$.
$x$ satisfies \ref{item:FF:x_blocks} as by definition of $T_{t,v,X}$, it holds that $\Reach{D \setminus X}(v) \subseteq V_{\bot}(\botS(x))$ and thus all outgoing edges from $\Reach{D \setminus X}(v)$ end in $X \cap V_{S}(\botS(x)) \subseteq X \cap \bag(x)$.
$x$ satisfies \ref{item:FF:Reach_r_under} as $x$ lies in $T_{t,v,X}$ and by definition all vertices in $T_{t,v,X}$ satisfy \ref{item:FF:Reach_r_under}.
$x$ is a sink in $T_{t,v,X}$ meaning that none of its children $c$ satisfy the condition $V_{\bot}(\botS(c)) \supseteq \Reach{D \setminus X}(v)$. Since $V_{\bot}(\botS(x)) \supseteq \Reach{D}(v)$, it follows that $\bag(c)$ intersects $\Reach{D \setminus X}(v)$.
Thus $x$ satisfies \ref{item:FF:children_intersect}.

Since we can assign a forward facing vertex to every cop initiative game state $p$ consistent with $(f, g)$ and define $f(p)$ for all cop initiative game states consistent with $(f, g)$, we know that $(f, g)$ is complete and thus there exists a legal move after every cop initiative game state, meaning that no play consistent with $(f, g)$ ends on a cop initiative game state.
Since every play is finite by \cref{finitePlay}, $(f, g)$ is a winning strategy for the cop player.
\end{proof}}

\begin{theorem}
The cop player has a winning strategy in the $\Separations$-DAG cops-and-robber game of size $k$ on $D$ if and only if there exists an $\Separations$-DAG of width $k$ for $D$.
\end{theorem}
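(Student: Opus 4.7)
The plan is to observe that this equivalence is already essentially established by the two preceding results, so the proof reduces to simply citing them. For the forward implication I would invoke \cref{theoStr->Dec}, which shows that given a winning strategy for the cop player in the $\Separations$-DAG cops-and-robber game of size $k$ one can construct an $\Separations$-DAG of width $k$ for $D$; the construction there assembles vertices $t_{X,r}$ representing reachable cop initiative game states, uses the cop choice sets $\CopChoiceSet{X,S,r}$ to assign separations to edges, and splices in auxiliary binary trees to respect the out-degree two constraint.

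For the backward implication I would invoke \cref{theoDec->Str}, which first upgrades any $\Separations$-DAG of width $k$ to a nice one using \cref{niceExists}, and then reads off a winning strategy for the cop player by inductively designating forward-facing vertices and cop choice vertices along plays consistent with the constructed strategy.

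Since the two implications together form a biconditional, the theorem follows immediately. There is no genuine obstacle here beyond ensuring that the width parameter matches on both sides, which it does because both translations preserve the size of the largest bag/cop position. I would therefore present the proof as a short paragraph explicitly combining \cref{theoStr->Dec,theoDec->Str}.

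\begin{proof}
The forward direction is~\cref{theoStr->Dec} and the backward direction is~\cref{theoDec->Str}.
\end{proof}
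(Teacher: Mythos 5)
Your proposal is correct and matches the paper exactly: the paper also proves this theorem by directly combining \cref{theoStr->Dec} (strategy to $\Separations$-DAG) and \cref{theoDec->Str} ($\Separations$-DAG to strategy). Nothing further is needed.
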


\begin{proof}
    This follows directly from \cref{theoStr->Dec} and \cref{theoDec->Str}.
\end{proof}

\begin{lemma}\ifshort{[\appsymb]}{}\else{}\fi
    There is no play $\pi$ of the $\Separations$-DAG cops-and-robber game which contains the same game state twice.
    \label{noStateTwice}
\end{lemma}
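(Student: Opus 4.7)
The plan is to split by game state type and leverage the strict decrease of the robber's reach that is shown inside the proof of~\cref{finitePlay}. For cop initiative states the statement is essentially immediate: that proof establishes that along the subsequence of cop initiative states $(X_0, r_0), (X_1, r_1), \ldots$ of a play, the reach $\Reach{D \setminus X_\ell}(r_\ell)$ strictly decreases from one cop state to the next. Hence these reach sets are pairwise distinct, and so the cop initiative states themselves are pairwise distinct.

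The more interesting case is the robber initiative states. Here I would argue by contradiction: suppose $(r, (X_{\bot}, P))$ occurs at two distinct positions $i < j$ of a play $\pi$. By the transition rules in~\cref{def:Scr_game}, every robber initiative state is directly preceded by a cop initiative state that shares its robber position (inspect the outputs of~\cref{item:Mov:C1} and~\cref{item:Mov:C2}), so the game states at positions $i-1$ and $j-1$ are of the form $(X^{(i)}, r)$ and $(X^{(j)}, r)$ with the same second coordinate~$r$. Invoking~\cref{item:C1:botBlocks} in the $\RIGS_1$ case and~\cref{item:C2:botBlocks} in the $\RIGS_2$ case, the legality of the cop move that produced $(r, (X_{\bot}, P))$ forces
\[
\Reach{D \setminus X^{(i)}}(r) \;=\; \Reach{D \setminus X_{\bot}}(r) \;=\; \Reach{D \setminus X^{(j)}}(r).
\]
But $i-1$ and $j-1$ are distinct positions in the cop initiative subsequence of $\pi$, which contradicts the strict monotonicity of the reach from the first part of the argument.

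The main obstacle is simply identifying what information about the preceding cop state is recoverable from the robber state; once it is observed that the reach $\Reach{D \setminus X}(r)$ of the preceding cop state is pinned down entirely by the robber state's parameters $r$ and $X_{\bot}$, the reduction to the cop case is a one-line contradiction with the strict-decrease property and no further combinatorial work is needed. The very first cop state $(\emptyset, r_0)$ requires no separate treatment, since by definition a play opens with a cop initiative state, so every robber initiative state in $\pi$ has a well-defined predecessor.
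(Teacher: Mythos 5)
Your proposal is correct and follows essentially the same route as the paper: both arguments rest on the strict decrease of the robber's reach between consecutive cop initiative states established in the proof of~\cref{finitePlay}, together with the fact that conditions~\ref{item:C1:botBlocks}/\ref{item:C2:botBlocks} tie the reach of a robber initiative state to that of the cop initiative state adjacent to it. The only cosmetic difference is that you handle repeated robber initiative states by pulling back to their preceding cop states, whereas the paper observes directly that the reach also strictly decreases along the subsequence of robber initiative states; the underlying mechanism is identical.
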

\appendixproof{noStateTwice}{
\begin{proof}
    Assume toward contradiction, that there exists a play $\pi$ of the $\Separations$-DAG cops-and-robber game which contains the same position twice. As laid out in \cref{proFinitePlay}, the reach of the robber in consecutive cop initiative game states is strictly decreasing. Since the reach of the robber cannot increase when moving between a cop initiative game state and a robber initiative game state, the reach of the robber between consecutive robber initiative game states is also strictly decreasing. This contradicts the existence of $\pi,$ since containing the same position twice would mean that there are two game states of the same type with the same reach for the robber in $\pi,$ which contradicts the reach of the robber decreasing strictly.
\end{proof}}

\begin{theorem}
Given a digraph $D,$ a winning strategy for the $\Separations$-DAG cops-and-robber game of size $k,$ can be computed in $O(|V(D)|^{h(k)})$ for a function $h.$
\label{compWinStr}
\end{theorem}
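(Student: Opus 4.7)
The plan is to reformulate the task as solving a standard reachability game on a finite arena of polynomial size, and then to apply backward induction.

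First, I would bound the size of the arena. Since $|\Separators| \in O(|V(D)|^{k})$, the three state types have sizes $|\CIGS| \in O(|V(D)|^{k+1})$, $|\RIGS_1| \in O(|V(D)|^{2k+1})$ and $|\RIGS_2| \in O(|V(D)|^{3k+1})$, all polynomial in $|V(D)|$ for fixed $k$. From each $\CIGS$ state there are at most $O(|V(D)|^{2k})$ candidate cop moves (via \cref{item:Mov:C1,item:Mov:C2}), and from each $\RIGS$ state at most $O(|V(D)|)$ robber moves. Validity of any tentative transition can be decided by a handful of reachability computations in subgraphs of $D$, each in time $O(|V(D)|^2)$.

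Second, I would invoke the well-foundedness established by \cref{noStateTwice}, together with the strict decrease of the robber's reach between consecutive cop initiative states (which follows from \cref{item:C1:botBlocks,item:C2:botBlocks}), to argue that the induced directed graph of legal moves is acyclic. This justifies backward induction.

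Third, I would compute the cop-winning region $W$ as the least fixed point under the natural reachability-game rules. Concretely: every robber initiative state from which the robber has no legal move is added first; a $\CIGS$ state is added once some cop move leads into $W$, and that move is recorded as $f$; a $\RIGS_1$ state is added once every robber move leads into $W$; and a $\RIGS_2$ state $\RIGSTwo{r}{X_\bot}{S_1}{S_2}$ is added once, for every $r' \in \Reach{D \setminus X_\bot}(r)$, some $j \in \{1,2\}$ satisfying \cref{item:Mov:R2} leads into $W$, and that $j$ is recorded as $g$. A winning strategy exists exactly when $\CIGSex{\emptyset}{r_0} \in W$ for all $r_0 \in V(D)$, in which case $(f,g)$ is one. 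Summing the per-transition work of $O(|V(D)|^2)$ over the $O(|V(D)|^{3k+2})$ candidate transitions yields a total running time in $O(|V(D)|^{3k+4})$, so $h$ may be taken linear.

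The main obstacle will be the bookkeeping for $\RIGS_2$ states, where $f$ and $g$ must be recorded simultaneously and the choice of $j$ for each robber-reachable $r'$ must be made coherently with the constraints of \cref{item:Mov:R2}. This is essentially the routine extraction of a positional strategy from a solved reachability game, and \cref{strictnessStrat} can be applied at the end to replace the strategy by a complete one if desired.
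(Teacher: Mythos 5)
Your proposal is correct and follows essentially the same route as the paper: build the explicit game graph on $\CIGS \cup \RIGS_1 \cup \RIGS_2$, bound its size polynomially in $|V(D)|$ for fixed $k$, use \cref{noStateTwice} to conclude acyclicity, and then determine the cop-winning states by backward induction while recording $(f,g)$. The only (immaterial) differences are presentational — you phrase the backward induction as a least-fixed-point computation and get $O(|V(D)|^{3k+4})$ rather than the paper's $O(|V(D)|^{3k+3})$ obtained by precomputing all reach sets — both of which still give a linear $h$.
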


\ifshort{\begin{proof}[Proofsketch]
    We build a game graph for the $\Separations$-DAG cops-and-robber game where there is an edge $(p,p')$ between two game states if $p'$ is a legal move from $p$. 
    The set of all game states has size $O(|V(D)|^{h(k)})$. Since no game state can be contained twice in the same play of a $\Separations$-DAG cops-and-robber game, the game graph is acyclic. We leverage this by figuring out for every game state which player it is winning for, starting with the sinks. This can be done in polynomial time as which player is winning from a position can be identified by simply checking which outgoing neighbours are winning for which player. Doing this we can also compute winning strategies for every game state by combining the winning moves at that game state with the winning strategies from potential follow up game states.
\end{proof}}{}\else{}\fi
\appendixproof{compWinStr}{
\begin{proof}
    Given a Digraph $D,$ we construct a graph $G_D = (\mathcal{G},M),$ representing the $\Separations$-DAG cops-and-robber game of size $k.$
    Herein $\mathcal{G}$ is the set of game states in the $\Separations$-DAG cops-and-robber game of size $k$ on $D.$ Furthermore, we define $M = \{(p,p')| p, p' \in \mathcal{G} $ and moving from $ p $ to $ p' $ is a legal move.$\}$ as the relation containing all possible moves in the game. Since $\mathcal{G}$ can be constructed directly by iteration and 
    \begin{equation*}
        |\mathcal{G}| = |\mathcal{CG}| + |\mathcal{RG}_1| + |\mathcal{RG}_2| = |V(D)|^{k+1} + |V(D)|^{2 \cdot k+1} + |V(D)|^{3 \cdot k+1} \in O(|V(D)|^{3 \cdot k+1}),
    \end{equation*}
    while every element in $\mathcal{G}$ has size at most $3 \cdot k+1,$ constructing $\mathcal{G}$ can be done in $O(|V(D)|^{3 \cdot k+1}  \cdot  k).$ 
    To construct $M$ we first calculate the reach for any combination of $v \in D$ and $X \in \Separators.$ Since calculating the reach for a given graph $D'$ and $v \in D'$ can be done in $O(|V(D')|+|E(D')|) \subseteq O(|V(D')|^2),$ calculating this for any such combination can be done in $O(|V(D)|^{k+1} \cdot |V(D)|^{2}) = O(|V(D)|^{k+3}).$
    We then need to check, for every game state, which game states the players can move to. 
    
    For a robber initiative game state $p = \RIGSSpe{X}{r}{P},$ the players can only move to a position $p' = \CIGSex{S}{v} \in \CIGS$ where $S \in P.$ Since $|P| \leq 2,$ there are at most $2 \cdot |V(D)|$ game states for which we have to check whether they are legal moves from $p.$ To check whether $p'$ is a legal move from $p$ the only requirements are that $v \in \Reach{D \setminus X}(r)$ and that $\Reach{D \setminus S}(v) \subset \Reach{D \setminus X}(r).$ This can be done in $O(|V(D)|)$ for both, yielding a total time complexity in $O(2 \cdot |V(D)| \cdot |V(D)| \cdot (|V(D)|^{2 \cdot k+1} + |V(D)|^{3 \cdot k+1})) \in O(|V(D)|^{3 \cdot k+3})$ to calculate all moves starting in robber initiative game states. 

    For a cop initiative game state $p = \CIGSex{X}{r},$ the cop player can move to a position $p' = \RIGSSpe{X_{\bot}}{r}{P} \in \CIGS$ where $P \in [\Separators]^{=1} \cup [\Separators]^{=2}.$
    Since $X_{\bot}$ is fully determined through $P$ and $X,$ this leaves less than $|V(D)|^{2 \cdot k}$ possible game states for which we have to check whether they are legal moves. The requirements to check whether $p'$ is a legal move from $p$ can be checked in $O(|V(D)^2),$ given that we have already calculated all relevant reach values. Thus, the time complexity of calculating all moves starting in cop initiative game states is in $O(|V(D)|^{2 \cdot k} \cdot |V(D)|^2 \cdot |V(D)|^{k+1}) \in O(|V(D)|^{3 \cdot k+3}).$ 
    This yields the result that the total time complexity to calculate $G_D$ is in $O(|V(D)|^{3 \cdot k+3}).$

    Since the reach of the robber is decreasing after every cop move, as laid out in the proof of \cref{noStateTwice}, no walk in $G_D$ can contain the same game state twice. It follows that $G_D$ does not contain cycles and thus is a DAG. We can leverage this to calculate a winning strategy for every vertex in $G_D$ if it exists because:

    \begin{itemize}
        \item Every sink of the graph is winning for the cop player if and only if it is a robber initiative game state with a trivial strategy.
        \item For cop initiative game states, they are winning if and only if the cop player can move to a game state which is winning for them. This can be calculated by checking whether there exists such a game state as an outgoing neighbour and expanding the winning strategy from that neighbour by adding the move to this neighbor.
        \item For robber initiative game states, to identify, whether the robber player is winning, we check whether there exists any position $p'$ the robber player can choose such that no matter which legal potential cop move the cop player plays, the resulting game state is winning for the robber player. If no such $p'$ exists, we construct a winning strategy for the cop player by choosing a winning position to move to for every potential robber choice and adding these moves to the winning strategies from all resulting follow-up game states.
    \end{itemize}
    Thus, by constructing the game graph and identifying the winning player for all game states, we can detect whether the cop player has a winning strategy by checking if all starting position are winning for the cop player. and if so we have a winning strategy for the cop player by combining the winning strategies from all starting positions.
\end{proof}}

\compSDAG*

\begin{proof}
    First, by using the construction for \cref{compWinStr}, we can derive a winning strategy $(f,g)$ for the cop player in the $\Separations$-DAG cops-and-robber game of size $k$ in $O(|V(D)|^{3 \cdot k + 3}).$
    Then by using the construction for \cref{theoStr->Dec}, we can construct an $\Separations$-DAG of width $k.$
    The computation time for this construction is $O(|G_D|),$ since the size of the $\Separations$-DAG built in steps~\cref{step:1,step:2,step:3} of the construction is bounded by the size of the game graph and the size of the construction in steps~\cref{step:4,step:5} is bounded by $O(|V(D)|^{2 \cdot k + 1} \cdot |V(D)|) \subseteq O(|G_D|).$
    Since every vertex, edge and $\sigma$ value of the $\Separations$-DAG can be calculated in $O(|V(D)|^2)$ this yields a total construction time in $O(|V(D)|^2 \cdot (|V(D)|^{3k+2})) = O(|V(D)|^{3 \cdot k + 4}).$
\end{proof}

\section{Solving Parity games}
\label{sec:ParGames}
\appendixsection{sec:ParGames}

Having taken the first hurdle toward the algorithmic applicability of $\Separations$-DAGs by proposing a way to calculate an $\Separations$-DAG of minimum width for a digraph $D,$ we proceed by proposing an algorithm for solving Parity games on $\Separations$-DAGs. For a digraph $D=(V,E)$ and parity game $P=(V,E,V_0,\Omega),$ we use \cref{compSDAG} and the construction in \cref{niceExists} to obtain a nice $\Separations$-DAG $(T, \sigma)$ of width $k$ for $D$ with size in $O(|V(D)|^{h(k)})$ in $O(|V(D)|^{h(k)})$ time.
The algorithm we propose uses the algorithm for parity games on DAG decompositions from \cite{berwanger2006dag} to handle the bulk of the computation.

Since $X_d$ represents the bag of a vertex $d$ in DAG decompositions, we define $X_d = \bag(d)$ for $d \in V(T)$ as well as its derivatives $X_{\succcurlyeq d} = \bigcup_{d \preccurlyeq_T d'} X_{d'}$ and $V_d = X_{\succcurlyeq d} \setminus X_d.$
The central data structure for the algorithm on DAG decompositions is $\textit{Frontier}_{P}(t)$ where $t$ is a vertex in the decomposition. This structure contains for all combinations of starting vertex $v$ and Even player strategy $f$, a tuple containing $v$ and all Odd-optimal outcomes of plays consistent with $f$ when our scope is restricted to $V_t$.
Note that the strategies restricted to $V_t$ may result in infinite plays in $V_t$ or partial plays ending in $X_t$.

Let $R_{P, t, opt}$ be the set of tuples $(v,f)$ such that $v \in V_t,$ $f$ is a strategy and $f$ is optimal for the Even player in $V_t$ when starting in $v$ with respect to a suitable measure.
For our purposes we consider an adjusted version of $\textit{Frontier}_{P}(t),$ called $\textit{Frontier}_{P, R_{P, t, opt}}(t)$, which only contains results of strategies for the Even player that are in $R_{P, t, opt}$ and we calculate this structure for every vertex in $T$.
\ifshort{Concrete definitions for the structures used here and the proof for why we can restrict our focus to only specific strategies can be found in the appendix.}{}\else{}\fi

\toappendix{
To prove the correctness of our approach we transfer notation and data structures used by \cite{berwanger2006dag} to $\Separations$-DAGs. We then proceed by showing how these adapted data structures can be computed in $\Separations$-DAGs.

Since $X_d$ represents the bag of a vertex $d$ in DAG decompositions, we define $X_d = \bag(d)$ for $d \in V(T)$ as well as its derivatives $X_{\succcurlyeq d} = \bigcup_{d \preccurlyeq_T d'} X_{d'}$ and $V_d = X_{\succcurlyeq d} \setminus X_d.$ We also use the relation $\sqsubseteq$ as a linear ordering of the priorities by how favourable they are to Even. Thus $i \sqsubseteq j$ if
\begin{itemize}
    \item $i$ is odd and $j$ is even or
    \item $i$ and $j$ are even and $i \geq j$ or
    \item $i$ and $j$ are odd and $i \leq j.$
\end{itemize}
Furthermore, we need data structures which represent what happens when the Even player and the Odd player play according to specific strategies.

First, for a partial play $\pi$ of a parity game $P,$ we define the outcome of $\pi,$ written $\textit{out}_{P}(\pi) \in \{\textit{winOdd}, \textit{winEven}\} \cup \{(v,p) \mid v \in V(P), p \in \omega\},$ as follows
\begin{align*}
    \textit{out}_{P}(\pi) ={} \begin{cases}
                            \textit{winOdd} & \text{if } \pi \text{ is a play and is winning for Odd.} \\
                            \textit{winEven} & \text{if } \pi \text{ is a play and is winning for Even.} \\
                            (v_{i},p) & \text{if } \pi = v_0, . . . , v_i \text{ and } p = \text{min}\{\Omega(v_j) \mid j \leq i\}.
                         \end{cases} 
\end{align*}
We consider this the outcome of $\pi$ as in case $\pi$ has a winner it records that winner and in case the play is finite, it contains the relevant information for further analysis of the winner, namely the ending vertex and the lowest priority occurring within $\pi$ as this dictates which player would win if this part of the play were to occur infinitely often.
For these outcomes we use a partial ordering $\trianglelefteq$ which ranks them according to how favourable they are to Even.
The unique maximum outcome for Even ranked by $\trianglelefteq$ naturally is \textit{winEven} and the unique minimum outcome is \textit{winOdd}.
For other results $(v_1,p_1)$ and $(v_2,p_2)$ they are only comparable if $v_1 = v_2$ as we do not know how favourable different ending vertices are to Even. In the case where $v_1 = v_2$ we define $(v_1,p_1) \trianglelefteq (v_2,p_2)$ if $p_1 \sqsubseteq p_2.$ Furthermore we also introduce the non-reflexive version $\triangleleft$ of $\trianglelefteq$ which for outcomes $o_1$ and $o_2$ contains $(o_1,o_2)$ if $o_1 \trianglelefteq o_2$ and $o_1 \neq o_2.$
Since we progressively traverse the graph and try to keep our calculations as local as possible in every step, whenever we consider plays of parity games we only consider the part of those plays which takes place in the part of the graph we have already traversed. To this end we use the notation $\textit{out}_{P,f,g}(U,v)$ which describes the outcome of the partial play $\pi$ in $U$ starting in $v \in U$ for a parity game $P$ which is consistent with Even playing $f$ and with Odd playing $g$ where $U$ is the part of the graph we have traversed so far. To define this notion, let $\pi'$ be the maximal initial segment of $\pi$ which is contained in $U.$ We define $\textit{out}_{P,f,g}(U,v)$ as the outcome of $\pi'$ if $\pi'$ is infinite and as the outcome of $\pi':w$ if $\pi'$ is finite where $w$ is the first vertex after $\pi'$ in $\pi.$
Intuitively, this means that this notion represents the outcome of the partial play until it leaves our scope $U.$

From this notion which only considers fixed strategies for both players we construct a higher level notion which only fixes a strategy for the Even player and then considers the possible result when the Odd player plays any strategy. This notion, which we call $\textit{result}_{P,f}(U,v),$ is composed of the outcomes of plays in the parity game $P$ starting in $v \in U$ and contained in $U$ for which Even plays according to $f$ which are most favourable for Odd. Since being favourable to Odd is synonymous with being unfavourable to Even with respect to $\trianglelefteq,$ we can define $\textit{result}_{P,f}(U,v)$ as follows.
\begin{equation*}
    \textit{result}_{P,f}(U,v) = \{o \mid \exists g: o = \textit{out}_{P,f,g}(U,v) \text{ and } \nexists g: \textit{out}_{P,f,g}(U,v) \trianglelefteq o \}
\end{equation*}
Thus, $\textit{result}_{P,f}(U,v)$ contains up to one outcome for each vertex which has an incoming edge from a vertex in $U.$
We extend $\trianglelefteq$ and $\triangleleft$ to compare results. For two results $r_1$ and $r_2,$ we define that $r_1 \trianglelefteq r_2$ if for all outcomes $o_1 \in r_1,$ there exists an outcome $o_2 \in r_2$ with $o_1 \trianglelefteq o_2.$ As for outcomes, we say $r_1 \triangleleft r_2$ if $r_1 \trianglelefteq r_2$ and $r_1 \neq r_2$ which means that $\{\textit{winEven}\}$ is the unique maximum and $\{\textit{winOdd}\}$ is the unique minimum of $\trianglelefteq$ and $\triangleleft$ on results.
For $\textit{result}_{P,f}(U,v) \trianglelefteq \textit{result}_{P,f'}(U,v),$ we say\define{$f'$ dominates $f$ in $U$ when starting in $v$ on $P$}.

Lastly, we create a structure which no longer fixes a strategy for either player and instead considers all combinations of strategies. This structure is defined to fit our utilisation of graph decompositions.
Specifically, for a vertex $t$ in a graph decomposition, this structure, which we call $\textit{Frontier}_{P}(t),$ contains a tuple for each vertex $v \in V_t$ and strategy $f$ for the Even player, which contains $v$ and the result $\textit{result}_{P,f}(V_t,v)$ of $f$ when starting in $v.$
Explicitly, this means that $\textit{Frontier}_{P}(t)$ is defined as follows:
\begin{equation*}
    \textit{Frontier}_{P}(t) = \{(v, \textit{result}_{P,f}(V_t, v)) \mid v \in V_t \text{ and }f \text{ is a strategy}\}.
\end{equation*}

This is the central data structure of the algorithm which solves parity games on DAG decompositions.
Since the proofs henceforth contain a lot of 2-tuples, we introduce the function $\gamma$ which projects 2-tuples onto their components. We define $\gamma((a,b),1)\coloneqq a$ and $\gamma((a,b),2)\coloneqq b.$
Based on the notion of frontiers we also introduce the concept of a strategy $f$ being consistent with a set $R$ of tuples $(v,f'))$ under $t \in V(P)$ which represents $f$ behaving like a strategy $f'$ under $t$ when starting in $v.$ We say a strategy $f$ is\define{consistent with a set $R$ under $t$} if for each $v \in V(D)$ there exists $(v,f') \in R$ with $\textit{result}_{P,f}(V_t, v) = \textit{result}_{P,f'}(V_t, v).$ 
To calculate frontiers for $\Separations$-DAGs, we provide a slightly adjusted definition of the frontier data structure. Concretely, we confine frontiers to only contain outcomes for specific combinations of starting vertices and strategies. This results in the following definition for $P$ and $t \in V(T)$:

\begin{align*}
    \textit{Frontier}_{P, R}(t) \coloneqq \{ (v, \textit{result}_{P,f}(V_t, v)) \mid v \in V_t \text{ and } f \text{ is a strategy and }(v,f) \in R\}.
\end{align*}

Let $R_{P, t, opt}$ be the set of tuples $(v,f)$ such that $v \in V_t,$ $f$ is a strategy and there exists no $f'$ with $\textit{result}_{P,f}(V_t, v) \triangleleft \textit{result}_{P,f'}(V_t, v)$ and let $R_{P, t, all}$ be the set of all tuples $(v,f)$ such that $v \in V_t,$ $f$ is a strategy. }
We use the following abstraction from the algorithm for parity games on DAG decompositions:

\begin{theorem}[\cite{berwanger2006dag}]
    For a parity game $P=(V,E,W_0,\Omega),$ a nice DAG decomposition $(T,(X_t)_{t \in V(T)})$ of $(V,E),$ $d \in V(T)$ and a set $R_c$ for every $c\in \outN{D}{d}$, let $R_d$ contain all tuples of vertices $v \in V_t$ and strategies $f$ such that $f$ is consistent with the tuples in $R_c$ under $c.$
    Given $\textit{Frontier}_{P, R_c}(c)$ for all $c\in \outN{D}{d}$, the set $\textit{Frontier}_{P, R_d}(d)$ can be computed in $O(|V|^{h(k)}),$ where $h$ is a function and $k$ is the maximum among the size of the bags of $d$ and its children. \label{berwCompFront}
\end{theorem}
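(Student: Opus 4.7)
The plan is to adapt the frontier-update procedure from the algorithm of \cite{berwanger2006dag}, which already shows how to compute $\textit{Frontier}_{P}(d)$ from the frontiers of the children of $d$ in a nice DAG decomposition by a case analysis matching the conditions \cref{item:N:twoOut,item:N:bothSuccSameBag,item:N:oneSuccOneDiff}. My first step is to verify that when the quantification over strategies at each child $c$ is restricted to those arising from tuples in $R_c$, the combined collection of strategies at $d$ is exactly $R_d$ as defined in the statement. Once this correspondence is established, the same procedure from the cited work, applied verbatim but with the restricted inputs, computes $\textit{Frontier}_{P, R_d}(d)$ rather than the full frontier.

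I would then split the algorithmic part into three cases. For $d$ a sink there is nothing to compute. For $d$ with a single child $c$ satisfying $|X_d \triangle X_c| = 1$, I would let $v^\star$ denote the unique vertex in the symmetric difference and use guardedness \cref{item:D:edgesGuard} to argue that every tuple $(v, r) \in \textit{Frontier}_{P, R_c}(c)$ extends uniquely after adding or removing $v^\star$: outcomes of the form $(v^\star, p)$ are either propagated one step using a move of whichever player owns $v^\star$, or they are treated as entry points for a newly introduced fragment, both of which can be read off directly from the stored frontier. For $d$ with two successors $c_1, c_2$ satisfying $X_d = X_{c_1} = X_{c_2}$, guardedness makes $V_{c_1}$ and $V_{c_2}$ disjoint, so any combined strategy restricts to independent strategies on $V_{c_1}$ and $V_{c_2}$, and the combined result for a starting vertex in $V_{c_i}$ can be obtained by iterating lookups in both frontiers until a fixed point in the finite outcome lattice is reached.

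The main obstacle will be the correspondence between $R_d$ and the strategies produced by this assembly procedure, since a strategy in $R_d$ must be consistent with \emph{some} tuple in each child's $R_c$, and the procedure must enumerate exactly these combinations without introducing spurious entries or missing valid ones. To handle this I would argue separately that every tuple produced by the assembly is witnessed by a genuine combined strategy, and conversely that any strategy consistent with the $R_c$ appears in the enumeration, using the disjointness of the $V_{c_i}$ and the fact that strategies on disjoint vertex sets can be freely combined. Once the correspondence is settled, the complexity bound follows: each frontier entry has description size $O(k)$, the number of distinct entries per bag is bounded by $|V|^{O(k)}$, and the finiteness of the outcome lattice under $\trianglelefteq$ bounds the number of fixed-point iterations in the two-child case, together yielding the claimed $O(|V|^{h(k)})$ running time for a suitable function $h$.
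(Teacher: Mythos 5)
Two preliminary remarks. First, this statement is not proved in the paper at all: Theorem~\ref{berwCompFront} is imported verbatim, with citation, as an abstraction of the frontier-update procedure of Berwanger et al., and is then used as a black box in \cref{degOneCompFront,FrontInTime}. So there is no proof in the paper to match; what you have written is a reconstruction of the cited result. Judged on its own terms, the overall shape is the right one (bottom-up case analysis over the node types \ref{item:N:twoOut}--\ref{item:N:oneSuccOneDiff} of a nice DAG decomposition, with the quantification over Even's strategies restricted to the given sets $R_c$), but two of your technical claims are incorrect, and together they locate the real difficulty in the wrong case.

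In the two-successor case you rely on ``guardedness makes $V_{c_1}$ and $V_{c_2}$ disjoint.'' That is false: \ref{item:D:edgesGuard} constrains edges of the digraph, not bag membership, and nothing in \ref{item:D:containsAll}--\ref{item:D:edgesGuard} prevents a vertex from occurring in bags strictly below both $c_1$ and $c_2$ while avoiding $X_d=X_{c_1}=X_{c_2}$; such a vertex lies in $V_{c_1}\cap V_{c_2}$ (this very overlap is why \cref{sec:ParGames} builds the doubled digraph $D'$ with indexed copies). What guardedness actually gives is different and makes your fixed-point iteration unnecessary: every edge leaving $V_{c_i}$ ends in $X_d$, which lies outside $V_d$, so a partial play inside $V_d$ that starts in $V_{c_i}$ stays in $V_{c_i}$ until it leaves $V_d$ altogether; hence no alternation between the two child frontiers can occur and the entries for such starting vertices are read off from the corresponding child frontier directly. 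Conversely, the case you dismiss with a ``unique one-step extension'' is where the composition work genuinely lies: when $v^\star\in X_c\setminus X_d$, the vertex $v^\star$ joins $V_d$, a play in $V_d$ may return to $v^\star$ arbitrarily often and re-enter $V_c$ after each visit, and one must compose entries of $\textit{Frontier}_{P,R_c}(c)$ across all these visits under one fixed strategy for Even, including recognising infinite recurrence through $v^\star$ and the parity of the minimal priority on such loops; a single lookup plus one move by the owner of $v^\star$ does not suffice, and it is exactly here that the consistency requirement defining $R_d$ has to be verified rather than asserted. Until these two cases are repaired, the proposal does not establish the theorem.
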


\toappendix{
Our algorithm calculates $\textit{Frontier}_{P, R_{P, t, opt}}(t)$ for all $t \in V(T).$ To do so we show that there is no incentive for the Even player to play a strategy which is worse than another strategy with respect to $\trianglelefteq.$ To show this we use the following lemma. 

\begin{lemma}
    \label{worseStratsDontHelp}
    Let $\pi,$ $\pi'$ and $\phi'$ be partial plays of the parity game on $P$ such that $\pi$ and $\pi'$ are finite, start in the same vertex and $\textit{out}_P(\pi) \trianglelefteq \textit{out}_P(\pi').$ Let $\phi$ be the partial play resulting from replacing any number of segments in $\phi'$ that are equal to $\pi'$ by $\pi.$ It holds that $\textit{out}_P(\phi) \trianglelefteq \textit{out}_P(\phi').$ x
\end{lemma}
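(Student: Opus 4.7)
The plan is to reduce the lemma to a short monotonicity fact about numerical minima under $\sqsubseteq$, then handle the replacement by a combination of induction (for finitely many replacements) and a direct argument on priorities appearing infinitely often (for infinitely many). The key auxiliary statement I have in mind is: whenever $p \sqsubseteq p'$, one has $\min(p, q) \sqsubseteq \min(p', q)$ for every priority $q$, where $\min$ is the usual numerical minimum. This is proved by a short case distinction on the parities of $p$ and $p'$ (observing that $p \sqsubseteq p'$ rules out $p$ even and $p'$ odd) together with the position of $q$ relative to $p$ and $p'$. The fact extends, by applying it with $q$ equal to the numerical minimum of a set $Q$ of priorities, to $\min(\{p\} \cup Q) \sqsubseteq \min(\{p'\} \cup Q)$ for any non-empty $Q$.

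First, I would prove the single-replacement version, in which $\phi = \alpha\pi\beta$ and $\phi' = \alpha\pi'\beta$ for a prefix $\alpha$ and suffix $\beta$, with $\beta$ possibly infinite. If $\beta$ is infinite then so are $\phi$ and $\phi'$, their tails coincide, and therefore the priorities appearing infinitely often are the same, so the outcomes are identical. If $\beta$ is finite, then $\phi$ and $\phi'$ end at a common vertex (forced by $\textit{out}_P(\pi) \trianglelefteq \textit{out}_P(\pi')$), and their minimum priorities are $\min(p_\alpha, p_\pi, p_\beta)$ and $\min(p_\alpha, p_{\pi'}, p_\beta)$ respectively, where $p_\cdot$ abbreviates the numerical minimum of the priorities occurring in the segment. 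The auxiliary lemma applied with fixed $q = \min(p_\alpha, p_\beta)$ yields $\textit{out}_P(\phi) \trianglelefteq \textit{out}_P(\phi')$.

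Next, for finitely many replacements I iterate: construct a chain $\phi' = \phi^{(0)}, \phi^{(1)}, \ldots, \phi^{(k)} = \phi$ where $\phi^{(i+1)}$ arises from $\phi^{(i)}$ by a single replacement; the single-replacement case together with transitivity of $\trianglelefteq$ closes this subcase. If $\phi'$ is infinite but only finitely many replacements occur, the unchanged tail again forces identical outcomes. The remaining case is an infinite $\phi'$ with infinitely many replaced segments: let $Y_\infty$ be the set of priorities appearing infinitely often in the complement of the replaced portion, which is common to $\phi$ and $\phi'$. Since $\pi$ and $\pi'$ are finite and occur infinitely often, the priorities appearing infinitely often in $\phi$ are exactly $X \cup Y_\infty$ and in $\phi'$ exactly $X' \cup Y_\infty$, where $X, X'$ are the priority sets of $\pi, \pi'$. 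Thus the $\liminf$s equal $\min(p_\pi, \min Y_\infty)$ and $\min(p_{\pi'}, \min Y_\infty)$ respectively, and the auxiliary lemma finishes the argument after translating $\sqsubseteq$ on these priorities back to $\trianglelefteq$ on the outcomes $\textit{winEven}$ and $\textit{winOdd}$.

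I expect the main obstacle to be bookkeeping rather than mathematical depth: before the inductive chain and the $\liminf$ analysis can be made rigorous, one must fix at the outset a family of pairwise disjoint index intervals at which the replacements occur, so that the decomposition of $\phi'$ into ``replaced'' and ``unchanged'' parts is unambiguous (in particular handling potential overlaps of copies of $\pi'$ inside $\phi'$). The auxiliary monotonicity lemma itself is elementary but branches into several subcases based on the parities of $p, p'$ and the position of $q$, which should be written out carefully to avoid missed cases.
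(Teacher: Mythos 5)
Your proposal is correct, but it is organized differently from the paper's proof, which handles all replacements in one stroke: the paper compares the lowest priority $q$ of $\phi$ (respectively the lowest priority occurring infinitely often, when $\phi$ is infinite) with the corresponding value $q'$ for $\phi'$, observes that $q \neq q'$ forces $q = p$ or $q' = p'$ where $p,p'$ are the minimum priorities of $\pi,\pi'$, and concludes by a parity argument from $p \sqsubseteq p'$; there is no induction on the number of replacements and no separate finite-versus-infinite replacement split beyond the case distinction on whether $\phi$ itself is finite. You instead isolate the monotonicity fact that $p \sqsubseteq p'$ implies $\min(p,q) \sqsubseteq \min(p',q)$, prove the single-replacement case, iterate via transitivity of $\trianglelefteq$ (which is legitimate, since comparability of $\textit{out}_P(\pi)$ and $\textit{out}_P(\pi')$ forces $\pi$ and $\pi'$ to share their last vertex, so every play in your chain is a legal partial play ending at the same vertex), and treat infinitely many replacements by the $X \cup Y_\infty$ decomposition of the priorities occurring infinitely often --- which is exactly the argument the paper's infinite case makes implicitly. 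What your route buys is modularity and locally checkable steps; in particular, the explicit monotonicity lemma is where the paper's parity reasoning really lives, and stating it cleans up the slight imprecision in the paper's finite case, where the outcome of $\phi'$ is written as $(w,p')$ even though its minimum priority $q'$ need not equal $p'$. What it costs is the bookkeeping you already flagged (fixing pairwise disjoint replaced intervals, and the degenerate cases where $\alpha$, $\beta$ or $Y_\infty$ is empty), none of which is a mathematical obstacle.
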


\begin{proof}
    Let $p$ be the lowest priority in $\pi$ and let $p'$ be the lowest priority in $\pi'.$
    We distinguish the following two cases.

    Case 1 $\phi$ is infinite: Let $q$ be the lowest priority occurring infinitely often in $\phi,$ let $q'$ be the lowest priority occurring infinitely often in $\phi'.$ 
    If $q = q',$ we know that $\textit{out}_P(\phi) = \textit{out}_P(\phi').$
    If $\pi$ gets replaced a finite amount of times, the same priorities that occur infinitely often in $\phi$ also occur infinitely often in $\phi'$ and thus $q=q'.$ 
    If $\pi$ gets replaced infinitely often and $q \neq q',$ we know that $q=p$ or $q'=p'.$ If $q < q',$ then $p < p'$ and, since $\textit{out}_P(\pi) \trianglelefteq \textit{out}_P(\pi'),$ we know that $p$ is odd, meaning that $\textit{winOdd} = \textit{out}_P(\phi) \trianglelefteq \textit{out}_P(\phi').$ If $q' < q,$ then $p' < p$ and, since $\textit{out}_P(\pi) \trianglelefteq \textit{out}_P(\pi'),$ we know that $p'$ is even, meaning that $\textit{out}_P(\phi) \trianglelefteq \textit{winEven} = \textit{out}_P(\phi').$ 

    Case 2 $\phi$ is finite: The last vertex of $\phi$ and $\phi'$ is always identical, we call it $w.$ Let $q$ be the lowest priority occurring in $\phi,$ let $q'$ be the lowest priority occurring in $\phi'.$ If $q = q',$ we know that $\textit{out}_P(\phi) = \textit{out}_P(\phi').$ If $q \neq q',$ we know that $q=p$ or $q'=p'.$ 
    If $q < q',$ then $p < p'$ and, since $\textit{out}_P(\pi) \trianglelefteq \textit{out}_P(\pi'),$ we know that $p$ is odd which combined with $p < p'$ means that $\textit{out}_P(\phi) = (w,p) \trianglelefteq (w,p') = \textit{out}_P(\phi').$ If $q' < q,$ then $p' < p$ and, since $\textit{out}_P(\pi) \trianglelefteq \textit{out}_P(\pi'),$ we know that $p'$ is even which combined with $p' < p$ means that $\textit{out}_P(\phi) = (w,p) \trianglelefteq (w,p') = \textit{out}_P(\phi').$
\end{proof}

From \cref{worseStratsDontHelp}, it follows that there is never an advantage for the Even player in playing a strategy $f$ under a vertex $d \in V(T)$ whose result is worse then the result of another strategy $f'$ under $d$ with respect to $\trianglelefteq,$ since, for every play $\pi'$ under $d$ consistent with $f',$ there exists a play $\pi$ consistent with $f$ under $d,$ such that $\textit{out}_P(\pi) \trianglelefteq \textit{out}_P(\pi').$ }
We use \cref{berwCompFront} to calculate the frontiers for the vertices in $(T,\sigma)$ by constructing DAG decompositions with similar frontiers.

\begin{theorem}
    For a parity game $P=(V,E,W_0,\Omega),$ a nice $\Separations$-DAG $(T,\sigma)$ of $D=(V,E)$ and $d \in V(T)$ with $\outD{T}{d} \leq 1,$ given $\textit{Frontier}_{P, R_{P, c, opt}}(c)$ if $d$ has a child $c,$ $\textit{Frontier}_{P, R_{P, d, opt}}(d)$ can be computed in $O(|V|^{h(k)}),$ where $h$ is a function and $k$ is the maximum among the size of the bags of $d$ and its children. \label{degOneCompFront}
\end{theorem}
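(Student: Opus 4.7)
The plan is to apply \cref{berwCompFront} to a small local DAG decomposition that mimics the structure of the $\Separations$-DAG around $d$. First, if $d$ has no child, then $\botS(d) = (V(D) \to \emptyset)$ by convention, so $V_{\bot}(\botS(d)) = \emptyset$; hence $V_d = \emptyset$ and $\textit{Frontier}_{P, R_{P, d, opt}}(d)$ is empty and can be output directly.

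For the main case in which $d$ has a unique child $c$, by the niceness condition~\ref{item:nice:onlyOne} we have $|\bag(d) \triangle \bag(c)| \leq 1$, which exactly matches condition~\ref{item:N:oneSuccOneDiff} for nice DAG decompositions. I would construct a nice DAG decomposition $(T', (X'_{t})_{t \in V(T')})$ of the induced subgame on $X_{\succcurlyeq d}$ whose underlying DAG is the restriction of $T$ to the descendants of $d$ (with $d$ as the unique source) and whose bags are inherited from the $\Separations$-DAG, then invoke \cref{berwCompFront} with the given $\textit{Frontier}_{P, R_{P, c, opt}}(c)$ as input. Since the Berwanger algorithm is bottom-up and treats the child frontier as given, only the single step from $c$ to $d$ is executed explicitly; the descendants of $c$ merely serve to justify correctness.

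The hard part will be verifying the guarding condition~\ref{item:D:edgesGuard}, namely that $\bag(d) \cap \bag(c)$ guards $V_{c}$ inside $D[X_{\succcurlyeq d}]$. For this I would exploit~\ref{item:nice:normedTop}, which gives $\sigma((d,c)) = \topS(c)$, together with the fact that $c$ is the only outgoing neighbour of $d$, so that $\botS(d) = \topS(c)$. Consequently $\bag(d) \cap \bag(c)$ contains $V_{S}(\topS(c))$, which by the definition of a directed separation blocks every edge from $V_{\bot}(\topS(c))$ back into $V_{\top}(\topS(c))$. A complementary structural observation, that $X_{\succcurlyeq d} = \bag(d) \cup V_{\bot}(\botS(d))$, proved by induction on the height of $d$ in $T$ using the consistency of $\sigma$, identifies $V_d$ in the local DAG decomposition with $V_{\bot}(\botS(d))$ in the $\Separations$-DAG, so that the two notions of $\textit{Frontier}$ agree at both $d$ and $c$.

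Once the local decomposition and the translation of frontier structures are validated, niceness of $(T', X')$ is inherited from that of $(T, \sigma)$, and \cref{berwCompFront} yields $\textit{Frontier}_{P, R_{P, d, opt}}(d)$ from $\textit{Frontier}_{P, R_{P, c, opt}}(c)$ in time $O(|V|^{h(k)})$, where $k$ bounds the bag sizes of $d$ and $c$.
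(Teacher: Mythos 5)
Your sink case and your use of $\botS(d)=\topS(c)$ are fine, but the core of your construction rests on a claim the paper cannot (and does not) make: that the restriction of $T$ to the descendants of $d$, with the bags $\bag(t)$ inherited from the $\Separations$-DAG, is a (nice) DAG decomposition of $D[X_{\succcurlyeq d}]$. The axioms of an $\Separations$-DAG do not give you \ref{item:D:laminar} or \ref{item:D:edgesGuard} at branch vertices below $c$: when some descendant has two children $c_1,c_2$, the sets $X_{\succcurlyeq c_1}$ and $X_{\succcurlyeq c_2}$ may intersect outside the bags, and precisely this obstruction is why the paper has to introduce the vertex-duplicated digraph $D'$ and the congruence machinery in \cref{FrontInTime}. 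If your reinterpretation worked, that entire construction would be superfluous and, worse, any digraph would admit a DAG decomposition of width comparable to its $\Separations$-DAG-width, defeating the point of the parameter. Your claim that niceness of $(T',X')$ is "inherited" also fails: nice DAG decompositions require, e.g., \ref{item:N:bothSuccSameBag} (both successors of a branch vertex carry the same bag as the parent), which nice $\Separations$-DAGs do not satisfy. The paper's proof avoids all of this: it builds a four-vertex \emph{path} decomposition of the whole digraph $D$ with bags $V\setminus V_d$, $\bag(d)$, $\bag(c)$, $X_{\succcurlyeq c}$, checks the three DAG-decomposition axioms directly for this path (the guarding argument is essentially the one you sketch via $\topS(c)$), makes it nice with the construction of Berwanger et al., and applies \cref{berwCompFront} only to the single step from $c'$ to $d'$; the two huge end bags never enter the running time, which is governed by $\max\{|\bag(c)|,|\bag(d)|\}\leq k$.

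A second, smaller gap: \cref{berwCompFront} does not output $\textit{Frontier}_{P,R_{P,d,opt}}(d)$ directly; it outputs $\textit{Frontier}_{P,R_{d}}(d)$ for the set $R_d$ of strategies consistent with the given optimal strategies at the child. The paper closes this by invoking \cref{worseStratsDontHelp} to get $R_{P,d,opt}\subseteq R_{d}\subseteq R_{P,d,all}$ and then pruning all tuples dominated under $\triangleleft$; your proposal omits this filtering step and the argument that restricting to optimal child strategies loses nothing.
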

\ifshort{
\begin{proof}[Proofsketch]
    The case for $d$ being a sink is trivial. If $d$ has an outgoing neighbour we construct a DAG decomposition $(T_d,(X_t)_{t \in V(T)})$ for $(V,E).$
    \begin{align*}
        T_d &{}\coloneqq (\{d'_{\top},d',c',c'_{\bot}\}, \{(d'_{\top},d'),(d',c'),(c',c'_{\bot})\}) \\
        X_{d'_{\top}} &{}\coloneqq V \setminus V_{d} ,\quad
        X_{d'} \coloneqq X_d ,\quad
        X_{c'} \coloneqq X_c ,\quad
        X_{c'_{\bot}} \coloneqq X_{\succcurlyeq c}
    \end{align*}    
    This construction and \cref{berwCompFront} yield $\textit{Frontier}_{P, R_{P, d, opt}}(d) = \textit{Frontier}_{P, R_{P, d', opt}}(d')$.
\end{proof}}{}\else{}\fi
\appendixproof{degOneCompFront}{
\begin{proof}
    We consider the cases for what the out-neighbourhood for $d$ can look like in a nice $\Separations$-DAG.

    Case 1 $\outD{T}{d} = 0$: Then $d$ is a sink in $T$ and we know that $V_d = \emptyset.$ Thus, $\textit{Frontier}_P(d) = \emptyset$ and we choose $S_d = \emptyset.$

    Case 2 $\outN{T}{d} = \{c\}$: In this case we construct a DAG decomposition $(T_d,(X_t)_{t \in V(T)})$ for $(V,E).$ 
    \begin{align*}
        T_d &{}\coloneqq (\{d'_{\top},d',c',c'_{\bot}\}, \{(d'_{\top},d'),(d',c'),(c',c'_{\bot})\}) \\
        X_{d'_{\top}} &{}\coloneqq V \setminus V_{d} \\
        X_{d'} &{}\coloneqq X_d \\
        X_{c'} &{}\coloneqq X_c \\
        X_{c'_{\bot}} &{}\coloneqq X_{\succcurlyeq c}
    \end{align*}
    This is a DAG decomposition as $V_{d} \subseteq X_{\succcurlyeq c}$ and thus $X_{d'_{\top}} \cup X_{c'_{\bot}} = V$ which satisfies \ref{item:D:containsAll}, the condition \ref{item:D:edgesGuard} is fulfilled as $X_c,$ $X_d$ and $X_c \cap X_d$ all guard the vertices below them in $T$ since $(T,\sigma)$ is an $\Separations$-DAG and the condition \ref{item:D:laminar} is fulfilled as $X_{\preccurlyeq c} \cap (V \setminus V_{d}) = V_{B}(\topS{c}) \cap V_{A}(\botS{d}) = X_d \cap X_c.$
    We know that $\textit{Frontier}_{P, R_{P, c, opt}}(c) = \textit{Frontier}_{P, R_{P, t, opt}}(c').$
    We use the construction in \cite[Theorem 24]{BERWANGER2012900} to obtain a nice DAG decomposition from $(T_d, (X_t)_{t \in V(T)}).$
    We use $\textit{Frontier}_{P, R_{P, t, opt}}(c')$ and \cref{berwCompFront} to calculate $\textit{Frontier}_{P, R_{d'}}(d').$ The computation time for this is in $O(|V|^{h(k)})$ where $k$ is the width of $(T,\sigma)$ since by construction all bags of vertices between $c'$ and $d'$ have size at most $\text{max}\{|X_{c'}|, |X_{d'}|\}$ which is bounded by $k.$
    By \cref{worseStratsDontHelp} we know that $R_{P, d', opt} \subseteq R_{d'} \subseteq R_{P, d', all}.$ Thus, we construct $\textit{Frontier}_{P, R_{P, t, opt}}(d')$ by removing all tuples $(v, r)$ from $\textit{Frontier}_{P, R_{d'}}(d')$ for which there exists $(v,r') \in \textit{Frontier}_{P, R_{d'}}(d')$ with $r \triangleleft r'.$ The computation time for doing this by brute force is in $O(|\textit{Frontier}_{P, R_{d'}}(d')| \cdot k) \subset O(|V|^{h(k)}).$ This proves the theorem since $\textit{Frontier}_{P, R_{P, d, opt}}(d) = \textit{Frontier}_{P, R_{P, d', opt}}(d').$
\end{proof}}

The case where a vertex $d$ in the $\Separations$-DAG has two children $c_1$ and $c_2$ whose top separations uncross into $\botS(d)$ is more complicated and thus we have to lay some groundwork first.
We construct a digraph $D',$ a parity game $P'$ and a DAG decomposition $(T',(X_t)_{t \in V(T)})$ in which there exists a vertex $d'$ for which $\textit{Frontier}_{P', R_{P', d', opt}}(d')$ can be calculated using \cref{berwCompFront} and the set $\textit{Frontier}_{P, R_{P, d, opt}}(d)$ can be calculated easily using $\textit{Frontier}_{P', R_{P', d', opt}}(d').$ For this the digraph $D'$ is constructed as a copy of $D[X_{\succcurlyeq d}]$ with the exception that for every vertex $v$ which is in both $X_{\succcurlyeq c_1}$ and $X_{\succcurlyeq c_2}$ but not in $\bag(d)$ the digraph $D'$ contains two copies of $v.$ The digraph $D'$ does not copy vertices above $d$ in $D$ as $\bag(d)$ guards the vertices below $d$ and thus the vertices above $d$ do not influence $\textit{Frontier}_{P}(d).$ A closer explanation for the construction of $D'$ can be seen in \cref{fig:DPrimeConstruction}.

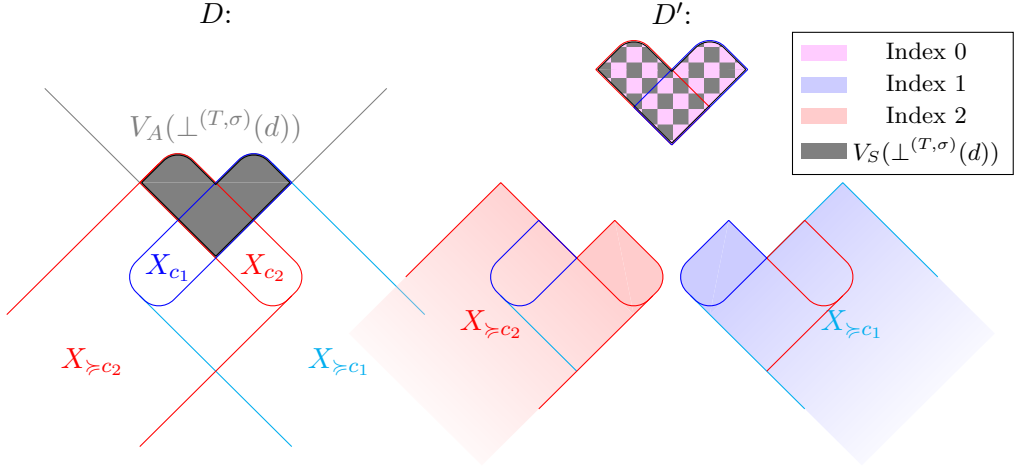
\begin{figure}[!t]
    \centering
    \begin{tikzpicture}[scale=0.5]
        \tikzfading[name=fade right, left color=transparent!0, right color=transparent!100]
        \def\topside{(0,-3.5) -- (-4.5,1)  (4.5,1) -- (0,-3.5)}
        \def\cTwo{(-2,-1.5) -- (-1,-0.5) -- (2.5,-4) -- (1.5,-5) -- (-2,-1.5)}
        \def\cTwoBot{(2,-4.5) -- (-2,-8.5) (-1.5,-1) -- (-5.5,-5)}
        \def\cOne{(2,-1.5) -- (1,-0.5) -- (-2.5,-4) -- (-1.5,-5)-- (2,-1.5)}
        \def\cOneBot{(-2,-4.5) -- (2,-8.5) (1.5,-1) -- (5.5,-5)}
        \def\XBot{(0,-3.45) -- (1.95,-1.5) (1.95,-1.5) -- (1,-0.55) -- (0,-1.55)  (0,-1.55) -- (-1,-0.55) -- (-1.95,-1.5) (-1.95,-1.5) -- (0,-3.45)}
        \def\XBotFillSharp{(2,-1.5) -- (0,-3.5) -- (-2,-1.5)}
        \def\XBotFillRounded{(2,-1.5) -- (1,-0.5) -- (0,-1.5)  (0,-1.5) -- (-1,-0.5) -- (-2,-1.5)}
        \draw (0,3) node {$D$:};
        \draw[rounded corners=8pt, color=gray] \topside;
        \draw[draw=none, fill=gray] \XBotFillSharp;
        \draw[draw=none, rounded corners=7.5pt, fill=gray] \XBotFillRounded;
        \draw[color=gray] (0,0) node {$V_A(\botS(d))$};
        \draw[color=cyan] \cOneBot (3.25,-6.25) node {$X_{\succcurlyeq c_1}$};
        \draw[color=red] \cTwoBot (-3.25,-6.25) node {$X_{\succcurlyeq c_2}$};
        \draw[rounded corners=8pt, color=darkred] \cTwo (1.25,-3.75) node {$X_{c_2}$};
        \draw[rounded corners=8pt, color=blue] \cOne (-1.25,-3.75) node {$X_{c_1}$};
        \draw[rounded corners=7.5pt, color=black] \XBot;
        \begin{customlegend}[
            legend entries={ 
                Index $0$,
                Index $1$,
                Index $2$,
                $V_S(\botS(d))$
            },
            legend style={at={(21,2.5)},font=\footnotesize}] 
            \addlegendimage{area legend,fill=lightviolet,draw opacity=0}
            \addlegendimage{area legend,fill=lightblue,draw opacity=0}
            \addlegendimage{area legend,fill=lightred,draw opacity=0}
            \addlegendimage{area legend,fill=gray,draw opacity=0}
        \end{customlegend}
        \def\XBotPrime{(13.95,1.5) -- (13,2.45) -- (12,1.45)  (12,1.45) -- (11,2.45) -- (10.05,1.5) (10.05,1.5) -- (12,-0.45) (12,-0.45) -- (13.95,1.5)}
        \def\XBotPrimeFillRounded{(13.95,1.5) -- (13,2.45) -- (12,1.45)  (12,1.45) -- (11,2.45) -- (10.05,1.5)}
        \def\XBotPrimeFillSharp{(10.05,1.5) -- (12,-0.45) -- (13.95,1.5)}
        \def\cTwoPrime{(12,-0.5) -- (13,0.5) (12,-0.5) -- (10,1.5) (10,1.5) -- (11,2.5) -- (13,0.5)
        (10.5,-2.5) -- (9.5,-3.5) (10.5,-2.5) -- (12,-4) -- (11,-5) -- (9.5,-3.5)
        (15.5,-2.5) -- (14.5,-3.5) (15.5,-2.5) -- (17,-4) -- (16,-5) -- (14.5,-3.5)}
        \def\cTwoBotPrimeFillSharpOne{(10.5,-2.5) -- (9.5,-3.5) -- (11,-5)}
        \def\cTwoBotPrimeFillSharpTwo{(3.5,-5.5) -- (7.5,-1.5) -- (11,-5) -- (7,-9)}
        \def\cTwoBotPrimeFillRound{(10.5,-2.5) -- (12,-4) -- (11,-5)}
        \def\cOneBotPrimeFillSharpOne{(13.5,-2.5) -- (14.5,-3.5) -- (13,-5)}
        \def\cOneBotPrimeFillSharpTwo{(20.5,-5.5) -- (16.5,-1.5) -- (13,-5) -- (17,-9)}
        \def\cOneBotPrimeFillRound{(13.5,-2.5) -- (12,-4) -- (13,-5)}
        \def\cTwoBotPrime{(16.5,-4.5) -- (14.5,-6.5)
        (11.5,-4.5) -- (8.5,-7.5) (7.5,-1.5) -- (5,-4) (7.5,-1.5) -- (9.5,-3.5)}
        \def\cOnePrime{(12,-0.5) -- (11,0.5) (12,-0.5) -- (14,1.5) (14,1.5) -- (13,2.5) -- (11,0.5) 
        (13.5,-2.5) -- (14.5,-3.5) (13.5,-2.5) -- (12,-4) -- (13,-5) -- (14.5,-3.5)
        (8.5,-2.5) -- (9.5,-3.5) (8.5,-2.5) -- (7,-4) -- (8,-5) -- (9.5,-3.5)}
        \def\cOneBotPrime{(7.5,-4.5) -- (9.5,-6.5) 
        (12.5,-4.5) -- (15.5,-7.5) (19,-4) -- (16.5,-1.5) (16.5,-1.5) -- (14.5,-3.5)}
        \draw (12,3) node {$D'$:};
        \draw[rounded corners=7.5pt, draw=none, fill=lightblue] \cOneBotPrimeFillRound;
        \draw[draw=none, fill=lightblue] \cOneBotPrimeFillSharpOne;
        \draw[draw=none, fill=lightblue, path fading=fade right, fading transform={rotate=-45}] \cOneBotPrimeFillSharpTwo;
        \draw[rounded corners=7.5pt, draw=none, fill=lightred] \cTwoBotPrimeFillRound;
        \draw[draw=none, fill=lightred] \cTwoBotPrimeFillSharpOne;
        \draw[draw=none, fill=lightred, path fading=fade right, fading transform={rotate=-135}] \cTwoBotPrimeFillSharpTwo;
        \draw[draw=none, fill=gray] \XBotPrimeFillSharp;
        \draw[draw=none, rounded corners=7.5pt, fill=gray] \XBotPrimeFillRounded;
        \draw[draw=none, pattern=checkerboard, pattern color=lightviolet] \XBotPrimeFillSharp;
        \draw[draw=none, rounded corners=7.5pt, pattern=checkerboard, pattern color=lightviolet] \XBotPrimeFillRounded;
        \draw[color=cyan] \cOneBotPrime (16.75,-5.25) node {$X_{\succcurlyeq c_1}$};
        \draw[color=red] \cTwoBotPrime (7.25,-5.25) node {$X_{\succcurlyeq c_2}$};
        \draw[rounded corners=8pt, color=red] \cTwoPrime;
        \draw[rounded corners=8pt, color=blue] \cOnePrime;
        \draw[rounded corners=7.5pt, color=black] \XBotPrime;
    \end{tikzpicture}
    \caption{We illustrate the construction of $D'.$
    The vertices in $D'$ are tuples containing the vertex from $D$ they represent as well a number between $0$ and $2.$
    For a vertex $v \in V(D'),$ we say $\gamma(v,1)$ is its corresponding vertex in $D$ and $\gamma(v,2)$ is its index.
    The index of a vertex is $0$(marked in violet) if it is in the bag of $d,$ $1$(marked in blue) if it is below $c_1$ and $2$(marked in red) if it is below $c_2.$ Index $1$ vertices do not have outgoing edges to index $2$ vertices and vice versa except for the index $1$ vertices which are in the separator of $c_1$ and the index $2$ vertices which are in the separator of $c_2.$}\label{fig:DPrimeConstruction}
\end{figure}
Formally $D'$ is defined as $D'\coloneqq (V', E')$ with
\begin{align*}
    V'\coloneqq{}& \{(v, 0) \mid v \in V_{S}(\botS(d))\} 
    \cup \{(v, 1) \mid v \in X_{\succcurlyeq c_1} \setminus \bag(d)\} 
    \cup \{(v, 2) \mid v \in X_{\succcurlyeq c_2} \setminus \bag(d)\} \\
    E'\coloneqq{}& \{((u, i),(v, 0)) \in V' \times V' \mid (u,v) \in E(D), i \in [0,2] \} \\
    {}\cup{}& \{((u, i),(v, i)) \in V' \times V' \mid (u,v) \in E(D), i \in [1,2]\} \\
    {}\cup{}& \{((u, i),(v, j)) \in V' \times V' \mid (u,v) \in E(D), i+j = 3, u \in X_{c_i}\}.
\end{align*}
Note that for every edge $(u,v)$ in $D[X_{\succcurlyeq d}]$ and $(u,i) \in V',$ there exists at least one edge $((u,i),(v,j))$ for a $j \in [0,2].$
We extend $D'$ to a parity game by mimicking $P.$
\begin{equation*}
    P'\coloneqq (V', V_0', E', \Omega'), \quad
    V_0'\coloneqq \{(v, i) \mid v \in V_0, (v, i) \in V'\}, \quad
    \Omega'((v,i)) \coloneqq \Omega(v)
\end{equation*}
In order to calculate frontiers for $P',$ we construct a DAG decomposition $(T',(X_t)_{t \in V(T)})$ for $D'.$
\begin{align*}
    V(T') &{}= \{d',d_{\bot}',c_1',c_2',c_{1,\succcurlyeq}',c_{2,\succcurlyeq}'\}\\
    E(T') &{}= \{(d',d_{\bot}'),(d_{\bot}',c_1'),(d_{\bot}',c_2'),(c_1',c_{1,\succcurlyeq}'),(c_2',c_{2,\succcurlyeq}')\} \\
    X_{d'} &{}= \{(u,0) \mid u \in V_{S}(\botS(d))\} \\
    X_{d_{\bot}'} &{}= \{(u,0) \mid u \in V_{S}(\botS(d)))\} \cup \{(v,1) \mid v \in X_{c_1} \setminus \bag(d)\} \cup \{(w,2) \mid w \in X_{c_2} \setminus \bag(d)\} \\
    X_{c_1'} &{}= \{(v,1),(v',0) \in V' \mid v,v' \in X_{c_1}\},\quad
    X_{c_2'} = \{(w,2),(w',0) \in V' \mid w,w' \in X_{c_2}\} \\
    X_{c_{1,\succcurlyeq}'} &{}= \{(v,1) \mid v \in X_{\succcurlyeq c_1} \setminus \bag(d)\} \cup \{(v',0) \mid v' \in X_{\succcurlyeq c_1} \cap \bag(d)\} \\
    X_{c_{2,\succcurlyeq}'} &{}= \{(v,2) \mid v \in X_{\succcurlyeq c_2} \setminus \bag(d)\} \cup \{(v',0) \mid v' \in X_{\succcurlyeq c_2} \cap \bag(d)\}
\end{align*}

\toappendix{
\begin{figure}
    \centering
    \scalebox{0.6}{\begin{tikzpicture}[>={Stealth[width=3mm,length=2mm]}]
        \begin{customlegend}[
            legend entries={ 
                Index $0$,
                Index $1$,
                Index $2$
            },
            legend style={at={(4,1.5)},font=\small}] 
            \addlegendimage{area legend,fill=lightviolet}
            \addlegendimage{area legend,fill=lightblue}
            \addlegendimage{area legend,fill=lightred}
        \end{customlegend}
        \begin{scope}
            \clip (-1,-3) rectangle (1,-2);
            \draw[draw=none, fill=lightviolet] (0,-3) circle (1cm);
        \end{scope}
        \begin{scope}
            \clip (-1,-4) rectangle (0,-3);
            \draw[draw=none, fill=lightred] (0,-3) circle (1cm);
        \end{scope}
        \begin{scope}
            \clip (0,-4) rectangle (1,-3);
            \draw[draw=none, fill=lightblue] (0,-3) circle (1cm);
        \end{scope}
        \begin{scope}
            \clip (-4,-6) rectangle (-2,-5);
            \draw[draw=none, fill=lightviolet] (-3,-6) circle (1cm);
        \end{scope}
        \begin{scope}
            \clip (-4,-7) rectangle (-2,-6);
            \draw[draw=none, fill=lightred] (-3,-6) circle (1cm);
        \end{scope}
        \begin{scope}
            \clip (4,-6) rectangle (2,-5);
            \draw[draw=none, fill=lightviolet] (3,-6) circle (1cm);
        \end{scope}
        \begin{scope}
            \clip (4,-7) rectangle (2,-6);
            \draw[draw=none, fill=lightblue] (3,-6) circle (1cm);
        \end{scope}
        \draw[fill=lightviolet] (0,0) circle (1.5cm) node {$V_{S}(\botS(d)$};
        \draw[] (0,-3) circle (1cm) node {$X_{c_2} \cup X_{c_1}$};
        \draw[] (3,-6) circle (1cm) node {$X_{c_1}$};
        \draw[] (-3,-6) circle (1cm) node {$X_{c_2}$};
        \draw[fill=lightblue] (3,-9) circle (1cm) node {$X_{\succcurlyeq c_1}$};
        \draw[fill=lightred] (-3,-9) circle (1cm) node {$X_{\succcurlyeq c_2}$};
        \begin{scope}
            \clip (-4,-8.5) rectangle (-2,-8);
            \draw[draw=none, fill=lightviolet] (-3,-9) circle (1cm);
        \end{scope}
        \begin{scope}
            \clip (2,-8.5) rectangle (4,-8);
            \draw[draw=none, fill=lightviolet] (3,-9) circle (1cm);
        \end{scope}
        \path[->] (0,-1.5) edge (0,-2)
        (-0.71,-3.71) edge (-2.29,-5.29)
        (0.71,-3.71) edge (2.29,-5.29)
        (-3,-7) edge (-3,-8)
        (3,-7) edge (3,-8);
    \end{tikzpicture}}
    \caption{This is an illustration of the DAG decomposition $(T',(X_t)_{t \in V(T)})$ for $D'.$ Note that the bags of $X_{c_1'}$ and $X_{c_2'}$ contain only the vertices matching their index and thus the vertices which correspond to vertices in $X_{c_i}$ but do not have index $i$ are not in the bag of $X_{c_i'}$ and thus also not in the bag of $X_{d_{\bot}'}.$}
    \label{fig:DPrimeDagDecompConstruction}
\end{figure}
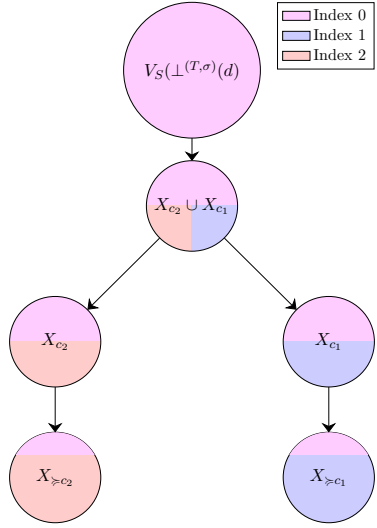

The construction $(T',(X_t)_{t \in V(T)})$ is a DAG decomposition for $D'$ as it satisfies the properties of a DAG decomposition. It satisfies \ref{item:D:containsAll} as $X_{c_{1,\succcurlyeq}'}, X_{c_{2,\succcurlyeq}'}$ and $X_{d'}$ contain all vertices of $D'$ and it satisfies \ref{item:D:laminar} as $X_{d_{\bot}'} \cap X_{c_{i,\succcurlyeq}'} = X_{c_i'},$ $X_{d'} \cap X_{c_{i,\succcurlyeq}'} \subseteq X_{c_i'}$ and $X_{d'} \cap X_{c_1'} = X_{d_{\bot}'}$ for $i \in \{1,2\}.$
We consider all edges to show $(T',(X_t)_{t \in V(T)})$ satisfies \ref{item:D:edgesGuard}. For the only source $d'$ all vertices of $D'$ are contained in bags below $d'$ thus they have no outgoing edges and are guarded by $\emptyset.$ The same goes for the edge $(d',d_{\bot}')$ as all vertices are in $X_{\succcurlyeq d_{\bot}'}.$ For $(d_{\bot}',c_i')$ with $i \in [1,2],$ the vertices in $X_{\succcurlyeq c_i'} \setminus X_{c_i'}$ do not have edges to the vertices with index $3-i$ and, since they correspond to $V_{\bot}(\topS(c_i)),$ they only have edges to vertices in $D'$ corresponding to vertices in $V_S(\topS(c_i)).$ Vertices corresponding to $V_S(\topS(c_i))$ with index $0$ or $i$ are all contained in $X_{d_{\bot}'}$ and $X_{c_i'}.$ For the edges $(c_i',c_{i,\succcurlyeq}')$ with $i \in [i,2]$ the correctness follows from the correctness for $(d_{\bot}',c_i')$ as $X_{d_{\bot}'} \cap X_{c_i'} \subseteq X_{c_i'} \cap X_{c_{i,\succcurlyeq}'}.$ 

In order to compare the vertices, outcomes, results, frontiers and plays for parity games $P$ and $P',$ we introduce the notion of congruence for all of them which defines two vertices, outcomes, results, frontiers or plays as congruent if they are identical up to the indices of the vertices involved. We say $v \in V(D)$ is congruent to $v' \in V(D'),$ written $v \equiv v',$ if $\gamma(v',1)=v.$ Furthermore, $\textit{out}_{P',f',g'}(v')$ is congruent to $\textit{out}_{P,f,g}(v)$ written $\textit{out}_{P,f,g}(v) \equiv \textit{out}_{P',f',g'}(v'),$ if 
\begin{align*}
    &\textit{winEven} = \textit{out}_{P,f,g}(v) = \textit{out}_{P',f',g'}(v') \\
    \text{or } &\textit{winOdd} = \textit{out}_{P,f,g}(v) = \textit{out}_{P',f',g'}(v') \\
    \text{or } &\begin{pmatrix}
                \gamma(\textit{out}_{P,f,g}(v),1) = \gamma(\gamma(\textit{out}_{P',f',g'}(v'),1),1) \\ 
                \text{ and } \gamma(\textit{out}_{P,f,g}(v),2) = \gamma(\textit{out}_{P',f',g'}(v'),2)
    \end{pmatrix}.
\end{align*}
That is if both outcomes are identical up to the index of the last vertex played.
Similarly, for $\textit{result}_{P,f}(U,v)$ and $\textit{result}_{P',f'}(U',v'),$ we say they are congruent or $\textit{result}_{P,f}(U,v) \equiv \textit{result}_{P',f'}(U',v'),$ if for all $o \in \textit{result}_{P',f'}(U',v'),$ there exists $o' \in \textit{result}_{P,f}(U,v)$ with $o \equiv o'$ and, for all $o \in \textit{result}_{P,f}(U,v),$ there exists $o' \in \textit{result}_{P',f'}(U',v')$ with $o \equiv o'.$ For $\textit{Frontier}_{P}(v)$ and $\textit{Frontier}_{P'}(v'),$ we say $\textit{Frontier}_{P}(v)$ is congruent to $\textit{Frontier}_{P'}(v')$ and write $\textit{Frontier}_{P}(v) \equiv \textit{Frontier}_{P'}(v'),$ if for all $(v', r') \in \textit{Frontier}_{P'}(v'),$ there exists $(v, r) \in \textit{Frontier}_{P}(v)$ with $v = \gamma(v',1)$ and $r \equiv r'$ and for all $(v, r) \in \textit{Frontier}_{P}(v),$ there exists $(v', r') \in \textit{Frontier}_{P'}(v')$ with $v = \gamma(v',1)$ and $r \equiv r'.$
Additionally, we define two plays $\pi$ and $\pi'$ for $P$ and $P'$ to be congruent or $\pi \equiv \pi',$ if $\pi[i] = \gamma(\pi'[i],1)$ for every $i.$
Note that for every vertex, outcome, result, frontier and play of $P'$ there exists exactly one such object for $D[X_{\succcurlyeq d}]$ which is congruent, while there can be multiple congruent objects for vertices, outcomes, results, frontiers and plays in $D[X_{\succcurlyeq d}]$ due to the two copies of vertices in $(X_{ \succcurlyeq c_1} \cap X_{ \succcurlyeq c_2}) \setminus X_d.$

Since it has been shown that, if there is a winning strategy for a player, there is also always a memoryless winning strategy for that player \cite{emerson1991tree}, we only have to consider memoryless strategies in parity games. 

\begin{lemma}
\begin{sloppypar}
    For each memoryless strategy $f'$ for the Even player in $P'$ and ${v \in V_{d'}},$ there exists a strategy $f$ for the Even player in $P$ and a result $r$ such that ${r \equiv \textit{result}_{P',f'}(V_{d'},v)}$ and $r \trianglelefteq \textit{result}_{P,f}(V_{d},\gamma(v,1)).$
\end{sloppypar} \label{DsimsDP}
\end{lemma}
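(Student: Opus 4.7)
The plan is to transform the memoryless Even-strategy $f'$ in $P'$ into a (possibly memoryful) Even-strategy $f$ in $P$ by simulating $f'$ through a canonical lifting of $P$-plays to $P'$-plays. The intuition is that Odd has at least as many options in $P'$ as in $P$ (since vertices in $(X_{\succcurlyeq c_1} \cap X_{\succcurlyeq c_2}) \setminus \bag(d)$ have two copies in $V'$), so the Odd-optimal outcome against $f$ in $P$ is at least as favourable for Even as the $P$-congruent copy of the Odd-optimal outcome against $f'$ in $P'$.

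For a partial play $\pi = w_0 w_1 \cdots w_n$ in $P$ with $w_0 = \gamma(v,1)$, I assign indices $i_0, \dots, i_n \in \{0,1,2\}$ inductively: set $i_0 \coloneqq \gamma(v,2)$ and, given $i_k$ with $(w_k, i_k) \in V'$, pick $i_{k+1}$ so that $((w_k,i_k),(w_{k+1},i_{k+1})) \in E'$, preferring $i_{k+1} = i_k$ when it is admissible and defaulting to $0$ whenever $w_{k+1} \in \bag(d)$. Using the three rules defining $E'$ together with the guarding property that every $D$-edge from $V_{\bot}(\topS(c_i))$ to $V_{\bot}(\topS(c_{3-i}))$ must pass through $\bag(c_i)$, one checks that such an $i_{k+1}$ always exists, so $\mathrm{lift}(\pi) \coloneqq (w_0,i_0)\cdots(w_n,i_n)$ is a valid partial play in $P'$ starting at $v$. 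Then define $f(\pi) \coloneqq \gamma(f'(\mathrm{lift}(\pi)), 1)$ whenever $w_n \in V_0$; this is a legal move in $P$ because projecting any $D'$-edge yields a $D$-edge.

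For any Odd-strategy $g$ in $P$, the play $\pi$ consistent with $(f,g)$ starting at $\gamma(v,1)$ lifts to a play consistent with $f'$ and some induced Odd-strategy $g'$ in $P'$, where $g'$ records both the projected move and the chosen index. Since projection preserves priorities and $\pi$ exits $V_d$ exactly when $\mathrm{lift}(\pi)$ exits $V_{d'}$ (both happen upon entering $\bag(d)$, whose lifts sit in $X_{d'}$), we obtain $\textit{out}_{P,f,g}(V_d, \gamma(v,1)) \equiv \textit{out}_{P',f',g'}(V_{d'}, v)$. Let $r$ denote the $P$-congruent copy of $\textit{result}_{P',f'}(V_{d'}, v)$, obtained by replacing each finite outcome $(v',p)$ with $(\gamma(v',1),p)$ and fixing $\textit{winEven}, \textit{winOdd}$. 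Each $o \in r$ is realised as $\textit{out}_{P,f,g}(V_d, \gamma(v,1))$ for some $g$, hence is dominated by some element of $\textit{result}_{P,f}(V_d, \gamma(v,1))$, giving $r \trianglelefteq \textit{result}_{P,f}(V_d, \gamma(v,1))$ as required.

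The main obstacle will be verifying that the lift is always a valid path in $D'$: this requires a case analysis on the three edge rules of $E'$ and uses that crossings between indices $1$ and $2$ can only occur via $\bag(c_i)$, which the guarding property in the nice $\Separations$-DAG supplies. A secondary subtlety is the boundary behaviour at $\bag(d)$, which the lifting rule handles by forcing $i_{k+1} = 0$ whenever the projected vertex enters the bag; this is precisely what ensures that the terminal vertex of a finite outcome and its lift differ only by $\gamma(\cdot,1)$, matching the congruence relation $\equiv$ on outcomes.
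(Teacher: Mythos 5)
Your overall architecture is the same as the paper's: reconstruct, along a $P$-play, the indices the corresponding $P'$-play would carry, define $f$ by applying the memoryless $f'$ to the lifted history and projecting with $\gamma(\cdot,1)$, and then argue that every $f$-consistent play lifts to an $f'$-consistent play with congruent outcome. However, your index-reconstruction rule has a genuine gap at Even-player vertices. You extend the lift by ``preferring $i_{k+1}=i_k$ when admissible and defaulting to $0$ on $\bag(d)$'', but when $w_k\in V_0$ the index of the next vertex must be taken from $f'$ itself, i.e.\ $i_{k+1}=\gamma\bigl(f'((w_k,i_k)),2\bigr)$ (this is exactly the case ``$\tail{s}\in V_0$'' in the paper's function $\zeta$). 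The two rules disagree precisely when $w_k\in X_{c_{i_k}}\cap V_0$ and the target $u=\gamma(f'((w_k,i_k)),1)$ lies in $(X_{\succcurlyeq c_1}\cap X_{\succcurlyeq c_2})\setminus\bag(d)$, so that both $(u,i_k)$ and $(u,3-i_k)$ are out-neighbours of $(w_k,i_k)$ in $D'$: if $f'$ moves to the cross copy $(u,3-i_k)$, your lift records $(u,i_k)$, and the lifted play is then \emph{not} consistent with $f'$, contrary to the claim your argument rests on.

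This is not merely a bookkeeping issue, because $f'$ is memoryless on $V'$ and may behave differently on the two copies $(u,1)$ and $(u,2)$. After the mismatch your $f$ keeps simulating $f'$ at the copy $(u,i_k)$ that $f'$ would never visit along its own plays; $f'$ may play arbitrarily badly there (this does not affect $\textit{result}_{P',f'}(V_{d'},v)$), so against a suitable Odd strategy $g$ the play consistent with your $f$ can produce, e.g., $\textit{winOdd}$ even though $r\equiv\textit{result}_{P',f'}(V_{d'},v)$ contains no such outcome, and then $r\trianglelefteq\textit{result}_{P,f}(V_d,\gamma(v,1))$ fails for your particular $f$. The fix is exactly the paper's: make the lift follow the index component of $f'$'s move at $V_0$-vertices (keeping your other cases: forced index $0$ on $\bag(d)$, unchanged index when no cross edge is available, arbitrary choice at Odd vertices where both copies are reachable); with that change the rest of your argument, including the boundary behaviour at $\bag(d)$ and the outcome comparison, goes through as in the paper.
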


\begin{proof}
    Let $f'$ be a memoryless strategy for the Even player in $P'$ and let $v \in V_{d'}.$
    For sequences $s$ of vertices in $D$ starting in $\gamma(v,1)$ and $\textit{last} \in V(D),$ we define $\zeta(s:last).$
    This function reconstructs which index the vertices in a play would have if it were played in $P'$ according to $f'.$
    We define it recursively as follows.
    
    \begin{minipage}{0.77\linewidth}
    \begin{flushleft}
    \vspace{5pt}
    \begin{align*}
        \zeta(s:last)\coloneqq\begin{cases}
            \gamma(v,2) & \text{if }s=() \text{ and }last = \gamma(v,1) \\
            0 & \text{else if }last \in V_{S}(\botS(d)) \\
            \zeta(s) & \text{else if }(\tail{s},\zeta(s)) \notin X_{c_{\zeta(s)}'} \\
            \gamma(f'(\tail{s}, \zeta(s)), 2) & \text{else if }\tail{s} \in V_0 (\text{case 4})\\
            1 & \text{else if } (last, 1) \in V' \\
            2 & \text{else} 
        \end{cases}
    \end{align*}
    \end{flushleft}
    \end{minipage}
    \hfill
    \begin{minipage}{0.2\linewidth}
    \begin{flushright}
    \begin{align*}
        \text{(case 1)}\\
        \text{(case 2)}\\
        \text{(case 3)}\\
        \text{(case 4)}\\
        \text{(case 5.1)}\\
        \text{(case 5.2)}
    \end{align*}
    \end{flushright}
    \end{minipage}
    
    The function $\zeta$ correctly mirrors the behaviour of $f'$ as 
    \begin{itemize}
        \item the index for the first vertex is the index of $v$ (case 1) and the index is $0$ if the vertex only exists with index $0$ in $D'$ (case 2).
        \item the index does not change in case the prior vertex does not have edges to vertices with the other index. This is true for all vertices which are not in $X_{c_1'}$ or $X_{c_2'}$ (case 3).
        \item the index imitates the behaviour of $f'$ in case the prior vertex in the sequence has edges to vertices of different indices and the vertex is in $V_0$ (case 4).
        \item the index is chosen arbitrarily in $V'$ in case the prior vertex is in $X_{c_1'}$ or $X_{c_2'}$ and not in $V_0$ as $f'$ does not dictate specific behaviour and the out-neighbourhood of these vertices contains all versions of the contained vertices (case 5).
    \end{itemize}
    Since $\zeta$ correctly recreates the behaviour of $f',$ it correctly predicts the index of $last.$ 
    We define $f$ for a sequence $seq:last$ of vertices in $D$ where $last \in V_0.$ 
    \begin{equation*}
        f(seq:last)\coloneqq \gamma(f'((last,\zeta(seq:last))),1)
    \end{equation*}
    We show that for every partial play $\pi$ under $d$ in $P$ consistent with $f$ starting in $\gamma(v,1),$ there exists a partial play $\pi'$ with the same outcome under $d'$ in $P'$ consistent with $f'$ starting in $v.$ 
    For a partial play $\pi,$ let $\pi'$ be the partial play obtained from $\pi$ by, for every $i \in [1,|\pi|],$ setting $\pi'[i]$ to $(\pi[i],\zeta(\pi[i])).$ This is a legal play as by the definition of $\zeta$ and given the fact that $(\pi[i],\pi[i+1]) \in E(D),$ it follows that $((\pi[i],\zeta(\pi[i])),(\pi[i+1],\zeta(\pi[i+1]))) \in E(D').$
    The partial play $\pi'$ is also consistent with $f'$ as $\pi'$ mirrors the behaviour of $\pi$ and $\pi$ is consistent with $f$ and by extension $\zeta$ which are defined through the behaviour of $f'.$
    The outcome of these plays is congruent as $\Omega(\pi[i]) = \Omega'(\pi'[i])$ and the last vertices in $\pi$ and $\pi'$ are congruent. Let $O'$ be the set of outcomes consistent with $f'$ when starting in $v$ in $P'$ and let $O$ be the set of outcomes on $P$ which are congruent to an outcome in $O'.$ As shown, for every outcome $o$ consistent with $f$ starting in $\gamma(v,1),$ there exists an outcome $o' \in O'$ with $o \equiv o'.$ Thus, for the set $r \equiv \textit{result}_{P',f'}(V_{d'},v)$ which is a subset of $O$ it holds that $r \trianglelefteq \textit{result}_{P,f}(V_{d},\gamma(v,1)).$ 
\end{proof}

\begin{lemma}
    For each memoryless strategy $f$ for the Even player in $P,$ $v \in V_d$ and $i \in [1,2],$ there exists a memoryless strategy $f'$ for the Even player in $P'$ such that $\textit{result}_{P,f}(V_d,v) \equiv \textit{result}_{P',f'}(V_{d'},(v,i)).$
    \label{DPsimsD}
\end{lemma}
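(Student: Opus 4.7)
The plan is to construct $f'$ explicitly as a memoryless strategy that mirrors $f$ on first coordinates, and then establish a two-way correspondence between $f$-consistent and $f'$-consistent partial plays. First, for $(u,j) \in V_0' \cap V_{d'}$ (so $j \in [1,2]$ and $u \in X_{\succcurlyeq c_j} \setminus \bag(d)$), I would set $f'((u,j)) \coloneqq (f(u), j^*)$ where
\begin{align*}
    j^* \coloneqq \begin{cases}
        0 & \text{if } f(u) \in V_S(\botS(d)), \\
        j & \text{else if } f(u) \in X_{\succcurlyeq c_j} \setminus \bag(d), \\
        3 - j & \text{otherwise.}
    \end{cases}
\end{align*}
At the remaining vertices of $V_0'$ the value of $f'$ can be chosen arbitrarily since they do not influence any play under $d'$. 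The well-definedness check in the third case is that $u \in V_B(\topS(c_j))$ together with $f(u) \notin V_B(\topS(c_j)) = X_{\succcurlyeq c_j}$ and $(u, f(u)) \in E(D)$ force $u \in V_S(\topS(c_j)) = X_{c_j}$, which is exactly the side-condition making the cross-over edge $((u,j),(f(u),3-j)) \in E(D')$ available.

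Next, I would exhibit a two-way correspondence between partial plays under $d$ and under $d'$. In the forward direction, any $f'$-consistent partial play $\pi'$ starting at $(v,i)$ projects under $\gamma(\cdot, 1)$ to an $f$-consistent partial play $\pi$ starting at $v$, since edges of $D'$ project onto edges of $D$ and $\gamma(f'((u,j)),1) = f(u)$. Conversely, given an $f$-consistent $\pi$, I would lift it to $\pi'$ inductively by setting $\pi'[0] \coloneqq (v,i)$ and at step $k$ either taking $\pi'[k+1] \coloneqq f'(\pi'[k])$ when $\pi[k] \in V_0$, or (when $\pi[k] \notin V_0$) choosing the second coordinate of $\pi'[k+1]$ by the same three-case rule used to define $j^*$. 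A valid index always exists: if $\pi[k] \in X_{\succcurlyeq c_{i_k}} \setminus X_{c_{i_k}}$, then the separation property forces $\pi[k+1] \in X_{\succcurlyeq c_{i_k}}$, ruling out the only case in which $3 - i_k$ would be required without the corresponding cross-over edge being available.

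Finally, since $\Omega'((w,j)) = \Omega(w)$ and a play leaves $V_d$ through $w \in \bag(d)$ precisely when its lift leaves $V_{d'}$ through $(w,0) \in X_{d'}$, paired plays have congruent outcomes; ranging over all of Odd's responses on both sides then yields $\textit{result}_{P,f}(V_d, v) \equiv \textit{result}_{P',f'}(V_{d'},(v,i))$. The main obstacle I expect is the well-definedness of the lift, which reduces entirely to the separation property of $\topS(c_j)$ constraining the out-neighbors of vertices strictly below $c_j$ to remain within $X_{\succcurlyeq c_j}$.
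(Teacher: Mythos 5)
Your construction is essentially the paper's own argument: you build a memoryless $f'$ that sends each copy $(u,j)$ to a copy of $f(u)$ whose index is chosen so that the corresponding edge of $E'$ exists (the paper does this via a preference order over the indices $0,1,2$, you via an explicit three-case rule, which amounts to the same thing), and then you establish the same two-way correspondence — projecting $f'$-consistent plays to $f$-consistent ones and lifting $f$-consistent plays using the fact that every edge of $D[X_{\succcurlyeq d}]$ has a lift from every copy of its tail — concluding congruence of outcomes and hence of results. This matches the paper's proof in structure and substance, so the proposal is correct and takes essentially the same route.
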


\begin{proof}
    Let $f$ be a memoryless strategy for the Even player in $P$ and let $v \in V_d.$ 
    We define a memoryless strategy $f'$ in $P'$ for a sequence of vertices $s$ and $w \in V(D')$ in the following way: 
    \begin{equation*}
        f'(s:w)\coloneqq \begin{cases}
            (f(\gamma(w,1)),0) & \text{if }(w, (f(\gamma(w,1)),0)) \in E' \\
            (f(\gamma(w,1)),1) & \text{if }(w, (f(\gamma(w,1)),1)) \in E' \\
            (f(\gamma(w,1)),2) & \text{else if }(w, (f(\gamma(w,1)),2)) \in E'
        \end{cases}
    \end{equation*}
    The strategy $f'$ is legal as, for every vertex $w \in V(D')$ and $(\gamma(w,1),u) \in E(D),$ there exists at least one $u' \in V(D')$ with $\gamma(u',1)=u$ and $(w,u') \in E(D').$ 
    Following $f$ and $f'$ respectively leads to congruent plays. For every partial play $\pi'$ in $P'$ starting in $(v,i)$ which is consistent with $f',$ the play $\pi$ defined by $\pi[j] = \gamma(\pi'[j],1),$ for all $j,$ is legal and consistent with $f,$ as every edge in $D'$ has an equivalent edge in $D$ and every move from a  vertex in $V_0'$ dictated by $f'$ is identical to the move in $f$ because every move in $f'$ is defined through $f.$ The outcome of these plays is also congruent as $\Omega(\pi[j]) = \Omega(\pi'[j])$ and the last vertices in $\pi$ and $\pi'$ are congruent. Equivalently, for every partial play $\pi$ in $D$ which is consistent with $f,$ we define the partial play $\pi'$ recursively through $\pi'[0] = (v,i)$ and, for $k \in [1, |\pi|-2],$ $j \in [0,2]$ must be chosen such that $(\pi'[k-1],(\pi[k],j)) \in E(D').$ Such a $j$ always exists since for every edge $(u,w) \in E(D)$ and $(u,m) \in V'$ there exists $n$ with $((u,m),(w,n)) \in E(D').$ Furthermore, the play $\pi'$ is consistent with $f',$ as for every $w \in V_0'$ the successor in $\pi'$ is chosen according to the definition of $f',$ and the outcome of $\pi'$ is congruent to the outcome of $\pi$ as $\Omega(\pi[k]) = \Omega(\pi'[k])$ and the last vertices in $\pi$ and $\pi'$ are congruent. Since for every partial play consistent with $f$ or $f',$ there exists a congruent partial play for the other strategy, it follows that $\textit{result}_{P,f}(V_d,v) \equiv \textit{result}_{P',f'}(V_{d'},v).$
\end{proof}

\begin{lemma}
    $\textit{Frontier}_{P', R_{P', d', opt}}(d') \equiv \textit{Frontier}_{P, R_{P, d, opt}}(d).$
    \label{DPFrontAnalogDFront}
\end{lemma}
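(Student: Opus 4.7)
The plan is to reduce the claim to the congruence of Even-optimal results at each vertex, and then chain \cref{DsimsDP} and \cref{DPsimsD} to show that these optimal results agree under $\equiv$ across $P$ and $P'$.

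The first step, and the main obstacle, is to record a structural observation that makes the congruence compatible with the ordering $\trianglelefteq$: exits from $V_d$ in $P$ land in $X_d = \bag(d)$, while exits from $V_{d'}$ in $P'$ land in $X_{d'} = \{(u,0) \mid u \in V_S(\botS(d))\}$. Since the index-$0$ vertices of $V'$ are precisely $\bag(d)$, the congruence $\equiv$ restricts to a bijection on exit vertices, and because priorities are preserved, it preserves $\trianglelefteq$ and $\triangleleft$ on results whose outcomes live at these exits. Without this order-preservation, strict domination could not be transported between the two games and the optimality arguments would collapse.

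For the inclusion of $P$-frontier elements into $P'$-frontier elements, I take $(v, r) \in \textit{Frontier}_{P, R_{P, d, opt}}(d)$ witnessed by an Even-optimal strategy $f$, pick any $(v, i) \in V_{d'}$ (such an $i \in \{1,2\}$ exists since $v \in V_d \subseteq (X_{\succcurlyeq c_1} \cup X_{\succcurlyeq c_2}) \setminus \bag(d)$), and invoke \cref{DPsimsD} to obtain $f'$ with $\textit{result}_{P', f'}(V_{d'}, (v,i)) \equiv r$. To verify that $f'$ is Even-optimal in $P'$, I suppose for contradiction that some $f'^*$ strictly dominates it; applying \cref{DsimsDP} to $f'^*$ yields a $P$-strategy $f^*$ and a set $r^* \equiv \textit{result}_{P', f'^*}(V_{d'}, (v,i))$ with $r^* \trianglelefteq \textit{result}_{P, f^*}(V_d, v)$. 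The order-preservation observation gives $r^* \triangleright r$ in $P$, hence $\textit{result}_{P, f^*}(V_d, v) \triangleright r$, contradicting optimality of $f$. So $((v, i), \textit{result}_{P', f'}(V_{d'}, (v,i)))$ is a $P'$-frontier element congruent to $(v, r)$.

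For the reverse inclusion I start from $(v', r') \in \textit{Frontier}_{P', R_{P', d', opt}}(d')$ realised by an Even-optimal $f'$ and set $v = \gamma(v', 1) \in V_d$. \Cref{DsimsDP} delivers $f$ in $P$ and $r \equiv r'$ with $r \trianglelefteq \textit{result}_{P, f}(V_d, v)$. Let $f^*$ be Even-optimal in $P$ from $v$, so $\textit{result}_{P, f^*}(V_d, v) \trianglerighteq r$. If the inequality were strict, \cref{DPsimsD} applied to $f^*$ would yield a $P'$-strategy whose result, by the order-preservation observation, strictly dominates $r'$, contradicting optimality of $f'$. Hence $\textit{result}_{P, f^*}(V_d, v) = r \equiv r'$, and $(v, \textit{result}_{P, f^*}(V_d, v))$ is a $P$-frontier element congruent to $(v', r')$. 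Beyond the order-preservation observation, every remaining step is an essentially mechanical chase through the definitions of $R_{\bullet,\bullet,opt}$, $\textit{Frontier}$, and $\equiv$.
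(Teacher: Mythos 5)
Your proof is correct and follows essentially the same route as the paper's: both inclusions are obtained by chaining \cref{DPsimsD} and \cref{DsimsDP} with an optimality-by-contradiction argument, your ``order-preservation'' observation merely making explicit what the paper uses implicitly when transporting strict domination across $\equiv$. The only wrinkle is that, since $\trianglelefteq$ is a partial order, the Even-optimal $f^*$ in your reverse direction should be chosen to dominate the strategy $f$ delivered by \cref{DsimsDP} (possible because only finitely many results exist), rather than being an arbitrary optimal strategy.
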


\begin{proof}

if for all $r' \in \textit{Frontier}_{P'}(v'),$ there exists $r \in \textit{Frontier}_{P}(v)$ with $\gamma(r,1) = \gamma(\gamma(r',1),1)$ and $\gamma(r,2) \equiv \gamma(r',2)$ and for all $r \in \textit{Frontier}_{P}(v),$ there exists $r' \in \textit{Frontier}_{P'}(v')$ with $\gamma(r,1) = \gamma(\gamma(r',1),1)$ and $\gamma(r,2) \equiv \gamma(r',2).$

    We know that for each $(v, r) \in \textit{Frontier}_{P, R_{P, d, opt}}(d)$ there exists $(v', r') \in \textit{Frontier}_{P'}(d')$ with $v = \gamma(v',1)$ and $r \equiv r'$ from \cref{DPsimsD}. We also know that $(v', r') \in \textit{Frontier}_{P', R_{P', d', opt}}(d')$ because any tuple $(v',r'') \in \textit{Frontier}_{P', R_{P', d', opt}}(d')$ with $r' \triangleleft r''$ would together with \cref{DsimsDP} imply that there exists $(v, r''') \in \textit{Frontier}_{P}(d)$ with $r \triangleleft r'''$ which would mean that $(v, r) \notin \textit{Frontier}_{P, R_{P, d, opt}}(d).$ 
    Since for every $(v', r') \in \textit{Frontier}_{P', R_{P', d', opt}}(d')$ there exists by definition no $(v',r'') \in \textit{Frontier}_{P'}(d')$ with $r' \triangleleft r'',$ we can use \cref{DsimsDP} to follow that there exists $(v,r) \in \textit{Frontier}_{P}(d)$ such that $v = \gamma(v',1)$ and $r \equiv r'.$ And similar to the argument above we can follow from $\cref{DPsimsD}$ that $(v,r) \in \textit{Frontier}_{P, R_{P, d, opt}}(d)$ proving the lemma.
\end{proof}}

\begin{theorem}
    For a parity game $P=(V,E,W_0,\Omega),$ a nice $\Separations$-DAG $(T,\sigma)$ of $D=(V,E)$ and $d \in V(T)$ with $\outN{T}{d} = \{c_1,c_2\},$ from $\textit{Frontier}_{P, R_{P, c_1, opt}}(c_1)$ and $\textit{Frontier}_{P, R_{P, c_2, opt}}(c_2)$ the set $\textit{Frontier}_{P, R_{P, d, opt}}(d)$ can be calculated in $O(|V(D)|^{u(k)})$ for a function $u$ where $k$ is the width of $(T,\sigma).$
    \label{FrontInTime}
\end{theorem}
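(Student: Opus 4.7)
The plan is to lift the two given frontiers into the auxiliary parity game $P'$, propagate them upward along the DAG decomposition $(T',(X_t)_{t \in V(T')})$ via \cref{berwCompFront}, and then convert the resulting frontier at $d'$ back to one at $d$ using the congruence provided by \cref{DPFrontAnalogDFront}.

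First I would build frontiers $\textit{Frontier}_{P', R'_i}(c_{i,\succcurlyeq}')$ from the given $\textit{Frontier}_{P, R_{P, c_i, opt}}(c_i)$ for $i \in \{1,2\}$. Since the subgraph of $D$ below $c_i$ is carried verbatim into $D'$ (with fixed index $i$ outside of $\bag(d)$ and index $0$ inside), each tuple in the original frontier corresponds to exactly one congruent lifted tuple, and the arguments used inside \cref{DsimsDP,DPsimsD} apply locally to establish that this lifting preserves both outcomes and optimality. Next, I would invoke \cref{berwCompFront} a constant number of times along $(T',(X_t)_{t \in V(T')})$, processing the chain $c_{i,\succcurlyeq}' \to c_i' \to d_{\bot}' \to d'$. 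Every bag occurring along the way has size at most $2k$, and $|V(D')| \leq 2|V(D)|$, so each invocation costs $O(|V(D)|^{h(2k)})$. This yields $\textit{Frontier}_{P', R'_{d'}}(d')$ for the set $R'_{d'}$ of strategies consistent with the lifted children frontiers; pruning dominated tuples in a straightforward post-processing pass produces $\textit{Frontier}_{P', R_{P', d', opt}}(d')$, which by \cref{DPFrontAnalogDFront} is congruent to the target $\textit{Frontier}_{P, R_{P, d, opt}}(d)$. Unfolding the congruence is then a linear scan over the entries.

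The main obstacle I expect is justifying the pruning step: I need to argue that the set $R'_{d'}$ supplied by \cref{berwCompFront} contains all strategies in $R_{P', d', opt}$ (so that no optimal tuple is ever discarded) and that the restriction to optimal strategies commutes with congruence. This can be handled by combining \cref{worseStratsDontHelp}, which forbids a globally optimal strategy from being assembled out of locally dominated pieces, with \cref{DsimsDP,DPsimsD}, which transfer optimality between $P$ and $P'$ in both directions. Summing the constantly many applications of \cref{berwCompFront} together with the polynomial-time pruning and congruence conversion yields a total running time of $O(|V(D)|^{u(k)})$ for a suitable function $u$ depending only on $h$.
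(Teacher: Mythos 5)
Your overall route is the paper's: transplant the children's frontiers into the auxiliary game $P'$ and its decomposition $(T',(X_t)_{t\in V(T')})$, run \cref{berwCompFront} for the merge up to $d'$, prune dominated tuples via \cref{worseStratsDontHelp} to pass from the consistency set $R_{d'}$ down to $R_{P',d',opt}$, and translate back through the congruence of \cref{DPFrontAnalogDFront}, with \cref{DsimsDP,DPsimsD} transferring optimality between $P$ and $P'$.

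There is, however, one step that would fail as written. The lifted frontiers do not live at the sinks $c_{i,\succcurlyeq}'$: for a sink, $V_{c_{i,\succcurlyeq}'} = X_{\succcurlyeq c_{i,\succcurlyeq}'} \setminus X_{c_{i,\succcurlyeq}'} = \emptyset$, so its frontier carries no information. The given frontiers correspond to $c_1'$ and $c_2'$, whose cones $V_{c_i'}$ are exactly the re-indexed copies of $V_{c_i}$; the isomorphism $v' \mapsto \gamma(v',1)$ is what yields $\textit{Frontier}_{P',R_{P',c_i',opt}}(c_i') \equiv \textit{Frontier}_{P,R_{P,c_i,opt}}(c_i)$, and this transplantation replaces any computation below $c_i'$. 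Consequently the first link of your chain, an invocation of \cref{berwCompFront} from $c_{i,\succcurlyeq}'$ to $c_i'$, is both unnecessary and outside the claimed bound: the exponent in \cref{berwCompFront} depends on the maximum bag size of the node \emph{and its children}, and $|X_{c_{i,\succcurlyeq}'}|$ can be $\Theta(|V(D)|)$ --- the decomposition $(T',(X_t)_{t\in V(T')})$ is not of bounded width, only the bags $X_{d'}$, $X_{d_{\bot}'}$, $X_{c_1'}$, $X_{c_2'}$ are bounded (by roughly $2k$). So your claim that every bag along the chain has size at most $2k$ is false precisely for that link, and the paper avoids it by applying \cref{berwCompFront} only above $c_1'$ and $c_2'$. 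A smaller omission: \cref{berwCompFront} needs a \emph{nice} DAG decomposition, so one must first pass $(T',(X_t)_{t\in V(T')})$ through the construction of \cite[Theorem~24]{BERWANGER2012900}. With the lifted frontiers attached at $c_1'$ and $c_2'$ and the computation confined to the bounded-width part above them, the remainder of your argument (pruning justified by \cref{worseStratsDontHelp}, then unfolding the congruence) coincides with the paper's proof.
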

\ifshort{
\begin{proof}[Proofsketch]
    The frontiers for $c_1'$ and $c_2'$ in the DAG decomposition $(T',(X_t)_{t \in V(T)})$ are the same as the frontiers $P$ has for $c_1$ and $c_2$ as their respective subgraphs are isomorphic. Given this, we can use $\textit{Frontier}_{P, R_{P, c_1, opt}}(c_1)$ and $\textit{Frontier}_{P, R_{P, c_2, opt}}(c_2)$ to calculate $\textit{Frontier}_{P', R_{P', d', opt}}(d')$ using \cref{berwCompFront}. Following this we can construct $\textit{Frontier}_{P, R_{P, d, opt}}(d)$ as it is directly analogue to $\textit{Frontier}_{P', R_{P', d', opt}}(d')$.
\end{proof}}{}\else{}\fi
\appendixproof{FrontInTime}{
 \begin{proof}
    The frontiers for $c_1'$ and $c_2'$ in the DAG decomposition $(T',(X_t)_{t \in V(T)})$ are the same as the frontiers $P$ has for $c_1$ and $c_2$ because the subgraphs of $D'$ below $c_1'$ and $c_2'$ are isomorphic to the subgraphs of $D$ below $c_1$ and $c_2$ with the isomorphism $h(v')=\gamma(v',1)$ for $v' \in X_{\succcurlyeq c_i'}$ for $i \in \{1,2\}.$ Given this, we can derive $\textit{Frontier}_{P', R_{P', c_1', opt}}(c_1')$ and $\textit{Frontier}_{P', R_{P', c_2', opt}}(c_2')$ from $\textit{Frontier}_{P, R_{P, c_1, opt}}(c_1)$ and $\textit{Frontier}_{P, R_{P, c_2, opt}}(c_2).$
    We use the construction in \cite[Theorem 24]{BERWANGER2012900} to obtain a nice DAG decomposition from $(T', (X_t)_{t \in V(T)}).$
    We then use \cref{berwCompFront} to calculate $\textit{Frontier}_{P', R_{d'}}(d').$ Since the sizes of $X_{c_1'},$ $X_{c_2'},$ $X_{d'}$ and $X_{d_{\bot}'}$ are all bounded by at most twice the width of $(T,\sigma),$ this can be done in $O(|V(D)|^{u(k)})$ where $k$ is the width of $(T,\sigma).$
    By \cref{worseStratsDontHelp} we know that $R_{P', d', opt} \subseteq R_{d'} \subseteq R_{P', d', all}.$ Thus, we construct $\textit{Frontier}_{P', R_{P', d', opt}}(d')$ by removing all tuples $(v, r)$ from $\textit{Frontier}_{P, R_{d'}}(d')$ for which there exists $(v,r') \in \textit{Frontier}_{P', R_{d'}}(d')$ with $r \triangleleft r'.$ The computation time for doing this by brute force is in $O(|\textit{Frontier}_{P', R_{d'}}(d')| \cdot k) \subset O(|V|^{u(k)}).$ 
    Since $\textit{Frontier}_{P', R_{P', d', opt}}(d') \equiv \textit{Frontier}_{P, R_{P, d, opt}}(d)$ by \cref{DPFrontAnalogDFront} we can construct $\textit{Frontier}_{P, R_{P, d, opt}}(d)$ from $\textit{Frontier}_{P', R_{P', d', opt}}(d')$ in time linear to the size of $\textit{Frontier}_{P', R_{P', d', opt}}(d').$
 \end{proof}}

\solveParity*

\begin{proof}
    Having shown that for any type of successors we can calculate $\textit{Frontier}_{P, R_{P, d, opt}}(d)$ of the predecessor $d$ in $O(|V(D)|^{u(k)}),$ we can calculate the frontier $\textit{Frontier}_{P, R_{P, s, opt}}(s)$ for the source $s$ in $O((|V(D)|^{u(k)})).$ Since $V_s = V$ there are no finite plays in $V_s$ meaning that the only possible results in $\textit{Frontier}_{P, R_{P, s, opt}}(s)$ are a win for the Odd player or a win for the Even player. Since $\textit{Frontier}_{P, R_{P, s, opt}}(s)$ only considers optimal strategies for the Even player $\textit{Frontier}_{P, R_{P, s, opt}}(s)$ contains for each $v \in V$ the tuple $(v, \textit{winEven})$ if there exists a strategy for the Even player which wins the game when starting in $v$ and contains $(v,\textit{winOdd})$ otherwise. Thus, we can directly decide the winner of the parity game on $D$ for each starting vertex and subsequently decide the winner of the parity game $P$.
\end{proof}

\bibliographystyle{alpha}
\bibliography{bibfile}


    \ifshort{}
    \newpage
    \appendix
    \appendixProofText
    \fi{}

\end{document}